\newcommand{\m}[1]{
\ifdefequal{#1}{1}
{\mathbbm{#1}}
{\mathbb{#1}}
}
\newcommand{\q}[1]{\mathcal{#1}}
\newcommand{\mr}[1]{\mathrm{#1}}
\newcommand{\ds}{\displaystyle}
\newcommand{\be}{\begin{gather}}
\newcommand{\ee}{\end{gather}}
\newcommand{\ba}{\begin{align*}}
\newcommand{\ea}{\end{align*}}
\newcommand{\e}{\varepsilon}
\newcommand{\tendf}{\rightharpoonup}
\newcommand{\imp}{\Longrightarrow}
\newcommand{\loc}{\mathrm{loc}}
\renewcommand{\le}{\leqslant}
\renewcommand{\ge}{\geqslant}
\numberwithin{equation}{section}
\theoremstyle{plain}
\newtheorem{thm}{Theorem}[section]
\newtheorem*{thm*}{Theorem}
\newtheorem{prop}[thm]{Proposition}
\newtheorem{cor}[thm]{Corollary}
\newtheorem{lem}[thm]{Lemma}
\theoremstyle{definition}
\theoremstyle{remark}
\newtheorem{nb}{Remark}
\newtheorem{claim}[thm]{Claim}
\begin{document}

\title{Soliton resolution for equivariant wave maps to the sphere}

\author{Raphaël Côte}

\subjclass[2010]{35L05,35L71}

\keywords{wave maps, equivariant, classification, profile, soliton resolution}

\thanks{The author gratefully acknowledges the support of the European Research Council under the project ``Blow up, Dispersion and Solitons''.}

\begin{abstract}
We consider finite energy corotationnal wave maps with target manifold $\m S^2$. We prove that for a sequence of times, they decompose as a sum of decoupled harmonic maps in the light cone, and a smooth wave map (in the blow case) or a linear scattering term (in the global case), up to an error which tends to 0 in the energy space.
\end{abstract}

\maketitle

\section{Introduction}

\subsection{Statement of the main results}

Let $(M,g)$ be a Riemannian manifold, and $\m R^{1+d}$ be endowed with the Minkowski metric $\eta = \mathrm{diag}(-1, 1,\dots, 1)$. Wave maps $U: (\m R^{1+d}, \eta) \to (M, g)$ are defined formally as critical points of the Lagrangian 
 \begin{align*}
 \q L(U, \partial U) = \frac{1}{2} \int_{\m R^{1+d}} \eta^{\alpha \beta} \langle \partial_{\alpha} U,\partial_{\beta}U  \rangle_g dxdt.
 \end{align*}
In local coordinates, they satisfy the Euler-Lagrange equation
\begin{gather} \label{eq:wm}
 \begin{cases}
\Box U^k =  - \eta^{\alpha \beta} \Gamma^k_{i j}(U) \partial_{\alpha} U^i \partial_{\beta} U^j \\
 (U, \partial_t U)\vert_{t=0}=(U_0, U_1),
\end{cases}
\end{gather}
where $\Gamma_{i j}^k$ are the Christoffel symbols on $TM$. 

We refer to the review article \cite{Kri08} and the reference therein for recent developments regarding general wave maps.

We consider the case where $d=2$ and $M$ is a 2 dimensional surface of revolution with metric
\[ ds^2 = d\rho^2 + g(\rho)^2 d\theta, \]
where $(\rho, \theta)$ are the polar coordinates on $M$, and $g \in \q C^3(\m R)$. 

We assume that $U$ has corotationnal equivariant symmetry, that is, denoting $(r,\omega)$ the polar coordinates on $\m R^2$, it takes the form
\[ U(t,r,\omega) = (\psi(t,r),\omega). \]
for some function $\psi$. System \eqref{eq:wm} then simplifies to the following equation on $\psi$ :
\begin{gather}  \label{wm} \tag{WM}
\begin{cases}
\ds \partial_{tt} \psi - \partial_{rr} \psi - \frac{1}{r} \partial_r \psi + \frac{f(\psi)}{r^2} = 0\\
(\psi, \partial_t \psi)\vert_{t=0} = (\psi_0, \psi_1)
\end{cases} \quad \text{where } f=gg'.
\end{gather}
We say that such a solution $\vec \psi = (\psi, \partial_t \psi)$ to \eqref{wm} is a wave map.

We define the energy space $\q H \times L^2$ and similarly the Hilbert space $H \times L^2$ as follows: given a couple of function $\vec \phi = (\phi_{0}, \phi_1)$, and for $0 \le r_1 < r_2 \le \infty$,
\begin{align*}
E(\vec \phi; r_1,r_2) & := \int_{r_1}^{r_2} \left( |\phi_1(t,r)|^2 +  |\partial_r \phi_0(t,r)|^2 + \frac{|g(\phi_0(t,r))|^2}{r^2} \right) rdr, \\
\| \phi_0 \|_{H([r_1,r_2])}^2 & : = \int_{r_1}^{r_2} \left( |\partial_r \phi_0(r)|^2 + \frac{|\phi_0(r)|^2}{r^2} \right) rdr, \\
\| \vec \phi \|_{H \times L^2([r_1,r_2])} & = \int_{r_1}^{r_2} \left( | \phi_1(r)|^2 +  |\partial_r \phi_0(r)|^2 + \frac{|\phi_0(r)|^2}{r^2} \right) rdr.
\end{align*}
We omit $r_1,r_2$ in the case $r_1=0$ and $r_2=\infty$: the energy is  $E(\vec \phi)  := E(\vec \phi; 0 ,\infty)$, and $\q H \times L^2 = \{ \vec \phi \mid E(\vec \phi) < +\infty \}$.

If $\vec \psi = (\psi(t), \partial_t \psi(t))$ is a finite energy wave map, then at least formally its energy is preserved: for all $t$ where defined,
\begin{gather} \label{eq:energy=const}
E(\vec \psi(t)) = E(\vec \psi(0)).
\end{gather}
\eqref{wm} is energy critical in the following sense. Consider the scaling (for $\lambda >0$)
\[  \Lambda[\lambda]\vec \phi(t,r) := \left( \phi_0 \left( \frac{t}{\lambda}, \frac{r}{\lambda} \right), \frac{1}{\lambda} \phi_1 \left( \frac{t}{\lambda}, \frac{r}{\lambda} \right) \right). \]
Then
$\vec \psi$ is a wave map if and only if $  \Lambda[\lambda]\vec \phi$ is a wave map, and the energy is scaling invariant:
\[ E(\vec \psi) = E(  \Lambda[\lambda]\vec \phi). \]
Notice that the $H \times L^2$ norm is also scaling invariant.

Recall that if $\phi \in \q H$, then $\phi$ continuous and bounded, and has well defined limits at $0$ and $+\infty$, which cancel $g$: we denote them $\phi(0)$ and $\phi(\infty)$. If $\vec \phi$ is a wave map, these limits do not depend on time. 

This motivates the introduction of the set  of points where $g$ vanishes
\[ \q V := \{ \ell \in \m R \mid g(\ell) =0 \}. \]
Also, let
\[ \ds G(x) := \int_0^x |g(y)| dy. \]

We recall the local well-posedness result in the energy space, due to Shatah and Tahvildar-Zadeh.
\begin{thm*}[{\cite{STZ94}}]
Let $(\psi_0, \psi_1) \in \q H \times L^2$. Then there exists a unique wave map  $\vec\psi = (\psi, \partial_t \psi) \in \q C(I, \q H \times L^2)$ solution to \eqref{wm}, defined on a maximal interval $I =: (T^-(\vec \psi), T^+(\vec \psi))$, and which preserves the energy \eqref{eq:energy=const}.
\end{thm*}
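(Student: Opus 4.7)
The plan is to follow the standard scheme for local well-posedness of an energy-critical semilinear wave equation: rewrite \eqref{wm} as a fixed-point problem via Duhamel's formula for the 2D radial wave operator $\partial_{tt} - \partial_{rr} - \frac{1}{r}\partial_r$, run a contraction in a suitable Strichartz space, derive a blow-up criterion, and finally propagate the energy identity \eqref{eq:energy=const} by approximation.

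\textbf{Step 1: linear theory.} I would begin by setting up the linear equation $\partial_{tt}\phi - \partial_{rr}\phi - \frac{1}{r}\partial_r\phi = F$ with data in $\q H \times L^2$. Using the Hankel transform (or, equivalently, interpreting radial 2D functions as functions on $\m R^2$ and invoking the standard wave propagator), one obtains an energy identity together with Strichartz estimates of the form $\|\phi\|_{L^q_t L^p_r} \le C\|(\phi_0,\phi_1)\|_{H \times L^2} + C\|F\|_{L^{\tilde q'}_t L^{\tilde p'}_r}$ for admissible pairs; for the radial setting one can even improve these (Sterbenz-type estimates) but the plain 2D wave Strichartz will suffice. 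In parallel, the radial Hardy inequality $\|\phi/r\|_{L^2(rdr)} \le C\|\partial_r \phi\|_{L^2(rdr)}$ ensures that the norm on $H$ in the excerpt is equivalent to the Dirichlet norm, so $\q H$ sits inside a scaling-invariant Banach space for which Strichartz tools are available.

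\textbf{Step 2: nonlinear estimate and fixed point.} The crux is to control $F(\psi) = -f(\psi)/r^2$ in the Duhamel term. Since $f = gg' \in \q C^2$, one can write $f(\psi) = f(\psi(0)) + O(|\psi - \psi(0)|)$ near $r = 0$, and because $\vec\psi \in \q H \times L^2$ forces $g(\psi(0)) = 0$ and hence $f(\psi(0)) = 0$, we get the pointwise bound $|f(\psi)/r^2| \lesssim |\psi - \psi(0)|/r^2$ near the origin, which is in turn controlled via Hardy by $\|\partial_r(\psi-\psi(0))\|_{L^2(rdr)}/r = \|\partial_r\psi\|/r$. A symmetric argument handles the region $r \to \infty$. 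This yields a Lipschitz estimate for the Duhamel map on balls in a Strichartz space on a short interval $[-T_0, T_0]$ whose length depends on the data, and a standard contraction then produces a unique solution $\vec\psi \in \q C([-T_0,T_0], \q H \times L^2)$. Uniqueness in the full energy class follows by running the estimate on the difference of two solutions.

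\textbf{Step 3: maximal interval and energy conservation.} Iterating the local construction gives a maximal interval $I = (T^-(\vec\psi), T^+(\vec\psi))$ on which $\vec\psi \in \q C(I, \q H \times L^2)$; the blow-up alternative asserts that if $T^+ < +\infty$ then the relevant Strichartz norm blows up. For energy conservation, one approximates $(\psi_0,\psi_1)$ by a sequence of smooth, compactly supported equivariant data, for which the classical theory (finite speed of propagation reduces everything to a smooth 1D hyperbolic problem away from $r=0$, the singular term being absorbed into a smooth nonlinearity in the variable $u = \psi/r$ or via the extension to higher dimensions) produces smooth solutions satisfying \eqref{eq:energy=const} by direct multiplication of \eqref{wm} by $\partial_t \psi$ and integration. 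Continuity of the flow with respect to the data, inherited from the contraction, then passes \eqref{eq:energy=const} to the limit.

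\textbf{Main obstacle.} The delicate point is the nonlinear estimate near $r = 0$: the term $f(\psi)/r^2$ is genuinely critical with respect to the scaling, and one has to exploit the fact that $g$ vanishes at the boundary value $\psi(0)$ (which in turn depends on the initial data). Controlling this uniformly in time in a scaling-invariant norm -- so that the contraction closes on a non-trivial interval -- is where the specific structure of the equivariant wave map (and the Hardy inequality for radial functions) is essential; without this structure, energy-critical local well-posedness in the full energy space would be out of reach by this direct method.
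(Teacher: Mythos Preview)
The paper does not prove this statement; it is quoted from \cite{STZ94}, so there is no in-paper proof to compare against. That said, your outline has a genuine gap at the linear level.

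The Hardy inequality you invoke in Step~1,
\[ \| \phi/r \|_{L^2(rdr)} \le C \| \partial_r \phi \|_{L^2(rdr)}, \]
is \emph{false} in two dimensions: take $\phi$ equal to $1$ on $[1,R]$ with smooth cutoffs; the left side is of order $\sqrt{\ln R}$ while the right side stays bounded. Hence the $H$ norm is \emph{not} equivalent to the Dirichlet norm, and you cannot set up Strichartz for the bare operator $\partial_{tt} - \partial_{rr} - \tfrac{1}{r}\partial_r$ in the space $H$. The same failure kills Step~2: your bound $|f(\psi)/r^2| \lesssim |\psi - \psi(0)|/r^2$ is only \emph{linear} in $\psi - \psi(0)$, so it is the same size as the solution itself and can never be made small in a scaling-invariant norm --- no contraction will close.

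The fix, if one wants a fixed-point proof, is exactly what the paper uses later (Section~2.2 and \cite{CKM08}): write $\ell = \psi(0) \in \q V$, expand $f(\psi) = g'(\ell)^2(\psi-\ell) + O\big((\psi-\ell)^2\big)$, and absorb the linear part into the operator to obtain \eqref{LWl}. The transformation $\q T: \phi \mapsto \phi/r^{g'(\ell)}$ turns \eqref{LWl} into the radial wave equation in dimension $2+2g'(\ell) \ge 4$, where Hardy \emph{does} hold and Strichartz estimates are available; the remaining nonlinearity is now quadratic and genuinely perturbative. Note, however, that this route needs $g'(\ell)$ to be a positive integer (assumption~(A3) or (A3')), whereas the theorem as stated is for general $g \in \q C^3$. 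The original argument in \cite{STZ94} does not go through Strichartz at all; it exploits the one-dimensional structure of the equivariant problem together with a priori energy estimates and approximation by smooth solutions.
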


The wave map equation \eqref{wm} has been intensively studied as a model for geometric wave equations. It has been long understood that the geometry of the target $M$, i.e. the metric $g$, plays a crucial role in the long time behavior of wave maps. Let us mention the result by Struwe \cite{Str03}: a wave map that blows up in finite time must bubble up a harmonic map at blow up time. In particular, if $M$ does not admit non constant harmonic maps, then any wave map is global in time.

\medskip

Actual examples of wave maps blowing up in finite time were \emph{constructed} by Rodnianski and Sterbenz \cite{RS10} and Raphaël and Rodnianski \cite{RR12} (as a perturbation of the self similar regime), and by Krieger, Schlag and Tataru \cite{KST08} (with prescribed, polynomial blow up rate).

On a different side, together with Kenig, Lawrie and Schlag \cite{CKLS13a,CKLS13b}, we \emph{classified} the asymptotic behavior of wave maps with energy less than 3 times the energy of a harmonic map, for large time or near blow up time.

\medskip

Our goal in this paper is to obtain a similar classification for wave maps of arbitrarily large energy, that is to relax the bound on the energy. We provide a description of a wave map into decoupled profiles, a so called soliton resolution.

It turns out that these profiles are harmonic maps and linear scattering terms. Recall that a harmonic map is a solution $Q$ of finite energy of
\[ \partial_{rr} Q  + \frac{1}{r} \partial_r Q = \frac{f(Q)}{r^2}. \]
(Hence $(Q,0)$ is a finite energy stationary wave map). From \cite{C05}, they are classified as follows: a non constant harmonic map is monotonic, satisfies one of the ODEs
\[ r \partial_r Q = g(Q) \quad \text{or} \quad r \partial_r Q = -g(Q), \]
and joins two consecutive points of $\q V$, that is for some $\ell, m \in \q V$, $\ell < m$,
\[ \{ Q(0), Q(\infty) \} = \{ \ell, m \}  \quad \text{and} \quad \q V \cap (\ell,m) = \varnothing. \]
It has energy $E(Q):=E(Q,0)=2(G(m) - G(\ell))$. In particular if $\q V =1$, there exists no non constant harmonic map (if $\q V$ is empty, there is no finite energy map).

On the other hand, given $\ell \in \q V$, we define the linearized wave map flow around $\ell$:
\begin{gather} \label{LWl} \tag{LW$_\ell$}
\partial_{tt} \phi - \partial_{rr} \phi -\frac{1}{r} \partial_r \phi + \frac{g'(\ell)^2}{r^2} \phi = 0.
\end{gather}
Solutions to this linear wave equation preserve the following $H \times L^2$ related quantity
\[ \| \vec \phi(t) \|_{H_\ell \times L^2}^2 : = \int \left( |\partial_t \phi(t,r)|^2 + | \partial_r \phi(t,r)|^2 + \frac{g'(\ell)^2 \phi(t,r)^2}{r^2} \right) rdr = \| \vec \phi(0) \|_{H_\ell \times L^2}^2. \]

We now state the main result of this paper. For this, we make the following assumptions on the metric $g$:
\begin{enumerate}
\item[(A1)] $G(x) \to \pm\infty$ as $x \to \pm \infty$. 
\item[(A2)] $\q V$ is discrete,
\item[(A3)] For all $\ell \in \q V$, $g'(\ell) \in \{ -1,1 \}$.
\end{enumerate}
Assumption (A1) prevents the formation of bubbles at infinity, and is a very natural assumption. (A2) is also a natural assumption of non degeneracy of $g$, which prevent a decomposition with harmonic maps of arbitrarily small energy. 

The physically relevant metrics $g$ are
\begin{enumerate}
\item $g(\rho) = \sin(\rho)$ (wave maps to the sphere $\m S^2$), and
\item $g(\rho) = 1-\rho^2$ (radial 4D Yang-Mills equation).
\end{enumerate}
Hence (A3) allows to handle wave maps to the sphere $\m S^2$; however, dealing with the radial 4D Yang-Mills equation requires to relax (A3) to
\begin{enumerate}
\item[(A3')] For all $\ell \in \q V$, $g'(\ell) \in \{ -2,-1,1,2 \}$.
\end{enumerate}
It should be noted that most of the results in this article hold under (A3') instead of (A3). One could even consider the natural condition $g'(\q V) \subset \m Z \setminus \{ 0 \}$,
which makes the linearized problem \eqref{LWl} around $\ell$ to be of wave type (in dimension $2 g'(\ell)+2$). However large $g'(\ell)$ raise technical issues for the Cauchy problem as noted in \cite[Theorem 2]{CKM08}, which is restricted to the case (A3'); if these issues could be dealt with, most results here hold under this last condition.

\begin{thm} \label{th1}
We make assumptions (A1)-(A2)-(A3).

Let $\vec \psi(t)$ be a finite energy wave map. Then there exist a sequence of time $t_n \uparrow T^+(\vec\psi)$, an integer $J \ge 0$, $J$ sequences of scales $\lambda_{J,n} \ll \cdots \ll \lambda_{2,n} \ll \lambda_{1,n}$ and $J$ harmonic maps $Q_1, \dots, Q_J$ such that 
\[ Q_J(0) = \psi(0), \quad Q_{j+1}(\infty) = Q_{j}(0) \quad \text{for } j=1, \dots, J-1, \]
and that the following holds.
\begin{enumerate}
\item If $T^+(\vec \psi) = +\infty$, denote $\ell = \psi(\infty)$. Then $\lambda_{1,n} \ll t_n$ and there exists a solution $\vec \phi_L(t) \in \q C(\m R, H \times L^2)$ to the linear wave equation \eqref{LWl} such that 
\begin{gather} \label{dec_g}
\vec \psi(t_n) =  \sum_{j=1}^J \left(Q_{j} \left(\cdot/\lambda_{j,n}\right) - Q_j(\infty), 0\right) + (\ell,0) + \vec \phi_L(t_n) +  \vec b_n,
\end{gather}
where $\vec b_n \to 0$ in $H \times L^2$ as $t \to \infty$ and $Q_1(\infty) = \ell$.
\item If $T^+(\vec \psi) < +\infty$, then $\lambda_{1,n} \ll T^+(\vec \psi)-t_n$ and there exists a function $\vec \phi \in \q H \times L^2$ of finite energy such that
\begin{gather}
\label{dec_b}
\vec \psi(t_n) =  \sum_{j=1}^J \left(Q_j \left(\cdot/\lambda_{j,n}\right) - Q_j(\infty) , 0\right) + \vec \phi +  \vec b_n,
\end{gather}
where $\vec b_n \to 0$ in $H \times L^2$ as $t \to \infty$, $\phi(0) =\lim_{t \uparrow T^+(\vec\psi)} \psi(t, T^+(\vec\psi)-t)$ and $Q_1(\infty) = \phi(0)$.
\end{enumerate}
\end{thm}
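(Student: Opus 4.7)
The plan is to combine a Struwe-type bubbling argument with a Bahouri--G\'erard profile decomposition adapted to the equivariant setting, and to close with a rigidity theorem in the spirit of the Duyckaerts--Kenig--Merle channels-of-energy method.

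First, I would select the sequence $t_n \uparrow T^+(\vec \psi)$ using a virial/flux monotonicity on the (backward) light cone based at $(T^+(\vec\psi), 0)$, so that along $t_n$ both the energy flux through the lateral boundary of this cone and an auxiliary virial quantity of the form $\int \partial_t \psi \cdot (r \partial_r \psi + \psi) \, r dr$ (against an appropriate cutoff) tend to zero. In the blow-up case, the small-data theory outside the singular cone together with finite propagation speed produces the regular profile $\vec \phi \in \q H \times L^2$ with $\phi(0) = \lim_{t \uparrow T^+(\vec\psi)} \psi(t, T^+(\vec\psi) - t)$; in the global case, the analogous argument on the cone $\{r \ge t\}$ yields the asymptotic linear profile $\vec\phi_L$ solving \eqref{LWl} around $\ell = \psi(\infty)$. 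The natural scale of the remaining inner part then automatically satisfies $\lambda_{1,n} \ll t_n$ or $\lambda_{1,n} \ll T^+(\vec\psi) - t_n$.

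Next, I would apply a profile decomposition to the interior data $\vec\psi(t_n) - (\ell, 0) - \vec\phi_L(t_n)$ (resp.\ $\vec\psi(t_n) - \vec\phi$), writing it in $H \times L^2$ as a superposition of profiles concentrating at pairwise orthogonal scales $\lambda_{j,n}$ plus a remainder which vanishes in a suitable dispersive norm. Via the Shatah--Tahvildar-Zadeh theory, each linear profile lifts to a nonlinear profile of \eqref{wm}; the vanishing of the flux and virial quantities propagates to each of these, so every concentrating nonlinear profile is \emph{non-radiative}, in the sense that it sends no energy into the exterior region $\{r \ge |t|\}$ in either time direction.

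The crux of the argument is then a rigidity statement: any non-radiative equivariant finite-energy solution of \eqref{wm} around a point $\ell_j \in \q V$ must be a stationary harmonic map. This step relies on exterior channels-of-energy lower bounds for the linear equation \eqref{LWl}; assumption (A3) ensures that this is a free wave equation in even spatial dimension $2 g'(\ell_j) + 2$, where explicit radiation-field formulas make the required exterior estimate available. The classification of \cite{C05} then identifies each nonlinear profile with a harmonic map $Q_j$ joining two consecutive points of $\q V$, and matching asymptotic values across adjacent scales forces $Q_{j+1}(\infty) = Q_j(0)$, $Q_J(0) = \psi(0)$, and $Q_1(\infty) = \ell$ or $\phi(0)$. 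Finiteness of $J$ then follows from a Pythagorean energy expansion, since each bubble carries at least the minimal harmonic-map energy $2\min(G(m) - G(\ell'))$ over consecutive pairs $\ell' < m$ in $\q V$.

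The main obstacle is the rigidity step: ruling out non-stationary non-radiative solutions. This requires a careful analysis of how the nonlinearity of \eqref{wm} feeds into the exterior linearized flow at each $\ell_j$, together with orthogonality arguments projecting away the finitely many discrete radial modes that contribute no flux at infinity in the \eqref{LWl} channel estimate. A related technical difficulty is bookkeeping: the individual summands $Q_j$ are not finite-energy objects, only the differences $Q_j - Q_j(\infty)$ are, so one must track the asymptotic constants with care to give meaning to the decompositions \eqref{dec_g}--\eqref{dec_b} in the energy space and verify that the Pythagorean expansion closes.
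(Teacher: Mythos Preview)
Your outline has a genuine gap at the second step. The Bahouri--G\'erard profile decomposition you invoke lives in $H \times L^2$, but the harmonic map bubbles $Q_j$ do \emph{not} belong to this space: a nonconstant harmonic map $Q$ has $Q(0) \neq Q(\infty)$, so $Q - Q(\infty)$ does not vanish at the origin and $\int_0^1 |Q(r) - Q(\infty)|^2 r^{-1}\,dr$ diverges. Hence no linear profile in your decomposition can converge after rescaling to a nonconstant harmonic map, and the lift to nonlinear profiles produces only wave maps with $\psi(0) = \psi(\infty) = \ell$, whose stationary states are constants. Your rigidity step would therefore conclude that every concentrating profile is trivial, and the bubbles would be lost. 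The paper flags exactly this obstruction in Section~1.2: ``the harmonic maps do never belong to $H \times L^2$. This means we must derive a procedure to extract them without relying on a linear profile decomposition.''

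The paper's substitute (Theorem~\ref{th:profiles}) is a direct compactness argument in the energy space $\q H$: at any scale where $|g(\psi_n)|$ exceeds a fixed threshold, the vanishing of the space-time kinetic energy forces local strong convergence to a nonconstant harmonic map (Proposition~\ref{prop:1prof}), and iterating downward in scale extracts the chain $Q_1, \dots, Q_J$ together with the matching conditions. This yields the decomposition with an error $\vec b_n$ small only in $L^\infty$. The channels-of-energy estimate under (A3) is then used not for a global non-radiative rigidity theorem, but in a much more localized way (Proposition~\ref{prop:Hconv}): between two consecutive scales $\lambda_{j,n}$ and $\lambda_{j-1,n}$ the error is an $L^\infty$-small perturbation of a single point $m = Q_j(\infty) \in \q V$, and one compares the nonlinear evolution with the linear flow (LW$_m$) to show that any residual $H$-energy there would escape the cone, contradicting the known convergence at the upper scale. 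The linear profile decomposition does appear in the paper, but only for the genuinely dispersive piece: the scattering result below the $L^\infty$ threshold (Theorem~\ref{th:scat}) and the construction of $\vec\phi_L$ outside the cone (Proposition~\ref{prop:scat_state}).
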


\begin{nb}
Notice that in the global case, $E(\vec \psi) =  \sum_{j=1}^J E(Q_j) + \| \vec \phi_L \|_{H_\ell \times L^2}^2$, 
and in the blow up case, $E(\vec \psi) =  \sum_{j=1}^J E(Q_j) + E(\vec \phi)$.
This gives a bound on $J$.

Also, if $T^+(\vec\psi) = +\infty$, then $J \ge \# (\q V \cap [\psi(0),\psi(\infty)])-1$. This last number can be made arbitrarily large when $\q V$ is infinite (as for $g=\sin$).
\end{nb}

\begin{nb}
The question whether the decomposition holds for all times and not merely for a sequence is open. However
there are some cases where it can be proved. For example, when the excess energy of $\vec \psi$ with respect to the energy to connect $\psi(0)$ to $\psi(\infty)$ is not enough to bubble more harmonic maps: that is
\[ E(\vec \psi) < 2 |G(\psi(\infty)) -G(\psi(0))| + 4 \delta, \]
where $\delta = \min_{\ell \in \q V \setminus \{ \psi(0), \psi(\infty) \} } \{ |G(\psi(\ell)) -G(\psi(0))|, |G(\psi(\ell) -G(\psi(\infty))| \}$.  We refer to \cite[Proof of Theorem 1.3]{CKLS13a} for a detailed argument in the case $J=1$.
\end{nb}

\begin{nb}
Many possibilities are left open regarding the behavior of the $\lambda_{j,n}$: for example, in the global case, one could have $\lambda_{J,n} \to 0$ (infinite time blow-up), or $\lambda_{J,n} \to +\infty$ (infinite time flattening). Although no such solutions were constructed for \eqref{wm}, let us refer to \cite{DK12} in the context of  the semilinear wave equation.
\end{nb}

Theorem \ref{th1} is an extension of \cite{CKLS13a,CKLS13b} where only one profile was allowed (i.e $J=1$) through the bound on the energy. It is in the spirit of the seminal papers by Duyckaerts, Kenig and Merle \cite{DKM11,DKM12a,DKM12b,DKM13} where large solutions of the radial energy critical (focusing) wave equation in 3D were described. Let us however observe that their analysis did not encompass type I blow up solutions (i.e when $\limsup_{t \uparrow T^+(\vec u)} \| \vec u (t) \|_{\dot H^1 \times L^2} = +\infty$). 
This phenomenon does not occur in the wave map case (mainly because the energy is coercive, even if it doesn't bound $H \times L^2$), and we give a description of any wave map, without any further assumption.

\subsection{Outline of the proof}

Let us mention two delicate issues. First, geometry has to be taken into account: the harmonic maps do never belong to $H \times L^2$. This means we must derive a procedure to extract them without relying on a linear profile decomposition, as for the wave equation. 

\medskip

Second, the linearized operator of the wave map flow at spatial infinity is of wave type in \emph{even} dimension: most of the delicate linear estimates available in the radial 3D case break down for wave maps, in particular the so-called ``energy channels''. When $g'(\ell)$ is odd, \eqref{LWl} corresponds to a wave equation in $\m R^d$ with $d \equiv 0 \mod 4$, and the linear estimate obtained in \cite{CKS13} in this case suffices to conclude. When $g'(\ell)$ is even, then $d \equiv 2 \mod 4$ and the desired linear estimate fails. This is the reason why we must restrict ourselves to (A3) instead of (A3').

\bigskip

The first step in the proof is to choose a sequence of time $t_n \to T^+(\vec \psi)$ on which the space-time kinetic energy inside the light cone vanishes. This is a reformulation that the averaged kinetic energy inside the light cone vanishes, which is a well known result, and is the content of Section 2.1. Section 2.2 focuses on various aspects of the profile decomposition in $H \times L^2$ to be used later in the paper.

\medskip

The second step, Section 3, is concerned with sequence of wave maps whose space-time kinetic energy vanishes, and shows, in Theorem \ref{th:profiles}, that up to a subsequence, one can construct a bubble decomposition i.e extract the harmonic maps. This decomposition holds up to an error which tends to 0 in $L^\infty$. Notice that this result does not make use of assumption (A3) or (A3'), but only (A1) and (A2).

\medskip

The bound on the error is insufficient to capture the linear scattering term for example, but it is enough to derive a sharp scattering theorem \emph{below the threshold in} $L^\infty$. As linear scattering is involved, we do need assumption (A3') here. This result has its own interest: let us state it here, and postpone the proof to Section 4.

\medskip
 
For $\ell \in \q V$, define $d_\ell$ as the distance of $\ell$ to the closest (distinct) element in $V$:
\[ d_\ell = \inf \{ |\ell - k | \mid k \in \q V \setminus \{ \ell \} \}. \]
$d_\ell>0$ due to assumption (A2).

\begin{thm} \label{th:scat}
Let $\ell \in \q V$ and assume (A1)-(A2), and
\[ g'(\ell) \in \{ -2,-1,1,2 \}. \]
Let $\vec \psi$ be a wave map such that $\psi(\infty) = \ell$, and that for some $c < \delta_\ell$,
\begin{equation} \label{hyp:infty_bound}
\forall t \in [0,T^+(\vec \psi)), \quad \| \psi(t) - \ell \|_{L^\infty} \le c < d_\ell.
\end{equation}
Then $T^+(\vec \psi) = +\infty$ and $\vec \psi$ scatters at $+ \infty$, in the sense 
\[ \| \psi - \ell \|_{S_\ell([0,+\infty))} < +\infty. \]
($S_\ell$ is an adequate Strichartz space, defined below on \eqref{def:S}). 
It follows that there exists a (unique) solution $\vec \phi_L$ to \eqref{LWl} such that
\[ \| \vec \psi(t) - (\ell,0) -  \vec \phi_L(t) \|_{H \times L^2} \to 0 \quad \text{as} \quad t \to +\infty. \]
\end{thm}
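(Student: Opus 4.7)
The plan is to run a concentration-compactness and rigidity scheme in the spirit of Kenig-Merle, with the rigidity step supplied by the bubble decomposition of Theorem~\ref{th:profiles}.

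\emph{Reduction to a perturbation of \eqref{LWl}.} Set $\varphi := \psi - \ell$. Since $g(\ell) = 0$ and $f = gg'$, we have $f(\ell + \varphi) = g'(\ell)^2 \varphi + N(\varphi)$ with $N(\varphi) = O(\varphi^2)$ smooth on $[-c,c]$; the hypothesis $\|\varphi\|_{L^\infty} \le c < d_\ell$ keeps $\varphi$ away from every other zero of $g$ throughout the evolution, so $N(\varphi)$ is unambiguously defined. Thus $\vec\varphi$ solves \eqref{LWl} with source $-N(\varphi)/r^2$. Under (A3'), the Strichartz scheme of \cite{CKM08} for \eqref{LWl} yields two standard ingredients: (i) a small-data threshold $\eta_\ell > 0$ such that $\|\vec\varphi(0)\|_{H\times L^2} \le \eta_\ell$ forces $\|\varphi\|_{S_\ell} < \infty$ and $H \times L^2$-scattering to a free solution of \eqref{LWl}, and (ii) a long-time perturbation lemma in $S_\ell$ for the perturbed equation.

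\emph{Critical element.} Arguing by contradiction, suppose that $\|\psi - \ell\|_{S_\ell([0,T^+(\vec\psi)))} = +\infty$, and set
\[ c_\ast := \inf\Big\{ c' \in [0,d_\ell) : \exists \text{ wave map } \vec\psi' \text{ with } \|\psi'-\ell\|_{L^\infty} \le c' \text{ and } \|\psi'-\ell\|_{S_\ell} = +\infty \Big\}. \]
Combining a Bahouri-Gérard linear profile decomposition for the propagator of \eqref{LWl} with the perturbation lemma (and small-data scattering) in the standard way, one extracts a minimizing critical wave map $\vec\psi_\ast$ with $\|\psi_\ast-\ell\|_{L^\infty}\le c_\ast$ and $\|\psi_\ast-\ell\|_{S_\ell}=+\infty$, together with a continuous scaling $\lambda_\ast(t)$ for which the rescaled orbit of $\vec\varphi_\ast := (\psi_\ast - \ell, \partial_t \psi_\ast)$ is precompact in $\q H \times L^2$.

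\emph{Rigidity.} Precompactness of the orbit of $\vec\psi_\ast$ modulo scaling, together with the vanishing-of-averaged-kinetic-energy statement recalled in Section~2.1, produces a sequence $t_n \to T^+(\vec\psi_\ast)$ on which the space-time kinetic energy inside the light cone vanishes. Theorem~\ref{th:profiles} then applies and decomposes $\vec\psi_\ast(t_n)$ as a superposition of decoupled harmonic maps at varying scales, up to an $L^\infty$-vanishing error. However, any non-constant harmonic map connects two consecutive points of $\q V$ and therefore attains a value at distance $\ge d_\ell$ from $\ell$, contradicting $\|\psi_\ast - \ell\|_{L^\infty} \le c_\ast < d_\ell$. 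Hence no profile can occur in the decomposition, which forces $\psi_\ast(t_n) \to \ell$ in $L^\infty$; by precompactness this upgrades to $\vec\psi_\ast(t_n) \to (\ell,0)$ strongly in $\q H \times L^2$, and applying the small-data theory forward from $t_n$ contradicts $\|\psi_\ast - \ell\|_{S_\ell} = +\infty$. Once the bound $\|\psi - \ell\|_{S_\ell} < +\infty$ is established, $T^+(\vec\psi) = +\infty$ and scattering in $H \times L^2$ to a solution of \eqref{LWl} follow from a routine Duhamel/Strichartz argument.

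The main technical obstacle is that the profile decomposition and perturbation lemma must be set up purely through Strichartz bounds for \eqref{LWl}: the exterior energy channel estimates that drive the Kenig-Merle argument in radial 3D are not available here, because \eqref{LWl} behaves like a wave equation in an even spatial dimension, with the sub-case $|g'(\ell)|=2$ (corresponding to dimension $\equiv 2 \pmod 4$) being the most delicate. This is precisely why the hypothesis on $g'(\ell)$ cannot be relaxed beyond (A3').
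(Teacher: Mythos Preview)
Your overall strategy---Kenig--Merle concentration-compactness with rigidity supplied by the bubble decomposition---matches the paper's. However, your induction parameter is wrong: you minimize over the uniform bound $c'=\|\psi'-\ell\|_{L^\infty_{t,r}}$, whereas the paper minimizes over the \emph{energy} $E(\vec\psi')$ among wave maps satisfying the fixed bound \eqref{hyp:infty_bound}. Minimizing over $c'$ fails for two reasons. First, a minimizing sequence for $c_\ast$ need not have bounded energy (the $L^\infty$ bound controls $\|\vec\psi-(\ell,0)\|_{H\times L^2}$ only in terms of $E(\vec\psi)$, see \eqref{Hbound}, not absolutely), so one cannot even run a profile decomposition. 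Second, even granting bounded energy, the $L^\infty$ norm obeys a ``max'' rather than a Pythagorean expansion under profile decomposition (Theorem~\ref{th:prof_decomp}(3)), so several nonlinear profiles could simultaneously sit at the critical $L^\infty$ level without any single one absorbing all of $\vec\psi_n$---hence no precompactness. The paper instead fixes the $L^\infty$ bound, minimizes the energy to define $E_c$, extracts a critical element $\vec V$, and then checks \emph{a posteriori} that $\vec V$ inherits \eqref{hyp:infty_bound}; this last verification is itself nontrivial and uses the nonlinear evolution of the decomposition (Proposition~\ref{prop:evol_decomp}).

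The rigidity step is also organized differently. The paper never invokes precompactness: it repeats the construction on $\vec V$ itself, obtaining a sequence $t_n$ along which (i) Theorem~\ref{th:profiles} applied with $J=0$ gives $\|\vec V(t_n)-(\ell,0)\|_{H\times L^2(r\le t_n/2)}\to 0$, and (ii) a fresh linear profile decomposition of $\vec V(t_n)-(\ell,0)$ has a \emph{single} profile $\vec U_{j_0}$ at energy exactly $E_c$; the three cases $t_{j_0,n}=0$ and $t_{j_0,n}/\lambda_{j_0,n}\to\pm\infty$ are then excluded one by one. Your shortcut ``$\psi_\ast(t_n)\to\ell$ in $L^\infty$ plus precompactness $\Rightarrow$ $\vec\psi_\ast(t_n)\to(\ell,0)$ in $\q H\times L^2$'' is not justified as stated: $L^\infty$ convergence of the first component says nothing about $\partial_t\psi_\ast$, and Theorem~\ref{th:profiles} only controls the region $r\le R$ after rescaling (i.e.\ inside the light cone), while the energy could reside near $r\sim t_n$. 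Finally, your closing remark about exterior energy channels is misplaced: the paper's proof of Theorem~\ref{th:scat} uses no channel estimate whatsoever; the restriction $|g'(\ell)|\in\{1,2\}$ enters solely through the small-data Cauchy theory of \cite{CKM08}. The channel estimate (Proposition~\ref{prop:en_conc}) appears only in Section~6, under the stronger hypothesis (A3).
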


\begin{nb}
Observe that if $g$ vanishes in at most one point $\ell$, then $d_\ell = +\infty$, and so Theorem \ref{th:scat} proves that all wave maps are global and \emph{scatter} in this case. This strengthens the global well posedness result by Struwe \cite{Str03} mentioned above. 
\end{nb}

As a consequence of Theorem \ref{th:scat}, we can extract the scattering term (for \emph{all times}, not merely a sequence) in the global case. In an analogous way, we can define the regular part $\vec \phi$ in the blow up case. This is the content of Propositions \ref{prop:scat_state} and \ref{prop:reg_part} of Section 5. Let us emphasize that this step only requires (A3'). 

\medskip

In Section 6,  we revisit Theorem \ref{th:profiles}. Under the additional assumption (A3) -- crucial but used only on this step, we show that the error term tends to 0 in $H \times L^2$. This section is independent of Sections 4 and 5.

\medskip

Finally, we gather all the previous results together in Section 7 and prove of Theorem \ref{th1}.

\section{Preliminaries}

The purpose of this section is to recall or adapt a few important earlier results, and derive some consequences.

\subsection{The self-similar region}

\subsubsection{Global wave maps}

Throughout this Subsection, let $\vec \psi$ be a finite energy  wave map such that $T^+(\vec \psi) = +\infty$.

\begin{prop}[{\cite[Proposition 2.1]{CKLS13b}}] \label{prop:sse}
For all $\lambda >0$,
\[ \limsup_{t \to +\infty} E(\vec \psi(t);\lambda t,t-A) \to 0 \quad \text{as} \quad A \to + \infty. \]
\end{prop}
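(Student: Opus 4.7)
The plan is to combine two classical ingredients for finite-energy global wave maps: the monotonicity of the energy inside shifted interior cones under the local energy identity, and the Shatah--Tahvildar-Zadeh self-similar Morawetz estimate.

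\textbf{Step 1 (Monotonicity on shifted cones).} Using the local energy identity for \eqref{wm}, namely $\partial_t(re)=\partial_r(rm)$ with $e=\frac12(\psi_t^2+\psi_r^2+g(\psi)^2/r^2)$ and $m=\psi_t\psi_r$, I would integrate over the trapezoidal region $\{(s,r):t_1\le s\le t_2,\ 0\le r\le s-A\}$. The boundary term on the slanted side is
\[
\int_{t_1}^{t_2} \frac{r}{2}\bigl[(\psi_t+\psi_r)^2+g(\psi)^2/r^2\bigr]_{r=s-A}\,ds\ \ge\ 0,
\]
so $t\mapsto E(\vec\psi(t);0,t-A)$ is non-increasing. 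Hence $\beta(A):=\lim_{t\to+\infty}E(\vec\psi(t);0,t-A)$ exists, is non-increasing in $A$, and admits a limit $\beta_\infty:=\lim_{A\to\infty}\beta(A)\in[0,E(\vec\psi)]$.

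\textbf{Step 2 (Vanishing of self-similar interior energy).} Next I would invoke the Morawetz-type estimate obtained by contracting the energy--momentum tensor with the dilation multiplier $K=t\partial_t+r\partial_r$ adapted to equivariant wave maps (cf.\ \cite{CKLS13a,CKLS13b} and the classical references therein). This yields a spacetime integrability bound for the ``kinetic'' and potential content of the wave map inside strict self-similar cones. Combining this estimate with the monotonicity of Step~1 and the conservation of total energy, one shows
\[
\lim_{t\to+\infty} E(\vec\psi(t);0,\lambda t)=\beta_\infty\qquad\text{for every }\lambda\in(0,1).
\]
The inequality $\limsup_t E(\vec\psi(t);0,\lambda t)\le\beta_\infty$ is immediate from the inclusion $\{r<\lambda t\}\subset\{r<t-A\}$ valid for $t$ large; the matching lower bound $\liminf_t\ge\beta_\infty$ is the substance of the step, the Morawetz estimate ruling out the escape of a positive proportion of energy across the self-similar cone $r=\lambda t$.

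\textbf{Step 3 (Conclusion).} For $t$ sufficiently large, one splits
\[
E(\vec\psi(t);\lambda t, t-A)=E(\vec\psi(t);0, t-A)-E(\vec\psi(t);0, \lambda t);
\]
each term on the right-hand side converges as $t\to+\infty$ by Steps~1--2, giving
\[
\limsup_{t\to+\infty} E(\vec\psi(t);\lambda t, t-A)=\beta(A)-\beta_\infty,
\]
which tends to $0$ as $A\to\infty$ by the definition of $\beta_\infty$. The main obstacle is Step~2: upgrading the Morawetz spacetime integrability into the pointwise-in-$t$ convergence $\lim_t E(\vec\psi(t);0,\lambda t)=\beta_\infty$, since the Morawetz estimate by itself only yields vanishing along a subsequence; one must combine it with the shifted-cone flux identity from Step~1 in order to propagate the vanishing to all sufficiently large times.
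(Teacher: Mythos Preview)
Your outline is essentially the proof that appears in \cite{CTZ93} and \cite{CKLS13b}; the paper itself gives no argument beyond citing those references and remarking that the computation done there for $g=\sin$ carries over verbatim to general $g$. So your approach matches the intended one.

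One correction in Step~1: the sign is reversed. With the flux term you wrote (which is correct), the divergence theorem gives
\[
E(\vec\psi(t_2);0,t_2-A)-E(\vec\psi(t_1);0,t_1-A)=\int_{t_1}^{t_2}\frac{r}{2}\Bigl[(\psi_t+\psi_r)^2+\frac{g(\psi)^2}{r^2}\Bigr]_{r=s-A}ds\ \ge 0,
\]
so $t\mapsto E(\vec\psi(t);0,t-A)$ is \emph{non-decreasing}, not non-increasing. This does not affect the rest of your argument: the map is monotone and bounded above by $E(\vec\psi)$, so $\beta(A)$ exists; and your observation that $\beta(A)$ is non-increasing in $A$ (since the regions are nested at each fixed $t$) remains valid, so $\beta_\infty$ exists as well.

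Your identification of Step~2 as the substantive part is accurate: the passage from the averaged Morawetz bound to the pointwise-in-$t$ limit $E(\vec\psi(t);0,\lambda t)\to\beta_\infty$ is exactly where the work lies, and it does require combining the flux identity with the multiplier estimate, as you say.
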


\begin{proof}
The argument in \cite{CKLS13b} is done for $g =\sin$, after ideas of \cite{CTZ93}, and extends seamlessly for any non linearity $g$.
\end{proof}

We derive a few consequences from this.
One fundamental outcome of Proposition \ref{prop:sse} is that the kinetic part of the energy vanishes in an averaging sense. More precisely, we have:

\begin{cor}[{\cite[Corollary 2.2]{CKLS13b}}] \label{cor:psi_dot1}
\[ \limsup_{T \to +\infty} \frac 1 {T} \int_A^T \int_0^{t-A}  | \partial_t \psi(t,r)|^2 rdrdt \to 0 \quad \text{as} \quad A \to + \infty. \]
\end{cor}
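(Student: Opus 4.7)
The plan is to split the spatial integration $[0, t-A]$ at $r = \lambda t$ for a small auxiliary parameter $\lambda > 0$, and handle the outer self-similar annulus $\{\lambda t \le r \le t - A\}$ and the inner disk $\{0 \le r \le \lambda t\}$ by different methods.

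For the outer annulus, Proposition \ref{prop:sse} applies directly: for any $\epsilon > 0$ and fixed $\lambda > 0$, choosing $A$ sufficiently large gives $\limsup_{t \to \infty} E(\vec\psi(t); \lambda t, t-A) \le \epsilon$. Since $\int_{\lambda t}^{t-A}|\partial_t \psi|^2 r\,dr \le 2\,E(\vec\psi(t);\lambda t, t-A)$, the time average over $[A, T]$ contributes at most $2\epsilon + o_T(1)$.

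For the inner disk, I would derive a virial-type identity by multiplying \eqref{wm} by $r^2 \partial_r \psi$ and integrating over $r \in [0,R]$. Integration by parts (the boundary terms at $r=0$ vanish because $\psi(t,0) \in \q V$) yields
\[
\int_0^R r(\partial_t\psi)^2\,dr + \frac{d}{dt}\!\int_0^R r^2\,\partial_r\psi\,\partial_t\psi\,dr + \frac{g^2(\psi(t,R))}{2} = \frac{R^2}{2}\bigl[(\partial_t\psi)^2+(\partial_r\psi)^2\bigr](t,R).
\]
After moving the nonnegative potential $g^2$ term to the left, the kinetic integral is bounded by the virial variation plus the boundary trace at $r=R$. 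The virial $V(t,R) := \int_0^R r^2\partial_r\psi\,\partial_t\psi\,dr$ satisfies $|V(t,R)| \le R\,E(\vec\psi)$ by Cauchy--Schwarz, so with $R = \lambda t$ (and accounting for the extra $\lambda^3 t^2 \partial_t\psi\partial_r\psi|_{r=\lambda t}$ from the moving boundary) the virial contributes $O(\lambda E)$ after time-integration and division by $T$. The pointwise trace of $(\partial_t\psi)^2 + (\partial_r\psi)^2$ on the line $r=\lambda t$ is not a priori bounded, and this is the only delicate point: I would tame it by the classical averaging trick, replacing $\lambda$ by its mean over a small interval $[\lambda_0, 2\lambda_0]$. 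By Fubini, the trace then becomes an integral over the annulus $\{\lambda_0 t \le r \le 2\lambda_0 t\}$, which lies in the self-similar region for $t \ge A/(1 - 2\lambda_0)$; Proposition \ref{prop:sse} bounds it by $\epsilon$ for such $t$, contributing $O(\epsilon)$ in the time average. The monotonicity $\int_0^{\lambda_0 t} r(\partial_t\psi)^2\,dr \le \frac{1}{\lambda_0}\int_{\lambda_0}^{2\lambda_0}\int_0^{\lambda t} r(\partial_t\psi)^2\,dr\,d\lambda$ finally transfers the averaged bound to the specific value $\lambda_0$.

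Adding the two pieces, the $\limsup_{T\to\infty}$ of the quantity in the statement is bounded by $C\bigl(\epsilon + \lambda_0 E(\vec\psi)\bigr)$; sending first $T \to \infty$, then $\lambda_0 \to 0$, and finally $A \to \infty$ (so that $\epsilon$ can be taken arbitrarily small) yields the corollary. The main obstacle is the pointwise unbounded boundary trace on the moving line $r=\lambda t$; the $\lambda$-averaging combined with Proposition \ref{prop:sse} circumvents it.
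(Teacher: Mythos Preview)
The paper does not give its own proof of this corollary; it simply cites \cite[Corollary 2.2]{CKLS13b}, whose argument in turn follows \cite{CTZ93}. Your proposal reproduces exactly that classical proof: the splitting at $r=\lambda t$, the multiplier identity obtained from $r^2\partial_r\psi$ for the inner region, the bound $|V(t,\lambda t)|\le C\lambda t\,E(\vec\psi)$, and the averaging in $\lambda\in[\lambda_0,2\lambda_0]$ to convert the pointwise trace on $r=\lambda t$ into an integral over the self-similar annulus controlled by Proposition~\ref{prop:sse}. The argument and the order of limits you describe are correct.
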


From there, we find a sequence of times for which the condition (3) in Theorem \ref{th:profiles} holds.

\begin{cor} \label{cor:psi_t->0}
There exists a sequence $t_n \uparrow +\infty$ such that 
\[ \sup_{s, 0<s\le t_n/2} \frac{1}{s} \int_{t_n - s}^{t_n + s} \int_0^{t/2}  | \partial_t \psi(t,r)|^2 rdrdt \to 0 \quad \text{as} \quad n \to + \infty. \]
\end{cor}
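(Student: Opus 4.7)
My plan is to view
\[ F(t) := \int_0^{t/2} |\partial_t \psi(t,r)|^2\, r\, dr \]
as a bounded (by the conserved energy), non-negative function of $t$, and to rewrite the target quantity as a restricted Hardy–Littlewood maximal function
\[ M_\star F(t) := \sup_{0 < s \le t/2} \frac{1}{s} \int_{t-s}^{t+s} F(\tau)\, d\tau. \]
The goal is then to produce $t_n \uparrow +\infty$ with $M_\star F(t_n) \to 0$. The strategy is classical: Corollary \ref{cor:psi_dot1} will give a Cesàro-type decay of $F$ at infinity, and the weak-type $(1,1)$ bound for $M$ will then force $M_\star F$ to be small on most of $[T,2T]$; a diagonal extraction finishes the job.

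Concretely, I would first introduce $F_A(t) := \int_0^{t-A} |\partial_t \psi(t,r)|^2\, r\, dr$ and note that $F \le F_A$ on $[2A, +\infty)$, since there $t-A \ge t/2$. Given $\e > 0$, Corollary \ref{cor:psi_dot1} supplies $A = A(\e)$ and $T_0 = T_0(\e)$ with $\int_A^T F_A(\tau)\, d\tau \le 2\e T$ for all $T \ge T_0$. For $T \ge \max(4A, T_0)$ this yields
\[ \int_{T/2}^{3T} F(\tau)\, d\tau \le \int_A^{3T} F_A(\tau)\, d\tau \le 6\e T. \]

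Next, for any $t \in [T, 2T]$ and $0 < s \le t/2$, the averaging interval $[t-s, t+s]$ sits inside $[T/2, 3T]$, whence $M_\star F(t) \le 2 M\!\bigl(F \mathbf{1}_{[T/2, 3T]}\bigr)(t)$, with $M$ the usual centered Hardy–Littlewood maximal operator on $\m R$. The weak-type $(1,1)$ inequality then gives
\[ \bigl| \{ t \in [T, 2T] : M_\star F(t) > \sqrt\e\, \} \bigr| \le \frac{C}{\sqrt\e}\, 6\e\, T = C'\sqrt\e\, T. \]
For $\e$ small this measure is strictly less than $T$, so one can select some $t(\e) \in [T, 2T]$ with $M_\star F(t(\e)) \le \sqrt\e$. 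I then diagonalize by choosing $\e_n := 1/n^2$ and an increasing sequence $T_n \to +\infty$ with $T_n \ge \max\bigl(4A(\e_n), T_0(\e_n), 2T_{n-1}\bigr)$; setting $t_n := t(\e_n) \in [T_n, 2T_n]$ produces $t_n \uparrow +\infty$ with $M_\star F(t_n) \le 1/n \to 0$, which is the desired conclusion.

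The only real obstacle is the bookkeeping of the two scales $A$ (the small-scale cutoff, chosen in terms of $\e$) and $T$ (the large time, which must be taken large with respect to both $A$ and $T_0$); once this interleaving is arranged, the argument reduces to a textbook application of the weak-type $(1,1)$ bound on the maximal function.
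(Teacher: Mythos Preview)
Your proof is correct. Both your argument and the paper's rest on the same core idea: pass from the Ces\`aro smallness of $F(t) = \int_0^{t/2}|\partial_t\psi|^2\,r\,dr$ (Corollary~\ref{cor:psi_dot1}) to pointwise smallness of the restricted maximal average via a Vitali-type covering bound. The paper carries this out by contradiction, applying the Vitali covering lemma by hand to the intervals $(T-s(T),T+s(T))$ over $T\in[T_0,2T_0]$; you instead invoke the weak-type $(1,1)$ inequality for the Hardy--Littlewood maximal operator (whose proof is precisely that Vitali argument) and argue directly, producing the good $t_n$ constructively. Your packaging is slightly cleaner and avoids the contradiction setup, but the two arguments are really the same estimate viewed through different lenses.
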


\begin{proof}
Corollary \ref{cor:psi_dot1} shows that
\[ \limsup_{ T \to +\infty}  \frac 1 {T} \int_{2A}^T \int_0^{t-A}  | \partial_t \psi(t,r)|^2 rdrdt \to 0 \quad \text{as} \quad A \to + \infty, \]
hence, if we let $\ds f(t) = \int_0^{t/2}  | \partial_t \psi(t,r)|^2 rdr$, and as $t/2 \le t-A$ if $t \ge 2A$, we have
\[ \limsup_{ T \to +\infty}  \frac 1 {T} \int_{A}^T f(t) dt \to 0 \quad \text{as} \quad A \to + \infty. \]
We now argue by contradiction. Assume that the conclusion is not correct, then this means that for some $\delta>0$,
\[ \liminf_{T \to +\infty} \sup_{s, 0\le s \le T/2} \frac{1}{s} \int_{T -s}^{T + s} f(t) dt \ge 41 \delta. \]
Fix $A$ and $T_0 \ge 2A$ be large enough such that for all $T \ge T_0$,
\begin{itemize}
\item $\ds \frac 1 {T} \int_{A}^T f(t) dt \le \delta$
\item there exists $s(T) \in [0,T/2]$ such that $\ds \frac{1}{s(T)} \int_{T - s(T)}^{T + s(T)} f(t) dt \ge 40 \delta$. 
\end{itemize}
Consider the sets $(T - s(T), T+s(T))$ for $T \in [T_0, 2T_0]$. Their diameter is bounded by $2T_0$, hence Vitaly covering lemma applies: there exist a sequence $(T^n)_n$ such that  the intervals $(T^n - s(T^n ), T^n+s(T^n ))$ are disjoints and
\[  [T_0,2T_0] \subset \bigcup_{T \in [T_0,2T_0]}(T - s(T), T+s(T)) \subset \bigcup_{n} (T^n - 5s(T^n ), T^n+5s(T^n )). \]
From this last condition, it follows that
\[ T_0 \le 10 \sum_n s(T^n). \]
On the other hand, by definition of the $s(T^n)$, we get that
\[ \int_{T^n - s(T^n )}^{T^n+s(T^n )} f(t) dt \ge 40 \delta s(T^n), \]
and as the intervals under consideration are disjoint, we infer
\[ \int_{T_0/2}^{3T_0} f(t) dt \ge \sum_{n} \int_{T^n - s(T^n )}^{T^n+s(T^n )} f(t) dt \ge 40 \delta \sum_n s(T^n) \ge 4 \delta T_0. \]
But as $T_0/2 \ge A$, we also have
\[ \frac{1}{3T_0} \int_{T_0/2}^{3T_0} f(t) dt \le \delta, \]
and we reached a contradiction.
\end{proof}

Finally we recall that the $L^\infty$ norm outside vanishes in the self similar region and outside the light cone. 

\begin{cor}[{\cite[Corollary 2.3]{CKLS13b}}] \label{cor:lc_infty} 
For any $\lambda>0$ we have 
\begin{align*}
\| \psi(t) - \psi(\infty)\|_{L^{\infty}(r \ge \lambda t)} \to 0 \quad \text{as} \quad t \to \infty.
\end{align*}
\end{cor}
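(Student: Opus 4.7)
The plan is to combine Proposition~\ref{prop:sse} (vanishing of energy in the self-similar region) with two Cauchy--Schwarz pointwise estimates and with finite speed of propagation. The two pointwise tools come from Cauchy--Schwarz in the radial integration: for any $0 \le r_1 < r_2$,
\be
|\psi(t,r_1) - \psi(t,r_2)|^2 \le \log(r_2/r_1) \, E(\vec\psi(t); r_1,r_2),
\ee
and, using that $g(\psi) \partial_r \psi = \partial_r G(\psi)$,
\be
|G(\psi(t,r_1)) - G(\psi(t,r_2))| \le \tfrac{1}{2} E(\vec\psi(t); r_1,r_2).
\ee
Sending $r_2 \to \infty$ in the second, and using assumption (A2) together with the strict monotonicity of $G$, yields a quantitative pointwise bound on $|\psi(t,r_1) - \psi(\infty)|$ in terms of the tail energy $E(\vec\psi(t); r_1,\infty)$, provided $\psi(t,r_1)$ lies in a fixed neighbourhood of the isolated point $\psi(\infty) \in \q V$ (a property that is easily bootstrapped from the fact that $\psi_0(r) \to \psi(\infty)$ as $r \to \infty$).

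Next, we fix parameters $A \gg 1$ and $\mu \in (0,1)$ small, take $\lambda < 1$ without loss of generality, and split $\{r \ge \lambda t\}$ into three pieces. On the \emph{exterior} $\{r \ge (1+\mu) t\}$, finite speed of propagation gives $E(\vec\psi(t); (1+\mu)t, \infty) \le E(\vec\psi(0); \mu t, \infty) \to 0$ as $t \to \infty$, so the $G$-tail estimate above produces uniform smallness of $|\psi(t,r) - \psi(\infty)|$ in this region; in particular, $\psi(t,(1+\mu) t) \to \psi(\infty)$. On the \emph{self-similar annulus} $\{\lambda t \le r \le t - A\}$, we apply the log-Cauchy--Schwarz bound with $r_2 = t - A$: the log factor is bounded by $\log(1/\lambda)$ and the energy factor $E(\vec\psi(t); \lambda t, t-A)$ vanishes, as $\limsup_t$ then $A \to \infty$, by Proposition~\ref{prop:sse}. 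On the \emph{fat light cone} $\{t - A \le r \le (1+\mu) t\}$, we apply the same bound with $r_2 = (1+\mu) t$: here the energy is controlled only by the conserved quantity $E(\vec\psi(0))$, but the log factor $\log((1+\mu)t/(t - A))$ tends to $\log(1+\mu) = O(\mu)$ as $t \to \infty$. Chaining by triangle inequality through the anchor points $\psi(t, t-A)$ and $\psi(t, (1+\mu)t)$, then taking $\limsup_{t \to \infty}$, followed by $A \to \infty$ and finally $\mu \to 0$, produces the conclusion.

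The hard part will be the fat cone: on this region the energy density need not vanish in the limit, since an outgoing scattering wave (if present) is typically concentrated precisely near $r \approx t$, and hence no tail-energy bound can directly make $|\psi(t,r) - \psi(\infty)|$ small there. The crucial observation that rescues the argument is that although the fat cone has width of order $t$ on the physical scale, it has width only of order $\mu$ on the logarithmic scale, and it is the logarithmic width that enters the Cauchy--Schwarz estimate controlling $\psi(t,r) - \psi(t,(1+\mu)t)$.
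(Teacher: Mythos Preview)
Your argument is correct. The paper itself gives no proof beyond citing \cite{CKLS13b} and remarking that the case $g=\sin$ there extends to general $g$; what you have written is precisely the standard argument behind that citation: combine the pointwise bounds $|\psi(t,r_1)-\psi(t,r_2)|^2 \le \log(r_2/r_1)\,E(\vec\psi(t);r_1,r_2)$ and $|G(\psi(t,r_1))-G(\psi(t,r_2))| \le \tfrac12 E(\vec\psi(t);r_1,r_2)$ (the latter is Lemma~\ref{lem:EH}(1)) with Proposition~\ref{prop:sse} and finite speed of propagation, splitting into the exterior, the self-similar annulus, and the thin logarithmic shell around the light cone. One small remark: your appeal to (A2) for the exterior region in fact also uses (A1), which guarantees that $G$ is a homeomorphism $\m R\to\m R$ and hence that $G^{-1}$ is uniformly continuous on the bounded range of $\psi$; this is what converts $G(\psi(t,r))\to G(\psi(\infty))$ into $\psi(t,r)\to\psi(\infty)$ without any bootstrap.
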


\begin{proof}
The argument in \cite{CKLS13b} is done for $g =\sin$ and can be extended seamlessly for any non linearity $g$.
\end{proof}

\subsubsection{Blow up wave maps}

Throughout this Subsection, let $\vec \psi$ be a finite energy  wave map such that $T^+(\vec \psi) < +\infty$. The results here very similar to those in the global case, and in fact simpler (integration can be done up to the light cone).

\begin{prop}[{\cite[Lemma 2.2]{STZ92}}] \label{prop:sse2}
For all $\lambda \in (0,1)$,
\[ E(\vec \psi(t);\lambda ( T^+(\vec \psi)-t), T^+(\vec \psi)-t) \to 0 \quad \text{as} \quad t \uparrow 1. \]
\end{prop}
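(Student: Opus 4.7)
The plan is to follow the Shatah--Tahvildar-Zadeh argument \cite{STZ92}, which is the blow-up counterpart of Proposition \ref{prop:sse} and technically simpler, since all integrations take place in the compact truncated backward light cone. Normalize $T^+(\vec\psi) = 1$ by translation, write $K_{t,t'} := \{(s,r): t \le s \le t',\ 0 \le r \le 1-s\}$ for the truncated backward cone with vertex $(1,0)$, and set $e(t) := E(\vec\psi(t); 0, 1-t)$.

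First, I would establish the local energy monotonicity. Applying the divergence theorem to the stress-energy tensor contracted with $\partial_t$ on $K_{t,t'}$, using the null character of the mantle and the wave map equation \eqref{wm}, one obtains
\[ e(t) - e(t') = \mathrm{Flux}(t, t') \ge 0 \quad \text{for } t \le t' < 1, \]
where the flux is a manifestly non-negative boundary integral on the mantle (a squared null derivative plus the potential $g(\psi)^2/r^2$). Hence $e$ is non-increasing, admits a limit $e^* \ge 0$ as $t \uparrow 1$, and $\mathrm{Flux}(t, t') \to 0$ as $t, t' \uparrow 1$.

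Next, I would derive a Morawetz-type identity using the self-similar multiplier $X\psi := (s-1)\partial_s\psi + r\partial_r\psi$. Integration by parts on $K_{t,t'}$ produces a non-negative bulk term controlling a weighted space-time integral of $(X\psi)^2$ and of $((1-s)^2 - r^2)g(\psi)^2/r^2$, while the slice boundary terms are absorbed by $e(t) + e(t')$ and the lateral contribution by the flux. Combined with the first step, this gives integrability of the self-similar density on the full cone $K_{0,1}$; a standard averaging argument then yields $E(\vec\psi(t); \lambda(1-t), 1-t) \to 0$ as $t \uparrow 1$ for every $\lambda \in (0,1)$.

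The delicate point is recovering the potential term $g(\psi)^2/r^2$ in the full annulus $\lambda(1-t) \le r \le 1-t$: the Morawetz identity only controls the degenerate weight $((1-s)^2 - r^2)/(1-s)^2$, which vanishes on the mantle. This is compensated by the flux decay from the first step, which forces $\psi$ to approach a constant on the lateral boundary and hence the potential density to vanish there in a suitable sense; the complete argument is worked out in \cite[Lemma 2.2]{STZ92}, which we cite directly.
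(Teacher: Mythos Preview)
The paper provides no proof of its own for this proposition; it is simply stated with the citation to \cite[Lemma 2.2]{STZ92} and used as a black box. Your proposal correctly sketches the Shatah--Tahvildar-Zadeh argument that the citation refers to (flux decay from the local energy identity, followed by the self-similar multiplier $(s-1)\partial_s + r\partial_r$ and an averaging step), so there is nothing to compare: you have supplied exactly the proof the paper defers to.
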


\begin{cor}[{\cite[Corollary 2.2]{STZ92}}] \label{cor:psi_dot2}
\[ \frac 1 {T^+(\vec \psi)-T} \int_T^{T^+(\vec \psi)} \int_0^{T^+(\vec \psi)-t}  | \partial_t \psi(t,r)|^2 rdrdt \to 0 \quad \text{as} \quad T \to T^+(\vec \psi). \]
\end{cor}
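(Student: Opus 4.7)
The idea is to combine the classical flux monotonicity in the backward light cone with a Morawetz-type virial identity based on the multiplier $r\,\partial_r \psi$; Proposition \ref{prop:sse2} enters crucially to control one remaining boundary term. Normalize so that $T^+(\vec\psi) = 1$.

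\emph{Flux identity.} Multiplying \eqref{wm} by $\partial_t\psi$ and integrating by parts on the truncated cone $K_{T,T'} = \{T \le t \le T',\ 0 \le r \le 1-t\}$ with the natural weight $r\,dr\,dt$, the local energy $e(t) := E(\vec\psi(t); 0, 1-t)$ is non-increasing, converges to some limit $E_* \in [0, E(\vec\psi(0))]$, and satisfies
\[ e(T) - E_* = \int_T^{1} \left[ \frac{1-t}{2}(\partial_t\psi - \partial_r\psi)^2 + \frac{g(\psi)^2}{2(1-t)} \right]_{r=1-t} dt. \]
In particular, both $\int_T^1 (1-t)(\partial_t\psi - \partial_r\psi)^2\big|_{r=1-t}\,dt$ and $\int_T^1 g(\psi)^2/(1-t)\big|_{r=1-t}\,dt$ tend to $0$ as $T \uparrow 1$.

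\emph{Virial identity.} Multiplying \eqref{wm} by $r\,\partial_r\psi$ (using $f(\psi)\partial_r\psi = \frac{1}{2}\partial_r(g(\psi)^2)$), integrating over $K_{T,T'}$ with measure $dr\,dt$, and handling the $t$-dependence of the upper $r$-limit via Leibniz, one obtains the identity
\[ \iint_{K_{T,T'}} r|\partial_t\psi|^2\,dr\,dt = -\left[\int_0^{1-t} r^2 \partial_t\psi\,\partial_r\psi\,dr\right]_T^{T'} + \int_T^{T'} \left[\frac{(1-t)^2}{2}(\partial_t\psi - \partial_r\psi)^2 - \frac{g(\psi)^2}{2}\right]_{r=1-t} dt. \]
Sending $T' \uparrow 1$, the $T'$-boundary term is dominated by $(1-T')e(T') \to 0$; using $(1-t) \le 1-T$ and the flux estimates above, the mantle integral on the right is bounded by $O((1-T)(e(T)-E_*)) = o(1-T)$.

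\emph{Control of the $T$-boundary term and conclusion.} The remaining task is to show that $A(T) := \int_0^{1-T} r^2 \partial_t\psi\,\partial_r\psi(T,r)\,dr = o(1-T)$, which is the crux of the argument: a direct Cauchy--Schwarz yields only $|A(T)| \le (1-T)\,e(T)$, insufficient here. Proposition \ref{prop:sse2} closes the gap. Fix $\lambda \in (0,1)$ and split at $r = \lambda(1-T)$: on the outer piece $[\lambda(1-T), 1-T]$, Cauchy--Schwarz and $r \le 1-T$ give
\[ \left| \int_{\lambda(1-T)}^{1-T} r^2 \partial_t\psi\,\partial_r\psi\,dr \right| \le (1-T)\cdot E(\vec\psi(T); \lambda(1-T), 1-T), \]
which is $o(1-T)$ by Proposition \ref{prop:sse2}; on the inner piece $[0, \lambda(1-T)]$, the bound $r \le \lambda(1-T)$ yields $\lambda(1-T)\,e(T) \le \lambda(1-T)\,E(\vec\psi(0))$. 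Hence $\limsup_{T \uparrow 1} |A(T)|/(1-T) \le \lambda\,E(\vec\psi(0))$, and sending $\lambda \to 0$ gives $A(T) = o(1-T)$. Assembling the three contributions, $\frac{1}{1-T}\int_T^1 \int_0^{1-t} |\partial_t\psi|^2\,r\,dr\,dt \to 0$ as $T \uparrow 1$. The whole argument really hinges on this $T$-boundary estimate, which would fail without the nonconcentration input from Proposition \ref{prop:sse2}.
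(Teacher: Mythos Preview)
Your proof is correct and is essentially the classical argument of Shatah and Tahvildar-Zadeh \cite{STZ92}, which the paper simply cites without reproving. The paper gives no independent proof of this statement, so there is nothing to compare your argument against beyond the original reference.

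One small cosmetic point: you describe the virial step as ``multiplying by $r\,\partial_r\psi$ and integrating with measure $dr\,dt$'', but the identity you then write down (with $r^2\,\partial_t\psi\,\partial_r\psi$ in the boundary term and $r|\partial_t\psi|^2$ in the bulk) is what one obtains by multiplying by $r\,\partial_r\psi$ and integrating with the natural measure $r\,dr\,dt$ (equivalently, multiplying by $r^2\,\partial_r\psi$ with $dr\,dt$). The identity itself is correct, and the subsequent estimates --- the $T'$-boundary via $(1-T')e(T')\to 0$, the mantle via the flux bound, and the crucial $T$-boundary via splitting at $\lambda(1-T)$ and invoking Proposition~\ref{prop:sse2} --- are all sound.
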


\begin{cor} \label{cor:psi_t->02}
There exists a sequence $t_n \uparrow T^+(\vec \psi)$ such that 
\[ \sup_{s, 0<s\le T^+(\vec \psi)-t_n} \frac{1}{s} \int_{t_n - s}^{t_n + s} \int_0^{T^+(\vec \psi)-t}  | \partial_t \psi(t,r)|^2 rdrdt \to 0 \quad \text{as} \quad n \to + \infty. \]
\end{cor}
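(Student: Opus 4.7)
Let me denote $T := T^+(\vec\psi)$ for brevity and set $f(t) := \int_0^{T-t} |\partial_t \psi(t,r)|^2\, r\, dr$, so Corollary \ref{cor:psi_dot2} reads
\[ \frac{1}{T-T'} \int_{T'}^{T} f(t)\, dt \longrightarrow 0 \quad \text{as } T' \uparrow T. \]
My plan is to mimic exactly the Vitali covering argument used in Corollary \ref{cor:psi_t->0}, adapted to the finite horizon $T$ instead of infinity. The conclusion to prove reads: there exists $t_n \uparrow T$ such that $F(t_n) \to 0$, where
\[ F(t) := \sup_{0 < s \le T - t} \frac{1}{s} \int_{t-s}^{t+s} f(\tau)\, d\tau. \]

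Arguing by contradiction, suppose this fails: then $\liminf_{t \uparrow T} F(t) \ge \mu$ for some $\mu > 0$. Pick $T_0 < T$ close enough to $T$ so that, simultaneously,
\[ (\mathrm{i})\ \frac{1}{T-T'}\int_{T'}^{T} f(\tau)\, d\tau \le \frac{\mu}{100} \text{ for all } T' \in [2T_0 - T, T), \qquad (\mathrm{ii})\ F(t) \ge \tfrac{\mu}{2} \text{ for all } t \in [T_0, T); \]
the first is possible by Corollary \ref{cor:psi_dot2} applied in particular at $T' = 2T_0 - T$, the second by the contradictory hypothesis. From (ii), for each $t \in [T_0, T)$ select $s(t) \in (0, T-t]$ with $\frac{1}{s(t)} \int_{t-s(t)}^{t+s(t)} f(\tau)\, d\tau \ge \tfrac{\mu}{2}$.

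Now restrict to $t$ in the sub-interval $[T_0, M]$ with $M := (T_0+T)/2$, of length $(T-T_0)/2$. The open intervals $I(t) := (t - s(t), t + s(t))$ cover $[T_0, M]$ and have diameters bounded by $2(T-T_0)$, so Vitali's covering lemma produces a disjoint subfamily $\{I(t_n)\}_n$ whose $5$-fold dilates still cover $[T_0, M]$; consequently
\[ \sum_n s(t_n) \ge \frac{M - T_0}{10} = \frac{T-T_0}{20}. \]
Since $s(t) \le T - t$ forces $I(t) \subset [2T_0 - T, T]$, disjointness and the lower bound on each integral give
\[ \int_{2T_0-T}^{T} f(\tau)\, d\tau \ge \sum_n \int_{I(t_n)} f(\tau)\, d\tau \ge \frac{\mu}{2} \sum_n s(t_n) \ge \frac{\mu (T-T_0)}{40}, \]
while (i) applied at $T' = 2T_0 - T$ gives $\int_{2T_0-T}^{T} f(\tau)\, d\tau \le \frac{\mu}{100} \cdot 2(T-T_0) = \frac{\mu(T-T_0)}{50}$, which contradicts the previous inequality.

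The argument is a direct transcription of the proof of Corollary \ref{cor:psi_t->0}, with the difference that one integrates forward up to $T$ rather than forward to $+\infty$ (this is actually slightly easier, as the averaged bound now holds on a full interval $[T', T]$ rather than on $[A, T]$ with two separate cutoffs). The only real point of attention is numerological: one must make sure the constants line up so that the lower bound from Vitali (proportional to $\mu/40$) strictly exceeds the upper bound from Corollary \ref{cor:psi_dot2} (made as small as $\mu/50$ by choosing $T_0$ close enough to $T$). No new idea beyond the global case is needed.
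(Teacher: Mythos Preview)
Your proof is correct and follows essentially the same Vitali covering argument as the paper's proof: both define $f(t)$ identically, argue by contradiction, select for each $t$ near $T$ a scale $s(t)$ on which the average is bounded below, and then use Vitali to contradict the averaged decay from Corollary~\ref{cor:psi_dot2}. The only cosmetic differences are your restriction to the compact sub-interval $[T_0,(T_0+T)/2]$ (the paper covers all of $[T_0,T)$ directly) and the specific numerical constants, neither of which affects the argument.
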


\begin{proof}
It is very similar to the proof of Corollary \ref{cor:psi_t->0}. Let 
\[ f(t) = \int_0^{T^+(\vec \psi)-t}  | \partial_t \psi(t,r)|^2 rdr \to 0. \]
$f(t) \ge 0$ and we know that 
\[ \frac{1}{T^+(\vec \psi)-T} \int_T^{T^+(\vec \psi)} f(t) dt \to 0 \quad \text{as} \quad T \to T^+(\vec \psi). \]
Assume that the conclusion fails, for the sake of contradiction: then there exists $\delta>0$ and a function  $s$ defined on $[T^+(\vec \psi)-\delta,T^+(\vec \psi))$ such that $0 \le s(T) \le T^+(\vec \psi)-T$ and
\begin{gather} \label{eq:contr3}
\frac{1}{s(T)} \int_{T-s(T)}^{T+s(T)} f(t)dt \ge 40 \delta.
\end{gather}
Corollary \ref{cor:psi_dot2} yields $T_0<T^+(\vec \psi)$ such that for all $T \in [2T_0-T^+(\vec \psi),T^+(\vec \psi))$, 
\begin{gather} \label{eq:contr4}
\frac{1}{T^+(\vec \psi)-T} \int_T^{T^+(\vec \psi)} f(t) dt \le \delta.
\end{gather}
We apply Vitali covering lemma to the intervals $(T-s(T), T+s(T))$ where $T \in [T_0,T^+(\vec \psi))$ to find a sequence $T^n$ such that $(T^n-s(T^n), T^n+s(T^n))$ are disjoints and
\[ [T_0,T^+(\vec \psi)) \subset \bigcup (T^n - 5 s(T^n), T^n + 5s (T^n)). \]
Hence taking the length:
\[ T^+(\vec \psi)-T_0 \le 10 \sum_n s(T^n). \]
Therefore, as the interval are disjoint, and using \eqref{eq:contr3},
\[ \int_{2T_0-T^+(\vec \psi)}^{T^+(\vec \psi)} f(t) dt \ge \sum_n \int_{T^n -s(T^n)}^{T^n + s(T^n)} f(t) dt \ge 4 \delta (T^+(\vec \psi)-T_0). \]
Now $T^+(\vec \psi)-(2T_0-1) = 2 (T^+(\vec \psi)-T_0)$, and we reached a contradiction with \eqref{eq:contr4}.
\end{proof}

\subsection{Energy concentration on the light cone for \texorpdfstring{\eqref{LWl}}{(LWl)}}

For the rest of this Subsection, we fix $\ell \in \q V$ and focus on the linear equation \eqref{LWl}.

We define the transformation $\q T$ defined by
\[ (\q T\phi)(r) = \phi(r)/ r^{g'(\ell)}. \]
Then  $\vec \phi$ is a solution of $\eqref{LWl}$ if and only if $\vec \varphi = (\q T \phi, \q T \partial_t \phi)$ solves the radial wave equation in $2+2g'(\ell)$ dimensions:
\begin{equation} \label{LWkD}
\partial_{tt} \varphi - \partial_{rr} \varphi - \frac{1+2 g'(\ell)}{r} \partial_r \varphi =0.
\end{equation}

Observe that 
\[ \| \phi \|_{H_\ell} = \| \q T \phi \|_{\dot H^1(r^{1+2g'(\ell)}dr)}. \]
 The norms $H_\ell$ and $H$ are equivalent. It follows that $\q T$ is a bicontinuous bijective linear map $L^2(rdr) \to L^2(r^{1+2g'(\ell)} dr)$ and $H \to \dot H^1(r^{1+2g'(\ell)} dr)$.

We start by recalling a result regarding equipartition of energy and concentration of energy on the light cone for linear solutions.

\begin{prop} \label{prop:light_cone_en_conc}
Let $\vec \phi$ be a solution to \eqref{LWl}. Then
\[ \limsup_{t \to +\infty} \| \vec \phi(t) \|_{H \times L^2(|r-t|\ge A)} \to 0 \quad \text{as} \quad A \to +\infty. \]
Also,
\[ \| \partial_t \phi(t) \|_{L^2}^2 \to \frac{1}{2} \| \vec\phi(t) \|_{H_\ell \times L^2}^2, \quad \| \phi(t) \|_{H_\ell}^2 \to \frac{1}{2} \| \vec\phi(t) \|_{H_\ell \times L^2}^2. \]
\end{prop}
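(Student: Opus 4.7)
The strategy is to apply the transformation $\q T$ to reduce \eqref{LWl} to the free radial wave equation \eqref{LWkD} in the fictitious dimension $d = 2 + 2g'(\ell)$. Since $\q T$ intertwines $\|\cdot\|_{H_\ell}$ with $\|\cdot\|_{\dot H^1(r^{d-1}dr)}$ and $\|\cdot\|_{L^2(rdr)}$ with $\|\cdot\|_{L^2(r^{d-1}dr)}$, both claimed statements for $\vec\phi$ are equivalent to the analogous statements for $\vec\varphi = (\q T\phi, \q T\partial_t\phi)$: the energy of $\vec\varphi$ outside the annulus $\{|r-t| \le A\}$ vanishes as first $t\to\infty$ and then $A\to\infty$, and the kinetic and potential halves of the energy equidistribute.

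For the equipartition of energy, I would use the spectral representation of the free radial wave propagator,
\[ \varphi(t) = \cos(t\sqrt{-\Delta})\varphi_0 + \frac{\sin(t\sqrt{-\Delta})}{\sqrt{-\Delta}}\varphi_1, \]
so that Plancherel's identity transforms $\|\nabla \varphi(t)\|_{L^2}^2 - \|\partial_t \varphi(t)\|_{L^2}^2$ into a sum of Fourier integrals whose integrands carry oscillatory factors $\cos(2t|\xi|)$ and $\sin(2t|\xi|)$. The Riemann--Lebesgue lemma forces this difference to $0$, and the conservation law $\|\nabla \varphi(t)\|_{L^2}^2 + \|\partial_t\varphi(t)\|_{L^2}^2 \equiv E(\vec\varphi)$ then yields the equipartition.

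For the light cone concentration, I would proceed by density in the energy space. If $\vec\varphi$ has smooth initial data supported in $\{r \le R\}$, finite speed of propagation gives $\varphi(t, r) \equiv 0$ on $\{r \ge t + R\}$, which takes care of the exterior part of the complement of the annulus once $A > R$. For the interior part $\{r \le t - A\}$, I would invoke the dispersive pointwise decay for the free wave equation (equivalently, the Friedlander radiation field asymptotics $r^{(d-1)/2}\varphi(t, r) \sim F(r-t)$ as $t, r \to \infty$ with $r-t$ bounded) to bound the energy in that region uniformly in $t$ by a quantity that tends to $0$ as $A \to \infty$. A standard approximation argument then extends the estimate to all finite-energy data.

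The main obstacle is precisely the interior region $\{r \le t - A\}$: because the fictitious dimension $d = 2+2g'(\ell)$ is even, strong Huygens' principle fails and energy does leak behind the light cone, so finite speed of propagation alone is insufficient. The cleanest tool is Friedlander's radiation field isomorphism between the free radial energy space and $L^2(\m R)$, which simultaneously encodes the concentration of energy on $\{r=t\}$ and its equipartition between kinetic and potential parts.
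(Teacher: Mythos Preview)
Your proposal is correct. The paper's own proof is a two-line citation: it defers the light-cone concentration to \cite[Theorem~4]{CKS13} and dismisses equipartition as classical. Your sketch is therefore not so much a different route as an unpacking of what those citations contain: the reduction via $\q T$ to the radial free wave equation in dimension $d=2+2g'(\ell)$ is exactly the mechanism the paper sets up just before the Proposition, and the Friedlander radiation-field isomorphism you identify is precisely the engine behind \cite{CKS13}. The Riemann--Lebesgue argument for equipartition is the standard one.

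One minor comment: in your treatment of the interior region $\{r\le t-A\}$ you first mention ``dispersive pointwise decay'' before correctly settling on the radiation field. Pointwise dispersive decay alone (e.g.\ $\|\varphi(t)\|_{L^\infty}\lesssim t^{-(d-1)/2}$) is not quite enough to control the localized $\dot H^1\times L^2$ energy on $\{r\le t-A\}$ uniformly in $t$; it is really the $L^2$ asymptotic $r^{(d-1)/2}\partial_t\varphi(t,r)\to G(r-t)$, $r^{(d-1)/2}\partial_r\varphi(t,r)\to -G(r-t)$ in $L^2(\m R)$ (the radiation field) that does the job, as you note at the end. So the proof plan is sound, but the first phrasing is a slight red herring.
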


\begin{proof}
The first statement is the content of \cite[Theorem 4]{CKS13}. The second is equipartition of the energy, and is classical for the linear wave equation.
\end{proof}

We will sometimes use, in the context of a profile decomposition, the following weaker form, namely all the energy concentrates on one scale.

\begin{cor} \label{cor:prof_energy_scale}
Let $\vec \phi$ be a solution to \eqref{LWl}, and $(t_n, \lambda_n)$ be two sequences with $\lambda_n >0$, and such that
\[ \frac{|t_n|}{\lambda_n} \to +\infty. \]
Then for any $c >1$, as $n \to +\infty$,
\[ \left\| \left( \phi \left( - \frac{t_n}{\lambda_n} , \frac{r}{\lambda_{j,n}} \right), \frac{1}{\lambda_{j,n}} \partial_t \phi \left( - \frac{t_n}{\lambda_n} , \frac{r}{\lambda_{j,n}} \right) \right) \right\|_{H_\ell \times L^2(\frac{1}{c} t_n \le r \le ct_n)} \to \| \vec \phi(0) \|_{H_\ell \times L^2}. \]
\end{cor}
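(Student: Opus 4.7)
The strategy is to rescale back to the intrinsic time of $\vec \phi$ using the scaling invariance of the $H_\ell \times L^2$ norm, and then to combine the conservation of this norm along \eqref{LWl} with the light-cone concentration provided by Proposition \ref{prop:light_cone_en_conc}.

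Set $s_n := t_n/\lambda_n$; by hypothesis $|s_n| \to +\infty$, and since the interval $[t_n/c, c t_n]$ must intersect $(0,\infty)$ we may assume $s_n > 0$ for $n$ large. The integrand defining $\|\cdot\|_{H_\ell}$ is homogeneous of degree $0$ under the scaling $\phi \mapsto \phi(\cdot/\lambda)$ with measure $rdr$, and the $L^2(rdr)$ norm of $\phi_1$ is invariant under $\phi_1 \mapsto \lambda^{-1}\phi_1(\cdot/\lambda)$. After the change of variable $r = \lambda_n \tilde r$, one therefore obtains
\[ \left\| \Lambda[\lambda_n]\vec \phi(-s_n) \right\|_{H_\ell \times L^2([t_n/c,\,ct_n])} = \| \vec \phi(-s_n) \|_{H_\ell \times L^2([s_n/c,\,c s_n])}, \]
so the problem reduces to showing that the right-hand side tends to $\|\vec \phi(0)\|_{H_\ell \times L^2}$.

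Since \eqref{LWl} conserves the $H_\ell \times L^2$ norm, $\| \vec \phi(-s_n) \|_{H_\ell \times L^2} = \| \vec \phi(0) \|_{H_\ell \times L^2}$ for every $n$. It therefore suffices to prove
\[ \| \vec \phi(-s_n) \|_{H_\ell \times L^2([0,\, s_n/c]\,\cup\,[c s_n,\,\infty))} \longrightarrow 0. \]
For this I invoke Proposition \ref{prop:light_cone_en_conc}, applied to the time-reversed solution $(t,r)\mapsto \phi(-t,r)$ of \eqref{LWl}: for every fixed $A>0$,
\[ \| \vec \phi(-s_n) \|_{H \times L^2(|r-s_n|\ge A)} \longrightarrow 0 \quad \text{as } n \to \infty. \]
Because $c>1$ and $s_n\to+\infty$, we have $s_n(1-1/c)\ge A$ and $s_n(c-1)\ge A$ as soon as $n$ is large (depending on $A$), hence $[0, s_n/c]\cup [c s_n,\infty) \subset \{|r-s_n|\ge A\}$. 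Using the equivalence of the $H$ and $H_\ell$ norms noted after \eqref{LWkD}, a standard diagonal extraction in $A\to\infty$ then yields the desired vanishing of the outside part, which combined with the previous paragraph completes the proof.

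The argument is essentially a soft bookkeeping of known facts; the only real content is Proposition \ref{prop:light_cone_en_conc} (which is already granted), so no step should present serious difficulty — the mild technical point is simply making sure the two-sided ``outside'' interval is eventually engulfed by $\{|r-s_n|\ge A\}$, which forces $c>1$ to enter precisely at this juncture.
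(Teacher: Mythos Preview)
Your argument is correct and is precisely the intended derivation from Proposition \ref{prop:light_cone_en_conc} (the paper states the corollary without proof as a ``weaker form'' of that proposition): rescale, use conservation of $\|\cdot\|_{H_\ell\times L^2}$, and show the complement of $[s_n/c,\,cs_n]$ carries vanishing norm via light-cone concentration. One small wording fix: Proposition \ref{prop:light_cone_en_conc} only gives $\limsup_{t\to\infty}\|\vec\phi(t)\|_{H\times L^2(|r-t|\ge A)}\to 0$ as $A\to\infty$, not convergence to $0$ for each fixed $A$; your ``diagonal in $A$'' remark is exactly the right way to use this, so simply replace the sentence asserting the fixed-$A$ limit by the inequality $\limsup_n \|\vec\phi(-s_n)\|_{H_\ell\times L^2([0,s_n/c]\cup[cs_n,\infty))}\le C\limsup_{t\to\infty}\|\vec\phi(t)\|_{H\times L^2(|r-t|\ge A)}$ and then let $A\to\infty$.
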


Now we recall a result giving some condition so that some energy of a linear solution remains outside of the light cone. It was already crucial in \cite{CKLS13a,CKLS13b}. Here is the only place in the argument where we need to restrict to odd $g'(\ell)$ (and hence (A3) to have the nonlinear argument run).

\begin{prop}[{\cite[Theorem 1]{CKS13}}] \label{prop:en_conc}
Assume $g'(\ell)$ is an odd integer. There exists $\beta(\ell) >0$ such that the following holds. 
Let $\vec \phi$ be a solution to \eqref{LWl}, such that
\[ \partial_t \phi (0) =0. \]
Then for all $t \in \m R$,
\[ \| \vec \phi(t) \|_{H \times L^2(r \ge |t|)}^2 \ge \beta(\ell) \| \vec \phi(0) \|_{H \times L^2}. \]
\end{prop}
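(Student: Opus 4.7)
The plan is to transfer the statement, via the transformation $\q T$ introduced in Section 2.2, into an exterior energy estimate for the radial free wave equation in the dimension $d$ corresponding to \eqref{LWkD}, and then to invoke directly \cite[Theorem 1]{CKS13}. Since $g'(\ell)$ is assumed to be an odd integer, $d \equiv 0 \pmod 4$, which is exactly the range of dimensions treated in that reference.

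First I would set $\varphi = \q T \phi$, so that $\vec \varphi$ solves \eqref{LWkD} and the condition $\partial_t \phi(0) = 0$ becomes $\partial_t \varphi(0) = 0$. The norm identity $\|\phi\|_{H_\ell} = \|\q T \phi\|_{\dot H^1(r^{d-1} dr)}$, together with the comparability of $H$ and $H_\ell$ recalled before Proposition \ref{prop:light_cone_en_conc}, gives
\[ \| \vec \phi(t) \|_{H \times L^2(r \ge |t|)}^2 \simeq \| \vec \varphi(t) \|_{\dot H^1 \times L^2(r^{d-1} dr,\, r \ge |t|)}^2 \]
uniformly in $t$. Hence it suffices to prove the analogous lower bound on the exterior region $\{r \ge |t|\}$ for radial solutions to the $d$-dimensional free wave equation with initial velocity zero.

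Second, I would invoke \cite[Theorem 1]{CKS13}: for radial solutions of the free wave equation in dimension $d \equiv 0 \pmod 4$ with initial data of the form $(\varphi_0, 0)$, there exists $\beta(d) > 0$ such that
\[ \| \vec \varphi(t) \|_{\dot H^1 \times L^2(r \ge |t|)}^2 \ge \beta(d) \| \vec \varphi(0) \|_{\dot H^1 \times L^2}^2 \qquad \text{for every } t \in \m R. \]
Combined with the previous step, this yields Proposition \ref{prop:en_conc} with $\beta(\ell) = c\,\beta(d)$ for the constant $c$ of the norm equivalence.

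The hard part is the content of \cite{CKS13} itself, and it is worth recalling where the parity hypothesis really enters. In odd spatial dimensions the strong Huygens principle essentially gives the estimate for free: conjugation reduces the radial wave equation to a 1D transport equation and half of the initial energy is immediately transported outside the light cone. In even dimensions a tail enters the cone, and one is forced to work with an explicit Fourier/Hankel representation of the exterior energy. The associated bilinear form on Cauchy data turns out to be positive definite on data of the form $(\varphi_0, 0)$ precisely when $d \equiv 0 \pmod 4$, while the sign flips for $d \equiv 2 \pmod 4$. This dichotomy is the ultimate reason the result is restricted to $g'(\ell)$ odd, i.e.\ to assumption (A3) in Theorem \ref{th1}.
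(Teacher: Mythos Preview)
Your proposal is correct and follows exactly the same route as the paper: reduce via the transformation $\q T$ to the radial free wave equation in dimension $d = 2 + 2g'(\ell) \equiv 0 \pmod 4$, invoke \cite[Theorem~1]{CKS13} for the exterior energy lower bound with data $(\varphi_0,0)$, and transfer back using the equivalence of the $H$ and $H_\ell$ norms. The paper's proof is precisely this, citing \cite[Corollary~2.3]{CKLS13a} for the details of the passage between \eqref{LWl} and the linear wave equation.
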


\begin{proof}
As mentioned, this is an easy consequence of \cite[Theorem 1]{CKS13} and the remark that follows. We refer to \cite[Corollary 2.3]{CKLS13a} for the complete argument to pass from the linear wave equation to \eqref{LWl}. 
\end{proof}

\subsection{Profile decomposition for \texorpdfstring{\eqref{LWl}}{(LWl)} in \texorpdfstring{$H \times L^2$}{HxL2}}

Again, we fix $\ell \in \q V$ for the rest of this subsection.

Our goal is to derive a suitable notion of linear profile decomposition in the spirit of \cite{BG99}, adapted to our setting, in particular $L^\infty$ bounds.

Using transformation $\q T$, a notion of profile decomposition for the wave equation will immediately translate to a similar decomposition for \eqref{LWl}, but we will in fact improve it. We elaborate on this in what follows.

For $I$ an interval of $\m R$, we define 
\begin{gather} \label{def:S}
 \| \phi \|_{S_\ell(I)} := \left( \int_{t \in I} \int_{r=0}^{\infty} |\phi(t,r)|^{2+3/g'(\ell)} \frac{drdt}{r^2} \right)^{\frac{1}{2+3/g'(\ell)}}.
 \end{gather}
It is simply the norm of $\q T\phi$ in the Strichartz space $L^{2+3/g'(\ell)}_{t,r}(r^{1+2g'(\ell)}drdt)$, adap\-ted to the $\dot H^1$ critical wave equation in dimension $2 + 2g'(\ell)$ (we refer to \cite[Section 3]{CKM08} for further details).

\begin{lem}
Let $\ds \theta(\ell) = \frac{3}{4+6/g'(\ell)} \in (0,1)$. There exist $C >0$ such that for any finite energy solution $\vec \gamma$ to the linear wave equation \eqref{LWl}. 
\[ \| \gamma \|_{L^\infty_t (\m R, L^\infty_r)} \le  C \| \vec \gamma(0) \|_{H \times L^2}^{\theta(\ell)} \| \gamma \|_{S_\ell(\m R)}^{1-\theta(\ell)}. \]
\end{lem}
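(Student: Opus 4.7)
The plan is to pass through the transformation $\mathcal{T}\phi(r) = \phi(r)/r^{g'(\ell)}$ already used in this subsection, and view $\gamma$ as a radial solution $\varphi = \mathcal{T}\gamma$ of the free wave equation in $\mathbb{R}^{1+d}$ with $d = 2 + 2g'(\ell)$. In this dimension $q = 2 + 3/g'(\ell)$ is exactly the $\dot H^1$-critical Strichartz exponent $2(d+1)/(d-2)$, and $\|\gamma\|_{S_\ell}$ coincides with the standard space-time Strichartz norm $\|\varphi\|_{L^q_{t,r}(r^{d-1}drdt)}$ (see \cite[Section 3]{CKM08}). Under this identification the claim is a standard Gagliardo-Nirenberg type interpolation for radial $\dot H^1$-critical free-wave solutions combining the energy-based pointwise bound with the Strichartz $L^q$ bound.

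Two uniform pieces are immediate. The radial Sobolev embedding $r^{(d-2)/2}|\varphi(t,r)| \le C \|\varphi(t)\|_{\dot H^1}$, multiplied by $r^{g'(\ell)}$ and combined with conservation of the $H_\ell$ norm under \eqref{LWl}, gives the pointwise bound $\|\gamma\|_{L^\infty_{t,r}} \le C \|\vec\gamma(0)\|_{H \times L^2}$; independently, \cite[Section 3]{CKM08} supplies the Strichartz estimate $\|\gamma\|_{S_\ell} \le C \|\vec\gamma(0)\|_{H \times L^2}$. These two inequalities on their own already confirm that the claimed inequality is consistent for any $\theta \in [0,1]$; the content of the lemma is that the specific exponent $\theta(\ell)$ actually holds.

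To produce this sharper form, my plan is to perform a Littlewood-Paley decomposition $\gamma = \sum_N P_N \gamma$ in spatial frequency and bound each block by the minimum of two competing estimates. On the one hand, Bernstein in space-time (valid because a frequency-$N$ radial free wave has its space-time spectrum concentrated near the light cone $|\tau| \sim |\xi| \sim N$) combined with Strichartz gives $\|P_N \gamma\|_{L^\infty_{t,r}} \lesssim N^{(d+1)/q}\|P_N \gamma\|_{L^q_{t,r}(r^{d-1}drdt)} \lesssim N^{(d+1)/q} S$; on the other hand, the radial Sobolev embedding applied blockwise together with energy conservation and Parseval gives an energy-side control. Taking the minimum of the two at each $N$, optimizing the crossover frequency $N_* = N_*(E,S)$ for which the two bounds cross, and summing the resulting geometric series yields $\|\gamma\|_{L^\infty_{t,r}} \lesssim E^{\theta} S^{1-\theta}$; a direct calculation of the balance exponent recovers $\theta(\ell) = 3/(2q) = 3/(4 + 6/g'(\ell))$.

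The main technical obstacle is ensuring convergence of the high-frequency tail of the dyadic sum, since the naive radial Sobolev bound per block is uniform in $N$ and gives a divergent series when summed. Making the argument work requires using the improved radial Bernstein inequality $\|P_N f\|_{L^\infty(\mathbb{R}^d)} \lesssim N^{1/2}\|P_N f\|_{L^2(\mathbb{R}^d)}$ for radial $f$ (rather than the full-dimensional $N^{d/2}$ Bernstein), so that after energy Parseval the tail is summable in a weighted $\ell^2$ sense. Once this radial improvement is in hand the summation and balancing are routine, and the specific form of $\theta(\ell)$ drops out of the algebra.
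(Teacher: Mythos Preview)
Your approach via Littlewood--Paley decomposition is quite different from the paper's, and it contains a genuine gap. The paper's argument is entirely elementary and physical-space: after normalizing by scaling and time-translation so that $|\gamma(0,1)| \ge 2M/3$ where $M = \|\gamma\|_{L^\infty_{t,r}}$, one proves a modulus-of-continuity estimate of the form
\[ |\gamma(t,r) - \gamma(0,1)|^2 \le C\left(\ln r + \frac{t^2}{r-1}\right)\|\vec\gamma(0)\|_{H\times L^2}^2 \qquad (r > 1) \]
by integrating $\partial_r\gamma$ and $\partial_t\gamma$ along a broken path in space-time and applying Cauchy--Schwarz. This identifies a region of the shape $\{1 \le r \le 1+B,\ |t| \le \sqrt{B(r-1)}\}$ on which $|\gamma| \ge M/3$, with $B$ depending on $M/A$. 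Integrating the $S_\ell$-density over this region gives a lower bound on $\|\gamma\|_{S_\ell}$ in terms of $M$ and $A = \|\vec\gamma(0)\|_{H\times L^2}$, which rearranges to the desired inequality. No frequency decomposition is needed.

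Your proposed argument has a concrete error: the ``improved radial Bernstein inequality'' $\|P_N f\|_{L^\infty(\mathbb{R}^d)} \lesssim N^{1/2}\|P_N f\|_{L^2(\mathbb{R}^d)}$ for radial $f$ is false. A radial bump centered at the origin of width $N^{-1}$ has $L^\infty$ norm of order $1$ and $L^2$ norm of order $N^{-d/2}$, so the inequality fails for $d \ge 2$. The correct radial improvement is the weighted Strauss-type bound $r^{(d-1)/2}|P_N f(r)| \lesssim N^{1/2}\|P_N f\|_{L^2}$, but since $\gamma = r^{(d-2)/2}\varphi$ carries the weight $r^{(d-2)/2}$ rather than $r^{(d-1)/2}$, this weighted version does not translate into an improved unweighted $L^\infty$ bound on the blocks of $\gamma$; you only recover $\|P_N\gamma\|_{L^\infty_r} \lesssim \|P_N\varphi\|_{\dot H^1}$, which is exactly the naive energy bound whose $\ell^1$ sum over $N$ need not converge. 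Without a genuine high-frequency gain the balancing argument you outline breaks down. (The space-time Bernstein step is also imprecise as stated, since the space-time Fourier support of a free wave lies on the light cone rather than in a full $(d{+}1)$-dimensional ball, but the radial Bernstein error is the decisive obstruction.)
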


\begin{proof}
As $\vec \gamma$ is a finite energy solution to the linear equation \eqref{LWl}, all terms in the desired estimate are finite.

Denote $M = \| \gamma \|_{L^\infty_t (R, L^\infty_r)}$ and $A = \| \gamma(0) \|_H$. Notice that $\| \vec \gamma(t) \|_{H \times L^2}$ is bounded below and above by $\| \vec \gamma(0) \|_{H \times L^2}$ (because it is equivalent to the conserved quantity $\| \vec \gamma (t) \|_{H_\ell \times L^2}$). It follows that  for some $K$ only depending on $\ell$, $M \le KA$.

As all the functional space under consideration are invariant under scaling and time translation, we can assume that  $|\gamma(0,1)| \ge 2M/3$.

Let $s,t \in \m R$, and $r \ge q \ge  0$. Then for any $\rho \in [q,r]$ we have
\begin{align*}
| \gamma(t,r) - \gamma(s,q)|^2 & \le \left( \int_q^{\rho} |\partial_r \gamma(s,r')| dr' + \int_{[s,t]} | \partial_t \gamma(t',\rho)| dt' +  \int_{\rho}^r | \partial_r \gamma(t,r')| dr' \right)^2 \\
& \le 3 \ln \frac{\rho}{q} \int_q^{\rho} |\partial_r \gamma(s,r')|^2 r' dr' + 3 \frac{|t-s|}{\rho} \int_{[s,t]} | \partial_t \gamma(t',\rho)|^2 \rho dt' \\
&\qquad  + 3 \ln\frac{r}{\rho} \int_{\rho}^r | \partial_r \gamma(t,r')|^2  r'dr'  \\
& \le 3 \ln \frac{r}{q} \| \gamma(s) \|_H^2 + 3 \frac{|t-s|}{q} \int_{[s,t]} | \partial_t \gamma(t',\rho)|^2 \rho dt' 
\end{align*}
After averaging in $\rho \in [q,r]$, it transpires
\begin{align*}
| \gamma(t,r) - \gamma(s,q)|^2 & \le C \ln \frac{r}{q} \| \vec \gamma(0) \|_{H\times L^2}^2 + 3 \frac{|t-s|}{q|r-q|} \int_q^r \int_{[s,t]} | \partial_t \gamma(t',\rho)|^2 \rho dt' d\rho \\
& \le C \left( \ln \frac{r}{q} + \frac{|t-s|^2}{q|r-q|} \right)  \| \vec \gamma(0) \|_{H\times L^2}^2.
\end{align*}
Pick $q=1$, $s=0$, and fix $B = \max (K/3C,2)$. We deduce, for all $r \in [1,1+B]$ and $|t| \le \sqrt{B(r-1)}$,
\[ | \gamma(t,r)| \ge 2M/3 - CA \sqrt{\ln (1+B) + B} \ge 2M/3 - A BC \ge M/3. \]
$B = M/(3AC) \le 1/3C$ universal constant. Hence
\begin{align*}
\| \gamma \|_{S(\m R)}^{2+3/g'(\ell)} &  \ge \int_1^{1+B} \int_{|t| \le \sqrt{B(r-1)}} ( M/3)^{2+3/g'(\ell)} \frac{drdt}{r^2} \\
& \ge (M/3)^{2+3/g'(\ell)} \int_1^{1+B} \frac{\sqrt{B(r-1)}}{r^2} dr \ge C B^{3/2}  (M/3)^{2+3/g'(\ell)} \\
& \ge \frac{M^{7/2+3/g'(\ell)}}{C A^{3/2}}.
\end{align*}
We can conclude, with $\ds \theta = \frac{3}{4+6/g'(\ell)}$,  $M \le C A^{\theta} \| \gamma \|_{S(\m R)}^{1-\theta}$.
\end{proof}

From this, the profile decomposition for \eqref{LWl} takes the following form.

\begin{thm}[Profile decomposition] \label{th:prof_decomp}
Let $\vec \psi_n$ a bounded sequence of $H \times L^2$. Then there exists a sequence of scales $(t_{j,n},\lambda_{j,n})_n$ and linear profiles $\vec V_{j,L}$ solutions to \eqref{LWl}, such that, up to a subsequence that we still denote $\vec \psi_n$, we have for all $J\ge 1$
\[ \vec \psi_n(r) = \sum_{j=1}^J \left( V_{j,L} \left( - \frac{t_{j,n}}{\lambda_{j,n}}, \frac{r}{\lambda_{j,n}} \right), \frac{1}{\lambda_{j,n}} \partial_t V_{j,L} \left( - \frac{t_{j,n}}{\lambda_{j,n}}, \frac{r}{\lambda_{j,n}} \right) \right) + \vec \gamma_{J,n}(0), \]
where $\vec \gamma_{J,n}$ is a solution to \eqref{LWl} which satisfies
\[ \limsup_n \| \gamma_{J,n} \|_{S_\ell(\m R)} + \| \gamma_{J,n} \|_{L^\infty_{t,r}} \to 0 \quad \text{as} \quad J \to + \infty, \]
 and for all $j$,
\[ \forall n, t_{j,n} =0 \quad \text{ or } \quad \left( \frac{t_{j,n}}{\lambda_{j,n}} \text{ has a limit which is } +\infty \text{ or } -\infty \right), \]
and if $j \ne k$,
\[ \frac{\lambda_{j,n}}{\lambda_{k,n}} + \frac{\lambda_{k,n}}{\lambda_{j,n}} \to + \infty, \quad \text{or} \quad \left( \forall n, \lambda_{j,n} = \lambda_{k,n} \text{ and } \frac{|t_{j,n} - t_{k,n}|}{\lambda_{j,n}} \to +\infty \right). \]
Furthermore, there hold
\begin{enumerate}
\item Pythagorean expansion (and equipartition) of the $H_\ell \times L^2$ norm: for all fixed $J$, we have as $n \to +\infty$
\begin{align*} 
\|  \psi_{0,n} \|_{H_\ell}^2 & =  \sum_{j=1}^J \left\| V_{j,L} \left( - \frac{t_{j,n}}{\lambda_{j,n}}, \frac{r}{\lambda_{j,n}} \right) \right\|^2_{H_\ell} + \| \gamma_{J,n}(0,r) \|^2_{H_\ell} + o_{n}(1), \\
\|  \psi_{1,n} \|_{L^2}^2 & =  \sum_{j=1}^J \left\| \frac{1}{\lambda_{j,n}} \partial_t V_{j,L} \left( - \frac{t_{j,n}}{\lambda_{j,n}}, \frac{r}{\lambda_{j,n}} \right) \right\|^2_{L^2} + \| \partial_t \gamma_{J,n}(0,r) \|^2_{H_\ell} + o_{n}(1).
\end{align*}
\item Pythagorean expansion of the energy: for all $J$ fixed, we have as $n \to +\infty$
\begin{equation*}
E(\vec \psi_n) = \sum_{j=1}^J E \left( V_{j,L} \left( - \frac{t_{j,n}}{\lambda_{j,n}} \right), \frac{1}{\lambda_{j,n}} \partial_t  V_{j,L} \left( - \frac{t_{j,n}}{\lambda_{j,n}} \right) \right) + E(\vec \gamma_{J,n}(0)) + o(1).
\end{equation*}
\item $L^\infty$ profile selection:  $\| \psi_n \|_{L^\infty}$ has a limit as $n \to +\infty$ and exists
\begin{equation} \label{sup_psi_n}
\lim_n \| \psi_n \|_{L^\infty} = \sup_{j \in \q J_0}  \| V_j(0) \|_{L^\infty} \quad \text{where} \quad  \q J_0 := \{ j \ge 1 \mid  \forall n, \ t_{j,n} =0 \}.
\end{equation}
(with the convention that the $\sup$ is 0 if $\q J_0$ is empty).
\end{enumerate}
\end{thm}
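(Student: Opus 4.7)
The plan is to import a standard Bahouri--Gérard profile decomposition for the wave equation and then upgrade it with the $L^\infty$ refinements specific to this statement. Set $d := 2 + 2g'(\ell)$. The transformation $\q T$ introduced above is a bicontinuous bijection identifying $H_\ell \times L^2$ isometrically with $\dot H^1(\m R^d) \times L^2(\m R^d)$, and $S_\ell$ with the standard $\dot H^1$-critical Strichartz norm for the radial wave equation in $\m R^d$, while conjugating \eqref{LWl} to the free radial wave equation \eqref{LWkD}. Applying the Bahouri--Gérard decomposition \cite{BG99} to the sequence $\q T\vec\psi_n$ and pulling back through $\q T^{-1}$ yields the scales $\lambda_{j,n}$, cores $t_{j,n}/\lambda_{j,n}$, linear profiles $V_{j,L}$, the pseudo-orthogonality conditions, the Pythagorean expansion (1) of the $H_\ell \times L^2$ norm, and smallness of $\gamma_{J,n}$ in $S_\ell(\m R)$ as $J\to\infty$, uniformly in $n$. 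Equipartition of the time and space parts in (1) follows, after a further subsequence, by splitting each profile into outgoing and incoming parts as in Proposition~\ref{prop:light_cone_en_conc}.

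Next I would upgrade the $S_\ell$ smallness of $\gamma_{J,n}$ to $L^\infty$ smallness. By (1) and the uniform $H\times L^2$ bound on $\vec\psi_n$, the norms $\|\vec\gamma_{J,n}(0)\|_{H\times L^2}$ are uniformly bounded in $(J,n)$. Inserting this into the preceding lemma,
\[ \|\gamma_{J,n}\|_{L^\infty_{t,r}} \le C\,\|\vec\gamma_{J,n}(0)\|_{H\times L^2}^{\theta(\ell)}\,\|\gamma_{J,n}\|_{S_\ell(\m R)}^{1-\theta(\ell)}, \]
the right-hand side tends to $0$ as $J\to\infty$, uniformly in $n$. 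The Pythagorean expansion (2) of the nonlinear energy then follows from (1) together with this $L^\infty$ control: the potential $|g(\phi_0)|^2/r^2$ is smooth and Lipschitz in $\phi_0$ on bounded sets, the rescaled profiles are asymptotically supported (in the $L^\infty$ sense) on disjoint dyadic annuli in $r$ by pseudo-orthogonality of scales, so cross terms are negligible and the potential part splits, matching the quadratic expansion.

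The main obstacle is the $L^\infty$ profile selection (3). For $j\notin\q J_0$, i.e. $|t_{j,n}/\lambda_{j,n}|\to\infty$, a localized version of the lemma (obtained by replaying its proof on a bounded time window around $-t_{j,n}/\lambda_{j,n}$) gives $\|V_{j,L}(-t_{j,n}/\lambda_{j,n})\|_{L^\infty} \lesssim \|V_{j,L}(0)\|_{H\times L^2}^{\theta(\ell)}\,\|V_{j,L}\|_{S_\ell(|s+t_{j,n}/\lambda_{j,n}|\le 1)}^{1-\theta(\ell)}$, and since $\|V_{j,L}\|_{S_\ell(\m R)}<\infty$ by Strichartz, the Strichartz norm on the shifted window vanishes, yielding linear dispersion. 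For $j\in\q J_0$, $V_{j,L}(0)\in H$ is continuous (through $\q T$, $d\ge 4$) with $V_{j,L}(0,r)\to 0$ at $r=0$ and $r=\infty$. The lower bound in \eqref{sup_psi_n} is then obtained by evaluating $\psi_n$ at the rescaled maximizer of a fixed $V_{j,L}(0)$, $j\in\q J_0$: the targeted profile contributes its $L^\infty$ norm, all other $\q J_0$-profiles vanish by scale-orthogonality (being evaluated near $0$ or $\infty$), the $j\notin\q J_0$ profiles vanish by dispersion, and $\gamma_{J,n}$ is controlled by taking $J$ large. For the matching upper bound, one extracts $r_n^*$ with $|\psi_n(r_n^*)|\ge\|\psi_n\|_{L^\infty}-1/n$ and passes to a subsequence so that each ratio $r_n^*/\lambda_{j,n}$ has a limit in $[0,+\infty]$; pseudo-orthogonality forces at most one index to have a limit in $(0,\infty)$, and a case analysis parallel to the lower bound yields $|\psi_n(r_n^*)|\le\sup_{j\in\q J_0}\|V_{j,L}(0)\|_{L^\infty}+o(1)$.
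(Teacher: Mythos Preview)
Your proposal is correct and follows essentially the same route as the paper: import Bahouri--G\'erard via the transformation $\q T$, cite \cite{CKLS13a} for the Pythagorean expansions, then use the preceding interpolation lemma to upgrade the $S_\ell$ smallness of $\gamma_{J,n}$ to $L^\infty$ smallness, and finally run the two-sided $L^\infty$ comparison for (3). The paper's argument for (3) is organized slightly differently (it first disposes of the case $\liminf_n\|\psi_n\|_{L^\infty}=0$ separately, and for the upper bound it bounds the finite sum $\sum_{j\le J,\,j\in\q J_0}|V_j(0,r_n/\lambda_{j,n})|$ directly rather than extracting limits of the ratios $r_n^*/\lambda_{j,n}$), but the content is the same.
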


\begin{proof}
This is essentially contained in \cite[Main Theorem]{BG99}. We refer to \cite[Corollary 2.15]{CKLS13a}  for the profile decomposition in the wave map context and the Pythagorean expansion of the $H \times L^2$ norm, and to \cite[Lemma 2.16]{CKLS13a} for the Pythagorean expansion of the energy.

The only extra points with respect to the usual profile decomposition are
\[  \| \gamma_{J,n} \|_{L^\infty_{t,r} } \to 0 \]
 and the $L^\infty$ profile selection. For the former, the Pythagorean expansion ensures that $\| \vec\gamma_{J,n}(0) \|_{H \times L^2}$ is a bounded sequence, hence this follows from the previous Lemma and
\[ \limsup_n \| \gamma_{J,n} \|_{S_\ell(\m R)} \to 0.  \]

For the latter, let $r_n$ such that
\[ |\psi_n(r_n) - \| \psi_n \|_{L^\infty} | \le \frac{1}{n}. \]
First assume $\liminf_n \| \psi_n \|_{L^\infty} >0$, and let $\e >0$ such that $ 2\e <\liminf_n \| \psi_n \|_{L^\infty}$. Choose $J$ so large that $\| \gamma_{J,n} \|_{L^\infty_{t,r}} \le \e$ for $n$ large enough. Then consider $j \le J$. If $j \notin \q J_0$, then $|t_{j,n}|/\lambda_{j,n} \to +\infty$ so that $\|  V_{j} \left( - \frac{t_{j,n}}{\lambda_{j,n}} \right) \|_{L^\infty} \to 0$. Hence
\[ |\psi_n(r_n)| \le \sum_{j \le J, j \in \q J_0} \left| V_j \left( 0,\frac{r_n}{\lambda_{j,n}} \right)  \right| + o_n(1) + \e. \]
In particular $\q J_0 \ne \emptyset$. As $\left| \ln \frac{\lambda_{j,n}}{\lambda_{k,n}} \right| \to +\infty$, we see that
\[ \lim_n \left\| \sum_{j \le J, j \in \q J_0} V_j \left( 0,\frac{r_n}{\lambda_{j,n}} \right)  \right\|_{L^\infty} = \max \{ \| V_j(0) \|_{L^\infty} \mid j \le J, j \in \q J_0\}. \]
This shows that
\[ \limsup_n   \| \psi_n \|_{L^\infty}  \le \sup \left\{ \| V_j(0) \|_{L^\infty} \mid j \in \q J_0 \right\}. \]

For the reverse inequality, first notice that as $j \to +\infty$, $\| V_j(0) \|_{H} \to 0$  (due to the Pythagorean expansion of the energy), and so $\| V_j(0) \|_{L^\infty} \to 0$. Hence there exists $j_0$ such that
\[ \sup \left\{ \| V_j(0) \|_{L^\infty} \mid j \le J, j \in \q J_0 \right\} = \| V_{j_0} \|_{L^\infty}. \]
Also, as $V_{j_0}(0)$ is continuous and tend to 0 at 0 and $+\infty$, there exists $r_0>0$ such that
\[ |V_{j_0}(0,r_0) |=  \sup \left\{ \| V_j(0) \|_{L^\infty} \mid j \in \q J_0 \right\}. \]
Consider the sequence $\psi_n(\lambda_{j_0,n} r_0)$. Then we have the expansion
\[ \psi_n(\lambda_{j_0,n} r_0) = V_{j_0}(0,r_0) + \sum_{j \le J, j \in \q J_0 \setminus \{ j_0 \} } V_j \left( 0,\frac{r_n}{\lambda_{j,n}} \right)  + \gamma_{J,n}(0, \lambda_{j_{0,n}} r_0) . \]
Again due to orthogonality of the profiles, we see that
\[ \liminf_n \| \psi_n \|_{L^\infty} \ge \liminf_n |\psi_n(\lambda_{j_0,n} r_0)| \to |V_{j_0}(0,r_0)|. \]
If $\liminf_n \| \psi_n \|_{L^\infty} =0$, then choosing a subsequence such that $\psi_{\sigma(n)} \to 0$ in $L^\infty$, and arguing as previously, we see that $\q J_0 = \varnothing$. Therefore $\| \psi_n - \gamma_{J,n} \|_{L^\infty} \to 0$ for all $J$, and so $\| \psi_n \|_{L^\infty} \to 0$. The desired equality also holds in this case.
\end{proof}

\begin{prop}[{Pythagorean expansion with cut-off, \cite[Corollary 8]{CKS13}}] \label{prop:pyth_cutoff}
We use the notation of the previous Proposition. Fix $J \ge 1$ and  Let $0 \le r_n \le s_n \le +\infty$ be two sequences. Then we have the expansion:
\begin{align*} 
\MoveEqLeft \|  \vec \psi_{n} \|_{H_\ell \times L^2(r_n \le r \le s_n)}^2 \\
& =  \sum_{j=1}^J \left\| \left( V_{j,\mr L} \left( - \frac{t_{j,n}}{\lambda_{j,n}}, \frac{r}{\lambda_{j,n}} \right), \frac{1}{\lambda_{j,n}} \partial_t V_{j,\mr L} \left( - \frac{t_{j,n}}{\lambda_{j,n}}, \frac{r}{\lambda_{j,n}} \right) \right) \right\|^2_{H_\ell \times L^2(r_n \le r \le s_n)}  \\
& \quad + \| \vec \gamma_{J,n}(0,r) \|^2_{H_\ell(r_n \le r \le s_n))} + o_{n}(1).
\end{align*}
\end{prop}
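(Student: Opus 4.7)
The plan is to expand the squared $H_\ell\times L^2$ norm directly: writing
\[
\vec V_{j,n}(r) = \left(V_{j,\mr L}\!\left(-\tfrac{t_{j,n}}{\lambda_{j,n}},\tfrac{r}{\lambda_{j,n}}\right),\tfrac{1}{\lambda_{j,n}}\partial_t V_{j,\mr L}\!\left(-\tfrac{t_{j,n}}{\lambda_{j,n}},\tfrac{r}{\lambda_{j,n}}\right)\right),
\]
the profile decomposition gives $\vec\psi_n = \sum_{j=1}^J \vec V_{j,n} + \vec\gamma_{J,n}(0)$. Plugging this into the cut-off squared norm produces the desired diagonal terms $\sum_j \|\vec V_{j,n}\|^2_{H_\ell\times L^2(r_n\le r\le s_n)} + \|\vec\gamma_{J,n}(0)\|^2_{H_\ell\times L^2(r_n\le r\le s_n)}$ plus the cross terms $2\sum_{j\neq k}\langle \vec V_{j,n},\vec V_{k,n}\rangle$ and $2\sum_j \langle \vec V_{j,n},\vec\gamma_{J,n}(0)\rangle$, all inner products being taken in $H_\ell\times L^2$ on $\{r_n\le r\le s_n\}$. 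The whole task reduces to showing each cross term is $o_n(1)$.

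For the cross term between two profiles $j\neq k$, I would rescale to put profile $j$ at scale $1$ by the change of variable $r = \lambda_{j,n}\rho$, which is an isometry of $H_\ell\times L^2$. After rescaling, the cut-off becomes $\mathbf{1}_{[a_n,b_n]}$ with $a_n=r_n/\lambda_{j,n}$, $b_n=s_n/\lambda_{j,n}$; passing to a subsequence we may assume $a_n\to a\in[0,+\infty]$, $b_n\to b\in[0,+\infty]$, $-t_{j,n}/\lambda_{j,n}\to\tau_j\in[-\infty,+\infty]$. Then $\vec V_{j,n}$ rescaled converges in $H_\ell\times L^2$ (strongly on compacts of $(0,\infty)$) to $\vec V_{j,\mr L}(\tau_j,\cdot)$ (or to $0$ in the dispersive sense if $|\tau_j|=\infty$, by Proposition \ref{prop:light_cone_en_conc} applied to concentrate the energy on the light cone which is at $|\rho|\to\infty$). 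The rescaled $\vec V_{k,n}$, by the orthogonality hypothesis ($\lambda_{k,n}/\lambda_{j,n}\to 0$ or $+\infty$, or equal scales with $|t_{j,n}-t_{k,n}|/\lambda_{j,n}\to+\infty$), converges weakly to $0$ in $H_\ell\times L^2$ on any fixed annulus, again using Proposition \ref{prop:light_cone_en_conc} in the case of equal scales with diverging time separation. Multiplying by the cut-off $\mathbf{1}_{[a,b]}$ preserves weak convergence to $0$ after composition with the gradient (we keep the $H^1$ inner product structure untouched), so the inner product vanishes.

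For the cross term $\langle \vec V_{j,n},\vec\gamma_{J,n}(0)\rangle$ on $\{r_n\le r\le s_n\}$, the same rescaling argument applies: after setting $r=\lambda_{j,n}\rho$ and passing to subsequence so that the rescaled cut-off converges a.e., the rescaled $\vec\gamma_{J,n}(0,\lambda_{j,n}\cdot)$ converges weakly to $0$ in $H_\ell\times L^2$ by the very construction of the profile decomposition (this is how the $V_{j,\mr L}$ are selected as weak limits), and the rescaled $\vec V_{j,n}$ converges strongly on each compact set of $(0,\infty)$. Hence the inner product tends to $0$. Summing over $j$ and $k$ (both ranging in a finite set for fixed $J$) shows all cross terms are $o_n(1)$, which completes the expansion.

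The only genuine difficulty is that the cut-off $\mathbf{1}_{[r_n,s_n]}$ is a moving, non-smooth multiplier, so it cannot be absorbed naively into a weak-limit argument in $H^1$. I would handle this by reducing along subsequences to the case of a fixed indicator $\mathbf{1}_{[a,b]}$ (with $0\le a\le b\le+\infty$) after rescaling — this is legitimate since the conclusion is phrased as a limit and every subsequential limit is shown to vanish — and then using that the pairing $\langle f, g\rangle_{H_\ell\times L^2(\{a\le r\le b\})}$ is continuous with respect to strong convergence of $f$ and weak convergence of $g$, both realized in the argument above. The equivalence of $H_\ell$ and $H$ norms, already noted after the definition of $\q T$, ensures all steps can be carried out in either norm.
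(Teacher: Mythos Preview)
The paper's proof is essentially a citation: it invokes \cite[Corollary 8]{CKS13} for the one-sided case $r_n = 0$, then observes that the two-sided expansion follows by subtracting the expansion with cut-off $r \le r_n$ from the one with cut-off $r \le s_n$. Your approach is different in that you attempt a direct proof, expanding the square and arguing that all cross terms vanish; this amounts to reproving the cited result rather than invoking it. The overall scheme (rescale to put one profile at unit scale, pass to a subsequence so that the rescaled cut-off interval stabilizes, pair a strongly convergent factor against a weakly null one) is indeed how such statements are established.

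There is, however, a genuine gap in your execution for profiles with $|t_{j,n}|/\lambda_{j,n} \to \infty$. You correctly note, via Proposition~\ref{prop:light_cone_en_conc}, that the rescaled profile then concentrates on the moving annulus $\rho \sim |\tau_{j,n}| \to \infty$ and tends to $0$ on fixed compacts. But your concluding paragraph invokes ``strong convergence of $f$'' to pass the cut-off pairing to the limit, and that strong convergence is simply unavailable here: the rescaled profile has constant $H_\ell \times L^2$ norm and no strong limit on any region containing its mass. For the profile--profile cross terms one can rescue the argument by localizing both profiles near their respective light cones and using the orthogonality of parameters to separate them. For the profile--remainder cross term the issue is sharper: weak convergence to $0$ of the rescaled remainder gives no control on the moving annulus $[|\tau_{j,n}|-A,|\tau_{j,n}|+A]$. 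The natural fix is to first apply the linear flow $S_\ell(\tau_{j,n})$ so that the profile becomes the fixed $\vec V_{j,\mr L}(0,\cdot)$ while the evolved-and-rescaled remainder still goes weakly to $0$ (this is how the profiles are extracted), but the linear flow does not commute with multiplication by a sharp spatial indicator, so an additional argument---smooth approximation of the cut-off, or the explicit radiation asymptotics of the radial linear wave equation---is needed. That is exactly the work done in \cite{CKS13}, and it is not bypassed by the sketch you give.
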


\begin{proof}
This is the content of \cite[Corollary 8]{CKS13}. The proof there is done for one sequence i.e. $r_n =0$. 
To derive the above expansion, it suffices to do the difference between the expansions with cut-off $r \le s_n$ and $r \le r_n$.
\end{proof}

We will use several times the following simple remark.

\begin{cor} \label{cor:psi_dispersion}
Let $\vec\psi_n$ be a bounded sequence in $H \times L^2$. Assume furthermore that $\| \psi_{0,n} \|_{L^\infty} \to 0$ and $\| \psi_{1,n} \|_{L^2} \to 0$. Denote $\vec \psi_{n,L}$ the linear evolution (to \eqref{LWl}) with data $\vec \psi_n$ at time $0$, then
\[ \| \psi_{n,L} \|_{S(\m R)} \to 0. \]
\end{cor}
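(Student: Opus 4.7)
The plan is to apply the profile decomposition of Theorem \ref{th:prof_decomp} to $\vec\psi_n$ along a subsequence and show that every extracted profile $V_{j,L}$ must vanish identically. Once this is established, $\vec\psi_{n,L}$ coincides with the linear evolution of the remainder $\vec\gamma_{J,n}$, whose $S_\ell$ norm can be made arbitrarily small by choosing $J$ large.

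First I would split the profiles into two cases. For $j\in\q J_0$ (with $t_{j,n}=0$), the $L^\infty$ profile selection in part (3) of Theorem \ref{th:prof_decomp}, combined with $\|\psi_{0,n}\|_{L^\infty}\to 0$, forces $\|V_j(0)\|_{L^\infty}=0$, hence $V_j(0)\equiv 0$ by continuity. The Pythagorean expansion (1) applied to $\|\psi_{1,n}\|_{L^2}^2\to 0$, together with the scale invariance of the $L^2(r\,dr)$ norm under $(r,f)\mapsto(r/\lambda,f/\lambda)$, then yields $\|\partial_t V_j(0)\|_{L^2}=0$. Therefore $\vec V_j(0)=0$ and the profile is trivial.

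For $j\notin\q J_0$, we have $|t_{j,n}|/\lambda_{j,n}\to+\infty$. The same Pythagorean expansion (with the same scaling trick) gives
\[ \|\partial_t V_{j,L}(-t_{j,n}/\lambda_{j,n})\|_{L^2(r\,dr)}\longrightarrow 0. \]
Here the equipartition of energy from Proposition \ref{prop:light_cone_en_conc} identifies the limit as $|t|\to\infty$ of $\|\partial_t V_{j,L}(t)\|_{L^2}^2$ with $\tfrac{1}{2}\|\vec V_{j,L}(0)\|_{H_\ell\times L^2}^2$, so the total energy of the profile vanishes and once again $V_{j,L}\equiv 0$. I expect this to be the main obstacle: one must convert a smallness statement at the asymptotic times $-t_{j,n}/\lambda_{j,n}$ into a smallness statement on the profile's initial data, which is precisely what equipartition delivers and is the reason a pure $L^\infty$ hypothesis on $\psi_{0,n}$ alone would not suffice.

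Once every profile is trivial, the decomposition reads $\vec\psi_n=\vec\gamma_{J,n}(0)$ for each $J$, so by uniqueness of the linear flow $\vec\psi_{n,L}(t)=\vec\gamma_{J,n}(t)$. Given $\varepsilon>0$, I choose $J$ so that $\limsup_n\|\gamma_{J,n}\|_{S_\ell(\m R)}<\varepsilon$ (available from Theorem \ref{th:prof_decomp}), which gives $\limsup_n\|\psi_{n,L}\|_{S_\ell(\m R)}<\varepsilon$. Since the subsequence extracted at the outset was arbitrary, a standard subsequence argument promotes the conclusion to the full sequence.
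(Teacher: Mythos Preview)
Your proof is correct and follows essentially the same approach as the paper: both argue via the profile decomposition of Theorem \ref{th:prof_decomp}, use the Pythagorean expansion of $\|\psi_{1,n}\|_{L^2}^2$ together with equipartition (Proposition \ref{prop:light_cone_en_conc}) to kill the profiles with $|t_{j,n}|/\lambda_{j,n}\to\infty$, and then invoke the $L^\infty$ profile selection \eqref{sup_psi_n} to eliminate the remaining profiles with $t_{j,n}=0$. The only cosmetic difference is the order in which the two cases are treated, and you make the final subsequence argument explicit whereas the paper leaves it implicit.
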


\begin{proof}
It suffices to prove that any profile decomposition has no non-trivial profiles. Consider such a decomposition with profiles $(\vec V_{j,L})_j$ and parameters $(t_{j,n},\lambda_{j,n})$, and remainder $\vec \gamma_{J,n}$. Recall equipartition of the energy: if $|t_{j,n}|/\lambda_{j,n} \to +\infty$, then
\[ \left\| \frac{1}{\lambda_{j,n}} \partial_t V_{j} \left( -\frac{t_{j,n}}{\lambda_{j,n}} \right) \right \|_{L^2}^2 \to \frac{1}{2} \| \vec V_j \|_{H_\ell \times L^2}^2. \]
Hence, from Pythagorean expansion of the energy, it follows that all non-trivial profile $V_{j,L}$ must satisfy $t_{j,n} =0$ and $\partial_t V_{j,L}(0) =0$. But as $\| \psi_n \|_{L^\infty} \to 0$, equality \eqref{sup_psi_n} shows that $\| V_{j,L}(0) \|_{L^\infty} =0$, hence $V_{j,L} =0$ and $\vec V_{j,L} =0$. 

Therefore, all profiles $\vec V_{j,L}$ are trivial, and $\vec \psi_n$ is the remainder term $\vec \gamma_{J,n}$ (which does not depend on $J$), which satisfy the desired dispersion property.
\end{proof}

We recall the notion of nonlinear profile. If $\vec V$ is a solution of the linear equation \eqref{LWl} and $T \in \overline{\m R}$, there exists a unique wave map $U$ solution to \eqref{wm}, defined on a neighborhood of $T$ and satisfying $U(0)= U(\infty) =\ell$ and
\[ \| \vec U(t) - \vec V(t) - (\ell,0) \|_{H \times L^2} \to 0 \quad \text{as} \quad t \to T. \]
Notice that if $T=+\infty$, then $U$ is defined on some interval $[T_0,+\infty)$ and scatters at $+\infty$, i.e
\[ \| U  - \ell \|_{S([T_0, +\infty))} < +\infty. \]
This notion follows from local well posedness \cite[Theorem 2]{CKM08}; we refer to \cite{KM06,KM08} for further details.

\begin{prop}[Evolution of the decomposition] \label{prop:evol_decomp}
Let $\vec \psi_n$ be a sequence of wave maps such that 
\[ (\vec \psi_n(0) - (\ell,0))_n \text{ admits a profile decomposition  in the sense of  Theorem \ref{th:prof_decomp},} \]
from which we use the notations.

Denote $U_{j}$ nonlinear profiles associated to $\left( \vec V_{j}, - \lim_n \frac{t_{j,n}}{\lambda_{j,n}} \right)$. 
Let $t_n$ be such that
\[ \forall J \ge 1, \quad \frac{t_n - t_{j,n}}{\lambda_{j,n}} < T^+(U_j), \quad \text{and} \quad \sup_n \| U_j - \ell \|_{S([-\frac{t_{j,n}}{\lambda_{j,n}}, \frac{t_n - t_{j,n}}{\lambda_{j,n}}])} < + \infty. \]
Then for $n$ large enough, $T^+(\vec \psi_n) > t_n$, and for all $t \in [0,t_n]$,
\begin{align*}
\vec \psi_n(t) - (\ell,0) & =  \sum_{j=1}^J \left( \left( U_{j} \left( \frac{t - t_{j,n}}{\lambda_{j,n}}, \frac{r}{\lambda_{j,n}} \right), \frac{1}{\lambda_{j,n}} \partial_t U_{j} \left( - \frac{t_{j,n}}{\lambda_{j,n}}, \frac{r}{\lambda_{j,n}} \right) \right)  - (\ell,0) \right) \\\
& \quad + \vec \gamma_{J,n}(t,r) + \vec r_{n,J}(t,r),
\end{align*}
where $\limsup_{n \to +\infty} \| r_{J,n} \|_{S_\ell([0,t_n])} + \| \vec r_{J,n} \|_{L^\infty([0,t_n], H \times L^2)} \to 0$ as $J \to +\infty$.
\end{prop}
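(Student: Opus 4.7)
The plan is the standard Bahouri--Gérard type scheme: construct an explicit approximate solution out of the nonlinear profiles together with the linear remainder, verify that the initial data essentially match $\vec\psi_n(0)$, show that the approximate solution solves \eqref{wm} up to an $S_\ell$-small source term, and finally invoke a perturbation lemma for \eqref{wm} (as in \cite[Section 3]{CKM08}) to transfer closeness from the approximate solution to $\vec\psi_n$ itself on $[0,t_n]$.

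Concretely, for fixed $J\ge 1$ I would define
\[
u_{J,n}(t,r) := \ell + \sum_{j=1}^J \left( U_j\!\left( \tfrac{t-t_{j,n}}{\lambda_{j,n}}, \tfrac{r}{\lambda_{j,n}} \right) - \ell \right) + \gamma_{J,n}(t,r),
\]
with time derivative defined analogously, so that $\vec u_{J,n}(0) = \vec\psi_n(0)$ by construction of the $U_j$ and the fact that $\vec V_{j,L}-\vec U_j \to 0$ in $H\times L^2$ at the initial time $-t_{j,n}/\lambda_{j,n}$. The first task is to bound $\|u_{J,n}-\ell\|_{S_\ell([0,t_n])}$ uniformly in $n,J$: each rescaled nonlinear profile has its $S_\ell$-norm bounded by hypothesis on $U_j$, the tail $\sum_{j>J}$ is absorbed into the remainder by the small-$S_\ell$ property of $\gamma_{J,n}$ together with the Pythagorean expansion of energy (which forces $\sum_j E(V_{j,L})<\infty$ and hence, for $j$ large, small-data scattering so that $\|U_j-\ell\|_{S_\ell}\lesssim \|V_{j,L}\|_{S_\ell}$). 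Orthogonality of the parameters $(t_{j,n},\lambda_{j,n})$ yields the usual decoupling in $S_\ell$: cross products between different profiles vanish as $n\to\infty$, so
\[
\bigl\| u_{J,n}-\ell\bigr\|_{S_\ell([0,t_n])}^{2+3/g'(\ell)} = \sum_{j=1}^J \|U_j-\ell\|_{S_\ell}^{2+3/g'(\ell)} + o_n(1) + O_J(\|\gamma_{J,n}\|_{S_\ell}).
\]

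Next I would compute $e_{J,n} := \Box u_{J,n} + f(u_{J,n})/r^2$ in the coordinates of \eqref{wm}. Since each $U_j$ solves \eqref{wm} and $\gamma_{J,n}$ solves \eqref{LWl}, the error reduces to a nonlinear defect
\[
e_{J,n} = \frac{1}{r^2}\left( f(u_{J,n}) - \sum_{j=1}^J f\!\left(U_j(\cdots)\right)\!\big|_{\text{rescaled}} - g'(\ell)^2\,\gamma_{J,n} \right).
\]
Writing $f(\psi) = g'(\ell)^2(\psi-\ell) + h(\psi-\ell)$ with $h$ vanishing to second order at $0$, this defect splits into (a) interaction terms between pairs of distinct nonlinear profiles, (b) interaction terms between nonlinear profiles and $\gamma_{J,n}$, and (c) a purely $\gamma_{J,n}$ quadratic piece. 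Piece (a) is controlled via orthogonality of scales/time shifts (change variables to one profile, the others disperse to zero pointwise); piece (c) is estimated in the Strichartz-dual norm by Hölder, using the smallness of $\|\gamma_{J,n}\|_{L^\infty_{t,r}}+\|\gamma_{J,n}\|_{S_\ell}$ guaranteed by Theorem~\ref{th:prof_decomp}; piece (b) is treated by the same Hölder bound combined with the uniform $S_\ell$ bound on the nonlinear profiles and the $L^\infty$-smallness of $\gamma_{J,n}$. Altogether one gets $\|e_{J,n}\|_{N_\ell([0,t_n])} \to 0$ as first $n\to\infty$, then $J\to\infty$, where $N_\ell$ is the Strichartz-dual norm appearing in the perturbation lemma.

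Applying the perturbation lemma from the local theory \cite[Theorem 2]{CKM08} to the exact solution $\vec\psi_n$ and the approximate solution $\vec u_{J,n}$, whose initial data agree and whose $S_\ell$-norm is a priori bounded, I conclude that for $n$ large enough $T^+(\vec\psi_n)>t_n$ and
\[
\sup_{t\in[0,t_n]} \|\vec\psi_n(t)-\vec u_{J,n}(t)\|_{H\times L^2} + \|\psi_n-u_{J,n}\|_{S_\ell([0,t_n])} \longrightarrow 0
\]
as $n\to\infty$ and then $J\to\infty$. Setting $\vec r_{J,n}(t) := \vec\psi_n(t) - \vec u_{J,n}(t)$ gives the claimed decomposition with the desired smallness. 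The main technical obstacle is the quantitative decoupling in $S_\ell$ of cross terms between different profiles, in particular when two profiles share the same scale but have diverging time shifts; this requires a change of variables reducing to the linear dispersive estimate for \eqref{LWl} and ultimately relies on the $L^\infty$-smallness statement (3) of Theorem~\ref{th:prof_decomp}, which is the principal novelty over the classical Bahouri--Gérard setting.
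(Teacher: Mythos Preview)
Your proposal is correct and spells out precisely the standard Kenig--Merle perturbative argument that underlies the result the paper cites: the paper's own proof is a single line, invoking \cite[Proposition~2.8]{DKM11} transported via the transformation $\q T$ (which conjugates \eqref{LWl} and the nonlinearity around $\ell$ to a radial semilinear wave equation in dimension $2+2g'(\ell)$), so you are effectively reproducing the content of that reference in the wave-map variables.

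Two small remarks. First, the equality $\vec u_{J,n}(0)=\vec\psi_n(0)$ is only exact when $t_{j,n}=0$; for profiles with $|t_{j,n}|/\lambda_{j,n}\to\infty$ one has merely $\|\vec u_{J,n}(0)-\vec\psi_n(0)\|_{H\times L^2}\to 0$, as you yourself note, and this $o_n(1)$ initial discrepancy is absorbed by the perturbation lemma. Second, your closing comment overstates the role of the $L^\infty$ smallness of $\gamma_{J,n}$: the decoupling of cross terms in $S_\ell$ between profiles with orthogonal parameters is already handled by the standard change-of-variables and dispersive estimates in the DKM framework, and does not require the $L^\infty$ control from Theorem~\ref{th:prof_decomp}~(3). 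That $L^\infty$ statement is a genuine novelty of the paper, but it is used elsewhere (notably in the construction of the critical element in Section~4), not in this proposition.
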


\begin{proof}
It is the translation of \cite[Proposition 2.8]{DKM11} via the transformation $\q T$. 
\end{proof}

Notice that $\vec \psi_n(0) - (\ell,0))_n$ is a bounded sequence in $H \times L^2$, in particular $\psi_n(0) = \psi_n(\infty) = \ell$. Let us emphasize that we can not evolve a decomposition where harmonic maps appear: this is a fundamental difference with the semi linear wave equation, where such stationary profile were allowed (they belong to $\dot H^1\times L^2$).

We will often use this result in the following particular case.

\begin{cor} \label{cor:psi_scat}
Let $\vec \psi_n$ be a sequence of wave maps such that $\vec \psi_n(\infty) = \vec \psi_n(0) =\ell$, and\begin{enumerate}
\item $\vec \psi_n(0) - (\ell,0)$ is a bounded sequence in  $H \times L^2$.
\item If $\vec \psi_{n,L} $ denotes the linear solution to \eqref{LWl} with initial data $\vec \psi_{n}(0) -(\ell,0)$, then $\| \psi_{n,L}  \|_{S(\m R)} \to 0$.
\end{enumerate}
Then for $n$ large enough $\vec \psi_n$ is defined globally on $\m R$ and
\[  \sup_{t \in \m R} \| \vec \psi_n(t) - (\ell,0) - \vec \psi_{n,L}(t) \|_{H \times L^2} \to 0 \quad \text{as} \quad n \to +\infty. \]
\end{cor}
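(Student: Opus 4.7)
The plan is to apply the linear profile decomposition of Theorem~\ref{th:prof_decomp} to the bounded sequence $\vec \psi_n(0) - (\ell,0)$, use hypothesis~(2) to force every profile to be trivial, and then conclude through the nonlinear evolution result of Proposition~\ref{prop:evol_decomp}.

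Concretely, extract profiles $(\vec V_{j,\mathrm L})_{j\ge 1}$, parameters $(t_{j,n},\lambda_{j,n})$, and a remainder $\vec \gamma_{J,n}$. By linearity, $\vec \psi_{n,L}$ equals the sum of the corresponding scaled and time-translated linear profiles plus $\vec \gamma_{J,n}$. The crucial input is the Pythagorean-type decoupling of the $S_\ell$ Strichartz norm, which via the bijection $\q T$ is just the usual pseudo-orthogonality for the $L^p_{t,x}$ Strichartz norm of solutions to the free wave equation in dimension $2+2g'(\ell)$ (a standard feature of the Bahouri--Gérard profile decomposition, cf.\ \cite{BG99}): with $p=2+3/g'(\ell)$,
\[
\|\psi_{n,L}\|_{S_\ell(\m R)}^{p} = \sum_{j=1}^{J} \|V_{j,\mathrm L}\|_{S_\ell(\m R)}^{p} + \|\gamma_{J,n}\|_{S_\ell(\m R)}^{p} + o_n(1).
\]
Since the left-hand side tends to $0$ by hypothesis, every summand does: each profile satisfies $\|V_{j,\mathrm L}\|_{S_\ell(\m R)}=0$, so $V_{j,\mathrm L}\equiv 0$.

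With all linear profiles zero, the associated nonlinear profiles in Proposition~\ref{prop:evol_decomp} are $U_j\equiv \ell$, which is global with vanishing $S_\ell$ norm. The hypotheses of Proposition~\ref{prop:evol_decomp} are therefore vacuously satisfied on any time interval. Applied to an arbitrary sequence $t_n\to +\infty$, it yields $T^+(\vec\psi_n)>t_n$ for $n$ large and
\[
\sup_{t\in[0,t_n]} \|\vec \psi_n(t)-(\ell,0)-\vec \gamma_{J,n}(t)\|_{H\times L^2} \le \|\vec r_{J,n}\|_{L^\infty_t(H\times L^2)},
\]
with $\limsup_n \|\vec r_{J,n}\|_{L^\infty_t(H\times L^2)} = o_J(1)$ as $J\to\infty$. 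Moreover, since all profiles are trivial, the Pythagorean expansion of $H_\ell\times L^2$ norms yields $\|\vec \gamma_{J,n}(0)-(\vec\psi_n(0)-(\ell,0))\|_{H_\ell\times L^2}=o_J(1)$ uniformly in $n$; as $\vec\gamma_{J,n}$ and $\vec\psi_{n,L}$ both solve \eqref{LWl}, conservation of the $H_\ell\times L^2$ norm gives $\sup_t\|\vec\gamma_{J,n}(t)-\vec\psi_{n,L}(t)\|_{H\times L^2}=o_J(1)$ uniformly in $n$. Sending $n\to\infty$ then $J\to\infty$ yields the desired bound on $[0,+\infty)$; the time reversal symmetry of \eqref{wm} handles $(-\infty,0]$ symmetrically.

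The main obstacle is minor and essentially cosmetic: one must quote the Strichartz pseudo-orthogonality underlying the profile decomposition (implicit in Theorem~\ref{th:prof_decomp}) to rule out profiles, and upgrade the local-in-time statement of Proposition~\ref{prop:evol_decomp} to a global-in-time one by letting the endpoint $t_n$ tend to $\pm\infty$ on both sides.
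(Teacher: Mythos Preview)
Your proof is correct and follows essentially the same route as the paper: show that hypothesis~(2) forces every profile in the decomposition of $\vec\psi_n(0)-(\ell,0)$ to vanish, then feed the trivial decomposition into Proposition~\ref{prop:evol_decomp}. The paper states the first step more tersely (simply asserting that condition~(2) means the decomposition is trivial) and closes with a short contradiction argument rather than your direct limit $t_n\to\pm\infty$, but the substance is identical. One simplification: once all $\vec V_{j,\mathrm L}=0$, the decomposition is an \emph{exact} equality $\vec\psi_n(0)-(\ell,0)=\vec\gamma_{J,n}(0)$, so $\vec\gamma_{J,n}\equiv\vec\psi_{n,L}$ for every $J$ and your $o_J(1)$ bookkeeping is unnecessary.
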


\begin{nb}
Notice we can combine Corollary \ref{cor:psi_dispersion} with this last result, as the hypothesis of the latter are the conclusions given by the former.
\end{nb}

\begin{proof}
The second condition means that any profile decomposition is trivial (i.e) does not contain any non trivial profile. Hence with the notation of the previous Proposition, $\vec \psi_n(0) -(\ell,0) = \vec \gamma_{J,n}(0) =:  \vec \psi_{n,L}(0)$ (does not depend on $J$) and 
\[ \vec \psi_n(t) + (\ell,0) = \vec \psi_{n,L}(t) + r_n(t). \]
To conclude, we argue by contradiction: 
\begin{enumerate}
\item if $\vec \psi_n$ blow-up in finite time, consider $t_n = T^+(\vec \psi_n)+1$ (and similarly for $T^-(\vec \psi_n)$). 
\item if the convergence does not hold, there exists $\eta>0$ and $t_n$ be such that $\| r_n(t_n) \|_{H \times L^2} \ge \eta$.
\end{enumerate}
Each hypothesis contradicts Proposition \ref{prop:evol_decomp}.
\end{proof}

\section{Bubble decomposition for a sequence of wave maps}

Our goal here is to study a sequence of wave maps with vanishing (space-time) kinetic energy. First we prove that at any scale, there is local (strong) limit which is a harmonic map (possibly constant).

\begin{prop}[Profile at any scale] \label{prop:1prof}
We assume (A1)-(A2).

Let $A>0$ and $\vec \psi_n$ be wave maps be defined on the time interval $[-A,A]$, such that
\begin{enumerate}
\item $\vec \psi_n$ have uniformly bounded energy $E(\vec \psi_n) \le E$.
\item $\psi_n(0)$ is bounded.
\item For some sequence $r_n \to +\infty$, 
\[ \| \partial_t \psi_n \|_{L^2((-A,A), L^2(r \le r_n))} \to 0 \quad \text{as } n \to +\infty. \]
\end{enumerate}
Then $\vec \psi_{\sigma(n)}$, up to a subsequence $\sigma(n)$, converges to some harmonic map $(Q,0)$ locally strongly in the following sense: for any $R >0$,
\begin{equation} \label{eq:conv_1scale} 
\sup_{t \in [-A,A]} \| \vec \psi_{\sigma(n)}(t)  - (Q,0) \|_{H \times L^2([1/R,R])} \to 0 \quad \text{as } n \to +\infty.
\end{equation}
\end{prop}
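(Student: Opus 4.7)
The plan is to extract a weak limit on compacta, use hypothesis (3) to identify it as a stationary profile $Q(r)$, pass to the limit in \eqref{wm} to show $Q$ is a harmonic map, and then upgrade to strong convergence on compact annuli uniformly in $t$.

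First, conditions (1), (2) and assumption (A1) together control $\psi_n$ in $L^\infty_{t,r}$ on any compact set $[-A,A] \times [1/R, R]$, while $(\partial_r \psi_n, \partial_t \psi_n)$ is bounded in $L^\infty_t L^2_r$ by the uniform energy bound. Hence $\psi_n$ is bounded in $H^1_{t,r}$ on each such compactum; by Rellich--Kondrachov and a diagonal extraction in $R$, we obtain a subsequence converging strongly in $L^p_{t,r}$ ($p < \infty$) and weakly in $H^1_{t,r}$ on compacta of $[-A,A] \times (0,\infty)$, to some limit $\psi_\infty$. Hypothesis (3) forces $\partial_t \psi_\infty = 0$, so $\psi_\infty(t,r) = Q(r)$. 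Testing \eqref{wm} against $\varphi \in \q C^\infty_c((-A,A) \times (0,\infty))$ and integrating by parts: the $\partial_{tt}\psi_n$ term vanishes in the limit thanks to (3), while the remaining terms pass to the limit using weak convergence of $\partial_r \psi_n$, strong $L^p$-convergence of $\psi_n$ and continuity of $f$; thus $Q$ solves the harmonic map ODE distributionally, and the ODE classification recalled in the introduction identifies $Q$ as a (possibly constant) harmonic map.

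For \eqref{eq:conv_1scale}, fix $R>0$. Setting $F_n(t) := \|\partial_t \psi_n(t)\|^2_{L^2([1/R,R])}$, hypothesis (3) yields $\int_{-A}^A F_n \to 0$, while the wave map equation together with the uniform $L^\infty$ bound on $\psi_n$ and the energy bound gives $|F_n'(t)| \le C$ uniformly in $n,t$ (after integration by parts against a cut-off); by a standard elementary argument, $\sup_t F_n \to 0$. For the $H$-norm component, the local energy $E_n(t):=E(\vec\psi_n(t);1/R,R)$ satisfies a flux identity of the form $E_n'(t) = \bigl[r\partial_r\psi_n \partial_t\psi_n\bigr]_{1/R}^{R}$, which one controls by averaging $R$ over a short interval so that the boundary traces become meaningful in $L^2_t$. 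Combined with the uniform smallness of $\partial_t\psi_n$ just established, this shows $E_n(t) \to E(Q;1/R,R)$ uniformly in $t$. Together with weak convergence and the strong $L^p$-convergence of $g(\psi_n) \to g(Q)$, we then conclude $\|\partial_r \psi_n(t)\|_{L^2(rdr,[1/R,R])} \to \|Q'\|_{L^2(rdr,[1/R,R])}$ uniformly in $t$, hence strong $H \times L^2$-convergence on $[1/R,R]$ uniformly in $t$, as claimed.

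The main obstacle is this last step: upgrading strong $L^p_{t,r}$-convergence to uniform-in-$t$ strong convergence in $H \times L^2$ on compact annuli, which requires careful handling of the boundary flux terms in the local energy identity and the averaging over $R$. The earlier compactness and passage-to-the-limit arguments are fairly standard, and (A3)/(A3') play no role here, consistent with the introduction's outline.
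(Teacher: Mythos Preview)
Your compactness and identification of the weak limit as a harmonic map are fine and match the paper. The trouble is in the upgrade to uniform-in-$t$ strong convergence, where both of your steps have genuine gaps.

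\textbf{Gap in the kinetic part.} The claim that $|F_n'(t)|\le C$ for $F_n(t)=\int|\partial_t\psi_n(t)|^2\chi\,r\,dr$ does not follow from the equation. Differentiating and using \eqref{wm} gives
\[
F_n'(t) = -\,\partial_t\!\int|\partial_r\psi_n(t)|^2\chi\,r\,dr \;-\;2\!\int\partial_t\psi_n\,\partial_r\psi_n\,\chi'\,r\,dr \;-\;2\!\int\partial_t\psi_n\,\frac{f(\psi_n)}{r}\,\chi\,dr,
\]
so only $(F_n+G_n)'$ is bounded, with $G_n(t)=\int|\partial_r\psi_n|^2\chi\,r\,dr$. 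From $\int F_n\to 0$ and $(F_n+G_n)$ equi-Lipschitz you cannot conclude $\sup_tF_n\to 0$: a spike of $F_n$ on a short time interval can be compensated by an opposite variation of $G_n$, and nothing in the energy bound forbids this.

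\textbf{Gap in the $H$ part.} The flux identity only tells you that $E_n(t)$ is asymptotically constant in $t$; it does not identify that constant as $E(Q;1/R,R)$. Weak lower semicontinuity gives $\liminf_n E_n(t)\ge E(Q;1/R,R)$, but the reverse inequality is exactly the strong convergence you are after, so the argument as written is circular.

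The paper closes both gaps differently. It first proves the \emph{space--time} strong convergence
\[
\iint |\partial_r(\psi_n-Q)|^2\,\varphi\,r\,dr\,dt\to 0
\]
by a direct integration-by-parts computation using the equations for both $\psi_n$ and $Q$ (this is where the nonlinear structure is actually used to kill the defect of compactness, not just a flux identity). This yields strong $H\times L^2$ convergence on a dense set of times. The passage to \emph{uniform} convergence in $t$ is then done not by an energy-flux argument but by invoking the uniform continuity of the nonlinear flow near $(Q,0)$ (Lemma~\ref{lem:Qflow}): partition $[-A,A]$ into small subintervals, pick in each a good time from the dense set, and propagate. Your proposal is missing both the space--time IBP identity and the flow-continuity lemma; without at least one of them, the upgrade step does not close.
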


\begin{nb}
Assume furthermore that for some sequences $t_n \subset [-A,A]$ and $r_n \to r_\infty \in (0,+\infty)$, 
\[ \psi_{\sigma(n)}(t_n,r_n)  \to l. \]
It follows from the convergence that $Q(r_\infty) = l$, hence from classification of harmonic maps, $Q$ is non constant if and only if $g(l) \ne 0$.
\end{nb}

\begin{nb}
A result of this type was already obtained in \cite{Str03}, but only for the first scale (and in the $L^2_{\mr{loc}}(H \times L^2([0,+\infty)_{\mr{loc}})$ topology), i.e. under the additional assumption that
\[  \sup_{t \in [-A,A]} E( \vec \psi_{n}; 0,1) \le \delta_0. \]
for some small $\delta_0 >0$.
\end{nb}

\begin{proof}
Up to rescaling, we can assume that $A=1$. 
As $E(\vec \psi_n)$ and $\psi_n(0)$ are bounded, $\vec \psi_n$ is bounded in $\q C([-1,1], (\dot H^1 \cap L^\infty) \times L^2)$. Therefore, up to a subsequence, there exists $\vec \psi_\infty$ such that
$\vec \psi_n \to \vec \psi_\infty$ a.e. $(-1,1) \times (0,+\infty)$, and $\psi_n \to \psi_\infty$ in $\q C_{\mr{loc}}([-1,1] \times [0,+\infty))$ and
\[ \vec \psi_n \stackrel{*}{\tendf} \vec \psi_\infty \quad L^\infty((-1,1),  \dot H^1 \times L^2) \text{ star-weakly}. \]
From Fatou Lemma, we deduce that
\[ \forall t \in [-1,1]-a.e.,  \quad \int \frac{g^2(\psi_\infty(t,r))}{r^2} rdr \le \liminf_n \int \frac{g^2(\psi_n(t,r))}{r^2} rdr. \]
It follows that $\vec \psi_\infty$ satisfies the wave map equation \eqref{wm} in a weak sense, and has finite energy $E(\vec \psi_\infty) \le \liminf_n E(\vec \psi_n)$. Now, as $\| \partial_t \psi_n \|_{L^2((-1,1), L^2(r \le r_n))} \to 0$, then for all $R \ge 0$, 
\[ \| \partial_t \psi_\infty \|_{L^2((-1,1), L^2(r \le R))} = 0, \]
and hence $\partial_t \psi_\infty =0$, that is $\vec \psi_\infty$  is a weak harmonic map: $r \partial_r (r \partial_r \psi) = f(\psi)$. It follows by elliptic regularity that $\vec \psi_\infty = (Q,0)$ a smooth harmonic map.

Let us prove as a first step that there holds strong local convergence in space-time $L^2_\loc((-1,1), H \times L^2)_\loc)$. Let $K$ be a compact of $(-1,1) \times (0, + \infty)$ (in space time). Then
\begin{equation}
\| \psi_n(t,r) - \psi_\infty(t,r) \|_{L^2(K,rdrdt)} + \| \partial_t \psi_n(t,r) \|_{L^2(K,rdrdt)} \to 0
\end{equation}
due the compact embedding $\dot H^1 \cap L^\infty (rdrdt) \to L^2(K, drdt/r)$ and to the vanishing of the kinetic energy. Fix now $\varphi \in \q D((-1,1) \times (0, + \infty))$.  We compute
\begin{align*}
\MoveEqLeft \int (\partial_r \psi_n - \partial_r \psi_\infty)^2(t,r) \varphi(t,r) rdrdt \\
& = - \int \Delta (\psi_n - \psi_\infty) (t,r) (\psi_n - \psi_\infty) (t,r)  \varphi(t,r) rdrdt \\
& \qquad  - \int \partial_r (\psi_n - \psi_\infty)(t,r)  (\psi_n - \psi_\infty)(t,r)   \partial_r \varphi(t,r)  rdrdt \\
& = - \int \partial_{tt} \psi_n(t,r) (\psi_n - \psi_\infty) (t,r)  \varphi(t,r) rdrdt \\
& \qquad - \int \frac{f(\psi_n(t,r)) - f(\psi_\infty(t,r))}{2r} (\psi_n - \psi_\infty) (t,r)  \varphi(t,r) drdt \\
& \qquad   - \int \partial_r (\psi_n - \psi_\infty)(t,r) (\psi_n - \psi_\infty) (t,r)  \partial_r \varphi(t,r) rdrdt \\
& = \int |\partial_t \psi_n(t,r)|^2 \varphi(t,r) rdrdt + \int \partial_t \psi_n (t,r) (\psi_n - \psi_\infty)(t,r) \partial_t \varphi(t,r) rdrdt  \\
& \qquad - \int \frac{f(\psi_n(t,r)) - f(\psi_\infty(t,r))}{2r} (\psi_n - \psi_\infty) (t,r)  \varphi(t,r) drdt \\
& \qquad - \int \partial_r (\psi_n - \psi_\infty)(t,r) (\psi_n - \psi_\infty)(t,r)   \partial_r \varphi(t,r) rdrdt
\end{align*}
Now, using again the compact embedding  $H_{\mr{loc}} (rdrdt) \to L^2_{\mr{loc}} (drdt/r)$, and the Cauchy-Schwarz inequality, we have
\begin{multline*}
\left| \int \partial_t \psi_n (\psi_n - \psi_\infty)(t,r) \partial_t \varphi (t,r) rdrdt \right| \\
\le \| \partial_t \psi_n \sqrt{|\partial_t \varphi|} \|_{L^2} \| (\psi_n - \psi_\infty) \sqrt{|\partial_t \varphi|} \|_{L^2} \to 0,
\end{multline*}
and similarly
\begin{align*}
\MoveEqLeft \left| \int \frac{f(\psi_n(t,r)) - f(\psi_\infty(t,r))}{2r} (\psi_n(t,r) - \psi_\infty(t,r))   \varphi(t,r) drdt \right| \\
& \le  \| (f(\psi_n) - f(\psi_\infty)) \varphi/r \|_{L^2} \| (\psi_n - \psi_\infty) \sqrt{\varphi/r} \|_{L^2} \\
& \le \omega_f(\sup_n \| \psi_n \|_{L^\infty})  \| (\psi_n - \psi_\infty) \sqrt{\varphi/r} \|_{L^2}^2 \to 0
\end{align*}
(We recall that $f$ is $\q C^1$, hence we can define $\omega_f(A) := \sup_{x,y \in [-A,A]} \frac{|f(x)-f(y)|}{|x-y|}$; we already noticed earlier that $\psi_n$ was a bounded sequence of $L^\infty_{t,x}$). Finally, as $\| \partial_r (\psi_n - \psi_\infty) \|_{L^2(rdrdt)}$ is bounded,
\begin{multline*}
\left| \int \partial_r (\psi_n - \psi_\infty)(t,r)  (\psi_n - \psi_\infty)(t,r)   \partial_r \varphi (t,r) rdrdt \right| \\
\le \| \partial_r (\psi_n - \psi_\infty)(t,r) \|_{L^2} \| (\psi_n - \psi_\infty)(t,r) |\partial_r \varphi(t,r)| \|_{L^2} \to 0.
\end{multline*}
This proves that
\[ \int \left( (\partial_r \psi_n - \partial_r \psi_\infty)^2(t,r) + \frac{(\psi_n -  \psi_\infty)^2(t,r)}{r^2} \right) \varphi(t,r) rdrdt \to 0. \]

Let us now prove \eqref{eq:conv_1scale}. Due to the previous convergence, the set
\[ \q P := \{ t \in [-1,1] \mid \forall R >0, \quad \| \vec \psi_n(t) - (Q,0) \|_{H \times L^2([1/(2R),R+1])} \to 0 \} \]
is dense in $[-1,1]$.

We now need the following version of uniform continuity of the flow around the harmonic map $(Q,0)$.

\begin{lem} \label{lem:Qflow}
Let $Q$ be a harmonic map, $T \ge 0$ and $\e >0$. There exist $\delta >0$ such that for all $0 \le r_1 \le 2r_1 < r_2 \le +\infty$ and wave map $\vec \psi$ such that
\[ \| \vec \psi(0) - (Q,0) \|_{H \times L^2([r_1,r_2])} \le \delta, \]
Then for all time $t \in (T^-(\vec \psi), T^+(\vec \psi))$ such that $|t| \le \min \{ T, (r_2-r_1)/2 \}$,
\[ \| \vec \psi(t) - (Q,0) \|_{H \times L^2([r_1+|t|,r_2-|t|])} \le \e. \]
\end{lem}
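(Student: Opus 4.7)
The plan is to combine finite speed of propagation with a stability statement for the wave map flow near the stationary solution $(Q, 0)$.

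First, I would modify $\vec\psi(0)$ outside $[r_1, r_2]$ to obtain new data $(\tilde\psi_0, \tilde\psi_1)$ that agrees with $\vec\psi(0)$ on $[r_1, r_2]$ and equals $(Q, 0)$ outside a slightly larger interval $[r_1 - \eta, r_2 + \eta]$, with a smooth interpolation on narrow transition strips of width $\eta$ (truncated at $0$ or $+\infty$ when appropriate). The radial Sobolev / Hardy embedding controls the boundary values $|\psi_0(r_i) - Q(r_i)|$ by $C\delta$; after exploiting the scale invariance of both \eqref{wm} and the $H \times L^2$ norm to keep the relevant constants uniform in the permitted range $r_2 \ge 2 r_1$, a direct estimate bounds the cost of the interpolation in $H$. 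Choosing first $\eta$ small, then $\delta$ small, makes $\|(\tilde\psi_0 - Q, \tilde\psi_1)\|_{H \times L^2}$ as small as desired.

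Next, I would invoke continuous dependence of the corotational wave map flow near $(Q, 0)$ in the energy norm: for any $\varepsilon' > 0$, there exists $\sigma_0 > 0$ such that any wave map $\vec{\tilde\psi}$ with $\|\vec{\tilde\psi}(0) - (Q, 0)\|_{H \times L^2} \le \sigma_0$ is defined on $[-T, T]$ and satisfies $\sup_{|t| \le T} \|\vec{\tilde\psi}(t) - (Q, 0)\|_{H \times L^2} \le \varepsilon'$. This is a standard consequence of the Shatah-Tahvildar-Zadeh local theory applied to the perturbation $v = \tilde\psi - Q$, which satisfies a semilinear wave equation with smooth bounded coefficients determined by the fixed background $Q$; a Gronwall-type argument propagates smallness over the prescribed time interval $[-T, T]$.

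Finally, finite speed of propagation for \eqref{wm} (propagation speed $1$) implies that $\tilde\psi$ and $\psi$ agree on the truncated cone $\{(t, r) : |t| \le (r_2 - r_1)/2, \ r_1 + |t| \le r \le r_2 - |t|\}$ within the common interval of existence, since their initial data agree on $[r_1, r_2]$. Restricting the global bound to this cone yields the lemma. The main obstacle is ensuring that the modification step produces a uniformly small globally perturbed data across all permitted $(r_1, r_2)$; this is addressed by the scaling reduction together with the fact that one only needs to interpolate across annuli of bounded aspect ratio near each endpoint, on which radial Sobolev embeddings are uniform.
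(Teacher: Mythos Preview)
Your approach is essentially the paper's: extend the data on $[r_1,r_2]$ to global data close to $(Q,0)$ in $H\times L^2$, invoke a stability lemma for the flow near $(Q,0)$ (the paper quotes \cite[Corollary~2.4]{C05} for exactly this), and then use finite speed of propagation to transfer the global bound back to the truncated cone.

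The one imprecision is the phrase ``choosing first $\eta$ small, then $\delta$ small''. If the transition strip near $r_1$ has additive width $\eta$, the $\dot H^1$ cost of interpolating a boundary mismatch of size $O(\delta)$ is of order $\delta^2\, r_1/\eta$, which blows up as $\eta\to 0$; so making $\eta$ small is counterproductive. The correct choice---which your final sentence about ``annuli of bounded aspect ratio'' already identifies, and which the paper implements explicitly via an affine extension on $[r_1/2,r_1]$ and $[r_2,2r_2]$ (Lemma~\ref{lem:extH})---is to fix the aspect ratio of the transition annuli; then the extension cost is $\le C\delta$ with a universal constant, and taking $\delta$ small alone suffices.
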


\begin{proof}
We postpone it to the Appendix.
\end{proof}

Fix $R > \sqrt 2$ (so that $2/R < R$) and let $\e>0$.

Define the integer $K$ to be the integer part of $2R+1$ so that $K \ge 2R$. Then for all $k \in \llbracket -K, K-1 \rrbracket$, there exist $t_k \in [k/K,(k+1)/K]$ such that $t_k \in \q P$. Let $\delta >0$ be provided by the previous Lemma \ref{lem:Qflow} with $T=1$ (and our previously fixed $\e>0$).

Define $N$ be such that 
\[ \forall k \in \llbracket -K, K-1 \rrbracket, \forall n \ge N, \quad  \| \vec \psi_n(t_k) - (Q,0) \|_{H \times L^2([1/(2R),R+1])} \le \delta. \]
Let $t \in [-1,1]$ and $n \ge N$. There exist $ k \in \llbracket -K, K-1 \rrbracket$ such that $|t-t_k| \le 1/K$. Hence by  \ref{lem:Qflow} (as $1/K \le 1$), 
\[ \| \vec \psi_n(t) - (Q,0) \|_{H \times L^2([1/(2R)+1/K,R+1-1/K])} \le \e. \]
Now $R+1-1/K \ge R$ and $1/(2R) + 1/K \le 1/R$; hence Lemma \ref{lem:Qflow} yields
\[ \forall t \in [-1,1], \quad  \| \vec \psi_n(t) - (Q,0) \|_{H \times L^2([1/R,R])} \le \e, \]
that is 
\[ \forall n \ge N, \quad  \sup_{t \in [-1,1]}  \| \vec \psi_n(t) - (Q,0) \|_{H \times L^2([1/R,R])} \le \e. \]
This is the desired convergence.
\end{proof}

We now prove a bubble decomposition result, which is the main result of this Section. It will be central in the soliton resolution in both the blow-up and global cases, and that we will also crucially use for the sharp scattering result (Theorem 
\ref{th:scat}).

\begin{thm}[Bubble decomposition] \label{th:profiles}
We assume (A1)-(A2).

Let $(\psi_n)_n \in \q C([-1,1], H \times L^2)$ be a sequence of wave maps. Assume that for some $R>0$,
\begin{enumerate}
\item The $\vec \psi_n$ have uniformly bounded energy $\sup_{n} E(\psi_n) = E < +\infty$.
\item For some $\ell \in \q V$, $\psi_n(0,R) \to \ell$ as $n \to +\infty$.
\item At $t=0$, the $\partial_t \psi_n$ have vanishing $L^2$-norm on $[0,R]$:
\[ \sup_{\lambda \le 1} \frac{1}{\lambda} \int_{-\lambda}^\lambda \int_0^{R} | \partial_t \psi_n(t,r)|^2 rdrdt \to 0 \quad \text{as} \quad n \to +\infty. \]
\item $\vec \psi_n(0)$ has vanishing energy on scale $1$: 
\[ \forall t \in [-1,1], \forall r >0,\quad  E(\vec \psi_n(t); r,R) \to 0 \quad \text{as} \quad n \to +\infty. \]
\end{enumerate}
Then there exists an integer $J \ge 0$, $J$ scales $(\lambda_{j,n})_n$ verifying
\[ 0 < \lambda_{J,n} \ll \cdots \ll \lambda _{2,n}  \ll \lambda_{1,n} \ll 1, \]
and $J$ harmonic maps $Q_j \in \q H$ such that, up to a subsequence $\vec \psi_{\sigma(n)}$,
\[ \vec \psi_{\sigma(n)}(t) - (\ell,0) = \sum_{j=1}^J \left(Q_{j} (\cdot/\lambda_{j,n}) - Q_j(\infty), 0 \right) + \vec b_n(t), \]
where  $\vec b_n \in \q C([-1,1], H\times L^2([0,R]))$ satisfies the following convergences. For all $A>0$,
\begin{enumerate}
\item (No energy at all scale) Let $\lambda_n$ be a sequence such that $0 \le \lambda_n \le R/A$. Then
\[ \sup_{t \in [-A \lambda_n , A \lambda_n]}  \| \vec b_n(t) \|_{H \times L^2( \lambda_{n}/A \le r \le A \lambda_{n})} \to 0. \]
\item (No energy up to the last scale) If $J \ge 1$, then 
\[ \sup_{t \in [-A \lambda_{J,n}, A \lambda_{J,n}]}  \| \vec b_n(t) \|_{H \times L^2(r \le A \lambda_{J,n})} \to 0 \quad \text{ as } n \to +\infty. \]
 If $J=0$, then $\sup_{t \in [-1/2, 1/2]}  \| \vec b_n(t) \|_{H \times L^2(r \le R)} \to 0$.
\item Let $\q C_A =\{ (t,r) \mid |t| \le \min(Ar, 1),\ r \le R \}$ be a truncated cone.  Then $\sup_{(t,r) \in  \q C_A} | b_n(t,r) | \to 0$. 
\end{enumerate}
Also
\begin{enumerate}
\item $\sum_{j=1}^J E(Q_{j},0) \le E$,
\item For all $1 \le j<J$, $Q_{j+1}(\infty) = Q_{j}(0)$, and $Q_1(\infty)=\ell$.
\end{enumerate}
\end{thm}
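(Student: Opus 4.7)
I would iteratively extract harmonic maps at successively smaller scales via repeated application of Proposition \ref{prop:1prof}. At each step I identify the largest residual scale at which the energy is concentrated, rescale, extract the non-constant harmonic map produced by Proposition \ref{prop:1prof}, and subtract it. By assumption (A2) together with the classification of harmonic maps recalled in the introduction, the energy of any non-constant harmonic map is bounded below by a positive constant $\delta_0$, so by the Pythagorean expansion of the energy across well-separated scales the procedure terminates after at most $J \le E/\delta_0$ steps, and what remains is $\vec b_n$.

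\textbf{Scale selection and first bubble.} Fix a large constant $A_0$, and define $\lambda_{1,n}$ as the largest $\lambda \in (0, R]$ for which
\[
\sup_{t \in [-\lambda, \lambda]} E(\vec \psi_n(t); \lambda/A_0, A_0 \lambda) \ge \delta_0/2,
\]
with $\lambda_{1,n} := 0$ if no such $\lambda$ exists. If $\lambda_{1,n}$ is eventually $0$ along a subsequence, then we stop with $J = 0$ and verify the conclusions directly on $\vec b_n = \vec\psi_n - (\ell,0)$. Otherwise, assumption (4) forces $\lambda_{1,n}/R \to 0$, since for any fixed positive $r_0$ the energy on $[r_0, R]$ vanishes. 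The rescaled sequence $\tilde{\vec \psi}_{1,n}(t, r) := (\psi_n(\lambda_{1,n} t, \lambda_{1,n} r), \lambda_{1,n} \partial_t \psi_n(\lambda_{1,n} t, \lambda_{1,n} r))$ satisfies the hypotheses of Proposition \ref{prop:1prof} on any interval $[-A, A]$: the energy is scaling-invariant, $\psi_n(0)$ is bounded in $L^\infty$ (since $\psi_n(0) \in \q H$), and under the change of variables hypothesis (3) becomes $\| \partial_t \tilde \psi_{1,n} \|_{L^2((-A,A), L^2(r \le R/\lambda_{1,n}))} \to 0$ with $R/\lambda_{1,n} \to +\infty$. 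Proposition \ref{prop:1prof} then gives, up to subsequence, a locally strong limit $(Q_1, 0)$; the choice of $\lambda_{1,n}$ forces $Q_1$ to be non-constant, and combining (4) and (2) gives $Q_1(\infty) = \ell$.

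\textbf{Iteration.} Set $\vec \psi_n^{(1)} := \vec \psi_n - (Q_1(\cdot/\lambda_{1,n}) - Q_1(\infty), 0)$. At scales $\ll \lambda_{1,n}$, $Q_1(r/\lambda_{1,n}) \to Q_1(0)$, so the residual behaves like a wave map with background $Q_1(0)$ in place of $\ell$. Repeat the scale selection and extraction for $\vec \psi_n^{(1)}$ centered around $Q_1(0)$, producing $\lambda_{2,n} \ll \lambda_{1,n}$ and a harmonic map $Q_2$ with $Q_2(\infty) = Q_1(0)$. Iterate. The Pythagorean expansion of the energy across the well-separated scales $\lambda_{j,n}$ keeps the bubbles decoupled so that their energies add up, bounding the number of iterations by $E/\delta_0$.

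\textbf{Verification and main obstacle.} Set $\vec b_n := \vec \psi_n^{(J)}$. For property (1), given a sequence $\lambda_n \le R/A$, the rescaling of $\vec b_n$ by $\lambda_n$ again satisfies the hypotheses of Proposition \ref{prop:1prof} (the residuals retain the required bounds by induction), and the maximality built into the scale selection forces the limit to be a constant harmonic map; the telescoping identity $\sum_{j < k}(Q_j(0) - Q_j(\infty)) = Q_k(\infty) - \ell$ then shows this constant is zero, yielding vanishing $H \times L^2$ norm on the annulus. Property (2) on the ball $[0, A \lambda_{J,n}]$ follows analogously from the termination criterion (no new bubble at any scale $\le \lambda_{J,n}$), combined with a dyadic covering of $[0, A\lambda_{J,n}]$ and the a priori total-energy bound to upgrade annular smallness to smallness on the ball. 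The hardest property is (3), pointwise $L^\infty$ vanishing on the truncated cone $\q C_A = \{|t| \le \min(Ar, 1),\ r \le R\}$: one combines the $H \times L^2$ smallness of $\vec b_n$ at $t = 0$ on annuli (which yields pointwise smallness via the weighted radial Sobolev embedding) with Lemma \ref{lem:Qflow}, applied around the appropriate background constant ($Q_j(0)$ or $\ell$ depending on the radius), to propagate this pointwise smallness in time. Within the cone $|t| \le A r$, one has just enough room at each radius $r$ to apply the lemma on an annulus of size comparable to $r$; maintaining the induction hypotheses of Proposition \ref{prop:1prof} through the iteration (uniform $L^\infty$ bounds on $\psi_n^{(k)}(0)$ and genuine separation of the scales) is a secondary technical point.
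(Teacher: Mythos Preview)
Your overall architecture---iteratively apply Proposition~\ref{prop:1prof} at successively smaller scales, use (A2) to bound the number of bubbles, define $\vec b_n$ as the residual---matches the paper's strategy. Two points deserve comment.

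\textbf{A minor sloppiness.} After writing $\vec\psi_n^{(1)} = \vec\psi_n - (Q_1(\cdot/\lambda_{1,n})-Q_1(\infty),0)$ you say ``repeat the scale selection and extraction for $\vec\psi_n^{(1)}$''. But $\vec\psi_n^{(1)}$ is not a wave map, so Proposition~\ref{prop:1prof} does not apply to it. The fix is what the paper does implicitly: always apply Proposition~\ref{prop:1prof} to the \emph{original} rescaled sequence $\psi_n(\mu_n t,\mu_n r)$ at each new scale $\mu_n$; since $Q_1(\cdot/\lambda_{1,n})$ is asymptotically constant at scales $\mu_n\ll\lambda_{1,n}$, the subtraction is cosmetic there. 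The paper's scale selection (via the level set $|g(\psi_n(0,r))|=\delta_0$ rather than an annular energy threshold) makes this cleaner, because it works directly on $\psi_n$ and automatically certifies that the local limit is non-constant.

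\textbf{A genuine gap: property (2).} Your proposed argument---``dyadic covering of $[0,A\lambda_{J,n}]$ and the a priori total-energy bound to upgrade annular smallness to smallness on the ball''---does not work. Knowing that each dyadic annulus below $\lambda_{J,n}$ carries energy $<\delta_0/2$ (or even $o(1)$ on any \emph{fixed} annulus after rescaling) does not sum to smallness on the ball: there are infinitely many annuli, and the total-energy bound only caps the sum by $E$, not by something small. The paper's Step~3 is the heart of the matter and uses genuine PDE structure. One argues by contradiction: if the energy on $[0,\alpha_n\lambda_{J,n}]$ does not vanish, pick $\mu_n\ll\lambda_{J,n}$ with $E(\vec\psi_{\sigma(n)}(0);0,2\mu_n)=\varepsilon$ for a small fixed $\varepsilon$, rescale to $u_n$, and exploit that $\|u_n-l\|_{L^\infty}$ is small (from the already-established $L^\infty$ smallness of $b_n(0)$) so that $f(u_n)\approx f'(l)(u_n-l)$ with $f'(l)=g'(l)^2>0$. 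A virial-type identity---multiply the equation by $(u_n-l)\varphi(t)\varphi(r)$ and integrate by parts in space and time, using the vanishing of the kinetic term---then yields
\[
\iint |\partial_r u_n|^2\,\varphi\varphi\,r\,dr\,dt \le -\tfrac{f'(l)}{2}\iint \frac{|u_n-l|^2}{r^2}\,\varphi\varphi\,r\,dr\,dt + o(1),
\]
forcing both sides to zero and contradicting the choice of $\mu_n$. This coercivity argument is not visible in your sketch and is essential; without it conclusion~(2) (and hence the $J=0$ case and the full decomposition) is unproved.
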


\begin{proof}
We first need to introduce some notation.

As $E(\psi_n)\le E$ and $\psi_n(0,R) \to \ell$, hence is bounded, there hold an $L^\infty$ bound on $\psi_n$: for some $K >0$,
\[ \forall n, \forall t, \forall r, \quad | \psi_n(t,r) | \le K. \]
By (A1), $\q V \cap [-K,K]$ is finite. For $\ell \in \q V$, denote $\ell^-, \ell^+ \in \q V$ the preceding and following points in $\q V$ (respectively), that is $\ell^- < \ell <\ell^+$ $(\ell^-, \ell^+) \cap \q V = \{ \ell \}$. Then define 
\[ \eta_\ell = \min\{ \sup \{ |g(x)| \mid x \in (\ell,\ell^+) \}, \sup \{ |g(x)| \mid x \in (\ell^-,\ell) \} \} >0, \]
and
\[ \delta_0 = \frac{1}{2}  \inf \{\eta_\ell \mid \ell \in V \cap [-K,K] \} > 0. \]
Finally, for $Q \in \q H$ a non constant harmonic map such that $Q(\infty) = \ell$ and normalized in the sense that
\[ Q(1) = \frac{1}{2} (Q(0) + Q(\infty), \]
let $\e_\ell>0$ be such that
\[ \forall r \le \sqrt{\e_\ell}, \quad |g(Q(r)| \le \delta_0/2, \quad \text{and} \quad |g(Q(1/r)| \le \delta_0/2, \]
and
\[ \e_0 = \inf \{ \e_\ell \mid \ell \in V \cap [-K,K] \} > 0. \]
Notice that by definition of $\e_0$, if $Q$ is a normalized harmonic map such that $Q(\infty) \in [-K,K]$ and $r_0$ is such that $|g(Q(r_0))| \ge \delta_0/2$, then 
\[ \text{if } r \le \e_0 r_0 \text{ or } r \ge r_0/\e_0, \quad |g(Q(r))| \le \delta_0/2. \]

Also notice that due to monotonicity of the energy along light cones, hypothesis \emph{(2)} and \emph{(4)} are in fact uniform in $t \in [-1,1]$. More precisely, there holds
\begin{itemize}
\item[\emph{(2')}] for all $r >0$,  $\sup_{t \in [-1,1]} \| \psi_n(t) - \ell \|_{L^\infty([r,R])} \to 0$ as $n \to +\infty$.
\item[\emph{(4')}] for all $r >0$,  $\sup_{t \in [-1,1]}  E(\vec \psi_n(t);r,R) \to 0 $ as $n \to +\infty$
\end{itemize}
Let us prove \emph{(4')} first. Let $r>0$ and $\delta>0$. For $r' \le r/2$ be such that $1/r' \in \m N$, apply \emph{(4)} with $t=kr'$, $k \in \llbracket -1/r'; 1/r' \rrbracket$. This gives $N$ such that for all $n \ge N$ and all $k \in \llbracket -1/r'; 1/r' \rrbracket$,
\[ E(\vec \psi_n(kr');r',R) \le \delta. \]
By monotonicity of the energy, with $\tau \in [-r',r']$ we get
\[ E(\vec \psi_n(kr'+\tau);r'+|\tau|,R) \le  E(\vec \psi_n(kr');r',R) \le \delta. \]
The $kr'+\tau$ cover all $[1,1]$, hence \emph{(4')}. 

We now turn to \emph{(2')}. Notice that for $\phi \in H$, 
\[ \| g(\phi) \|_{H}^2 \le (1+ \| g'(\phi) \|_{L^\infty}^2 ) E(\phi,0), \]
so that for all $t$  and $r>0$,
\[ \| g(\psi_n(t) ) \|_{H([r,R])}^2 \le (1+ \| g' \|_{L^\infty([-K,K])}^2) E(\vec \psi_n(t);r,R) \to 0, \]
where the convergence is uniform in $t$ due to \emph{(4')}. Due to Lemma \ref{lem:HLinfty}, we deduce that (for all $0 < r \le R/2$)
\[ \sup_{t \in [-1,1]} \| g(\psi_n(t)) \|_{L^\infty([r,R])} \to 0. \]
As $\q V \cap [-K,K]$ is finite and $\psi_n(0,R) \to \ell \in V$, it follows from a continuity argument that
\[ \forall r>0, \quad \sup_{t \in [-1,1]} \| \psi_n(t) -\ell \|_{L^\infty([r,R])} \to 0 \quad \text{as } n \to +\infty, \]
as desired.

\bigskip

\emph{Step 1:} Extraction of the profiles, and definition of $\lambda_{j,n}$ and $\vec b_n$.

We recall that an extraction is a function $\phi: \m N \to \m N$ which is (strictly) increasing.

We now define  the set of scales $S$ made of couples $\lambda := ((r_n)_n, \phi)$ where 
\[ (r_n)_n \in [0,R]^{\m N},  \quad \text{and } \phi: \m N \to \m N \text{ is an extraction}, \]
such that
\begin{equation} \label{ell_lambda}
\psi_{\phi(n)}(0,r_n) \text{  has a limit } l_\lambda \text{ such that } |g(l_\lambda)| = \delta_0.
\end{equation}
We denote by $S_0$ the subset of $S$ made of scales $((r_n)_n, \phi)$ such that for some non constant harmonic map $Q$, it satisfies furthermore
\begin{equation} \label{prof_lambda}
\forall A >0, \quad \| \vec \psi_{\phi (n)}(0) - (Q( r_{n}  \cdot),0) \|_{H \times L^2([r_{n}/A, A r_{n}])} \to 0.
\end{equation}

Notice that for all $r>0$, $\|\vec \psi_n(0) - (\ell,0) \|_{H \times L^2([r,R])} \to 0$ due to our hypothesis \emph{(4)}. Hence the pointwise bound gives
\[ \forall r >0, \quad  \| g(\psi_n(0)) \|_{L^\infty([r,R])} \to 0 . \]
It transpires that for any $\lambda = ((r_n)_n, \sigma) \in S$, then $r_n \to 0$. Also, in this case, if we define
\[ \phi_n(t,r) = \psi_{\sigma(n)}(r_{n} t, r_{n} r), \]
then $\phi_n$ is a wave map defined for $t \in [-1/r_n, 1/r_n]$ and for all $A >0$,
\[ \int_{-A}^A \int_0^{R/r_{n}} | \partial_t \phi_{n}(t,r)|^2 rdrdt = \frac{1}{r_{n}} \int_{-Ar_{n}}^{Ar_{n}} \int_0^{R} | \partial_t \phi_{\sigma(n)}(r_{n} t, r_{n} r)|^2 rdrdt \to 0, \]
and
\[ g(\phi_{n}(1)) = \lim_n g(\psi_{\sigma(n)}(r_{n})) = g(l_\lambda) \ne 0. \]
It follows from Proposition \ref{prop:1prof} (by rescaling again by a fixed factor $A$ and using a diagonal argument) that there exists an extraction $\pi$ and a non constant harmonic map $Q $, such that
\[ \forall A >0, \quad  \sup_{t \in [-A,A]}  \| \vec \phi_{\pi(n)}(t) - (Q, 0) \|_{H \times L^2([1/A, A])} \to 0. \]
In particular, due to continuity of the flow, we have that
\[ \forall t \in \m R, \quad \lim_{A \to +\infty} \lim_n E(\vec \phi_{\pi(n)}(t);1/A,A) = E(Q,0). \]
Unscaling, this can be rewritten as
\begin{equation} \label{eq:1prof}
\forall A >0, \quad  \sup_{t \in [-Ar_n,Ar_n]}  \| \vec \psi_{\phi \circ \pi(n)}(t) - (Q( r_{\pi(n)}  \cdot), 0) \|_{H \times L^2([r_{\pi(n)}/A, A r_{\pi(n)}])} \to 0.
\end{equation}
Notice that the scale $\tilde \lambda := ((r_{\pi(n)})_n, \phi \circ \pi) \in S_0$, we say it is adapted to $\lambda \in S$, and we call $Q$ the local limit at scale $\tilde \lambda$.

We now proceed with the extraction of the profiles. First assume that $S_0$ is empty. In this case, let us prove that for $n$ large enough, $\| g(\psi_n(0)) \|_\infty \le \delta_0$. Indeed, recall that $g(\psi_n(0,R)) \to 0$. If there exist an extraction $\phi$ such that for all $n$, $\| g(\psi_{\sigma(n)} (0)) \|_\infty > \delta_0$, then by continuity of $g(\psi_{\phi(n)})(0)$, for all $n$ there exist $r_n$ such that $|g(\psi_{\phi(n)})(0, r_n)| = \delta_0$. As $g^{-1}(\{ \pm \delta_0 \}) \cap [-K,K]$ is compact, up to extracting a subsequence, we can assume that $g(\psi_{\phi(n)})(0, r_n) \to \ell$ where $|g(\ell)| = \delta_0$. Hence $((r_n)_n, \sigma) \in S$ and we saw at the previous  paragraph how to construct an adapted scale to it: it follows that $S_0 \ne \varnothing$, a contradiction. 
Hence for $n \ge N$,
\[  \| g(\psi_n(0)) \|_\infty \le \delta_0. \]
Then we choose $J=0$ and $\vec b_n = \vec \psi_n$.

\medskip

If $S_0$ is not empty, we proceed by induction and construct a (finite) sequence of scales $\lambda_j = (r_{j,n})_n,\sigma_j) \in S_0$ for $j=1, \dots J$, such that
\begin{enumerate}
\item There exist an extraction $\pi_{j}$ such that $\sigma_{j+1} = \sigma_{j} \circ \pi_{j}$.
\item Orthogonality: $\ds \frac{r_{j+1,n}}{r_{j,\pi_{j+1}(n)}} \to 0$ as $n \to +\infty$.
\item For $j \in \llbracket 1, J-1 \rrbracket$ and all $n \in \m N$ and $r \in (r_{j+1,n}, \e_0 r_{j,\pi_j(n)} )$,  there holds $|g(\psi_{\sigma_{j+1}(n)}(0,r))| < \delta_0$.
\item For all $r \in [0,\e_0 r_{J,\pi_J(n)})$,  $|g(\psi_{\sigma_{J}(n)}(0,r))| < \delta_0$.
\end{enumerate}
The first two conditions give an order on $S_0$, the third one ensures that the scale $\lambda_{j}$ and $\lambda_{j+1}$ are ``consecutive'', and the fourth one is the stopping condition.

\medskip

Let $\phi_1$ be an extraction such that for all $n \in \m N$, $\| g(\psi_{\sigma_1(n)}(0)) \|_{L^\infty([0,R])} \ge \delta_0$. We can furthermore choose $\phi_1$ such that $\psi_{\sigma_1(n)}(r_{1,n})$ has a limit $l_1 \in g^{-1}(\{ \pm \delta_0 \})$, where $r_{1,n}$ is defined as follows: $r_{1,n}$ is such that $|g(\psi_n(r_{1,n})| = \delta_0$ and
\[  \forall r \in (r_{1,n},R], \quad |g(\psi_n(0,r))| < \delta_0. \]
$\phi_1$ and $r_{1,n}$ are well defined because $\psi_n$ is continuous, $g(\psi_n(R)) \to 0$ and $S_0$ is not empty.

Then $(r_{1,n},\sigma_1) \in S$, and let $\pi_1$ be adapted. This yields the first scale 
\[ \lambda_1:= ((r_{1,\pi_1(n)})_n,\sigma_1 \circ \pi_1) \in S_0, \]
and $Q_1$ is the local limit at scale $\lambda_1$.

Now assume that $(r_{j,n})_n,\sigma_{j})$ is constructed. For some non constant harmonic map $Q_j$ we have the convergence
\[ \forall A >0, \quad \| \vec \psi_{\sigma_j(n)}(0) - (Q_j (r_{j,n} \cdot),0)) \|_{H \times L^2(r_{j,n}/A, A r_{j,n})} \to 0 \quad \text{as } n \to +\infty. \]
For $A \ge 2$, the convergence also holds in $L^\infty(r_{j,n}/A, A r_{j,n})$, due to Lemma \ref{lem:HLinfty}. Now, as $|g(Q_j(\e_0))| \le \delta_0/2$, we see  (with $A = 1/\e_0$) that for large $n$, 
\[ |g(\psi_{\sigma_j(n)}(0, \e_0 r_{j,n}))| \le 2 \delta/3. \]

If for $n$ large enough, $\sup \{ |g(\psi_{\sigma_j(n)}(0,r)| \mid r \in [0,\e_0 r_{j,n}] \} \le \delta_0$, we stop here. Otherwise, we construct $(\tilde r_{j+1,n})_n,\sigma_{j+1})$ as follows. First choose an extraction $\rho_j$ such that denoting
\[ \phi_{j+1} = \sigma_j \circ \rho_j, \]
we have
\[ \forall n, \quad \sup_{ r \in [0,\e_0 r_{j,\rho_j(n)}]} |g(\psi_{\phi_{j+1}(n)}(0,r))| > \delta_0. \]
This allows to define $\tilde r_{j+1,n}$ such that $|g(\psi_{\phi_{j+1}(n)}(0, \tilde r_{j+1,n})| = \delta_0$ and 
\[ \forall r \in (\tilde r_{j+1,n},\e_0 r_{j,\rho_j(n)}), \quad |g(\psi_{\phi_{j+1}(n)}(r)| < \delta_0. \]
Extracting further, we can assume without loss of generality that $\psi_{\phi_{j+1}(n)}(0,\tilde r_{j+1,n})$ has a limit $l_{j+1}$, i.e. $(\tilde r_{j+1,n})_n, \phi_{j+1}) \in S$. Notice that due to the local convergence
\[ \vec \psi_{\sigma_j(\pi_j(n))}(0)) - (Q_j(r_{j,\rho_j(n)} \cdot),0) \]
on the scale $r_{j,n}$, it follows that
\[ \frac{\tilde r_{j+1,n}}{r_{j,\rho_j(n)}} \to 0 \quad \text{as } n \to +\infty. \] 

Let finally choose an extraction $\varpi_{j}$ adapted such that $(r_{j+1,n})_n, \sigma_{j+1}) \in S_0$, where $\sigma_{j+1} : = \phi_{j+1} \circ \varpi_{j}$ and $r_{j+1,n} := \tilde r_{j+1,\varpi_{j}(n)}$. Finally let $Q_{j+1}$ be the local limit at scale $(r_{j+1,n}, \sigma_{j+1})$.

Then $\pi_{j} := \rho_j \circ \varpi_{j}$ is an extraction such that
\[ \sigma_{j+1} = \sigma_j \circ \pi_j. \]
Also, we see that
\[ \frac{r_{j+1,n}}{r_{j,\pi_j(n)}} = \frac{\tilde r_{j+1,\varpi_j(n)}}{r_{j,\rho_j (\varpi_j(n))}} \to 0 \quad \text{as } n \to +\infty. \]
Finally, by the definition of $\tilde r_{j,+1,n}$, we have
\[ \forall r \in (r_{j+1,n},\e_0 r_{j,\pi_j(n)}), \quad |g(\psi_{\sigma_{j+1}(n)}(r)| < \delta_0. \]
Hence $(r_{j+1,n})_n, \sigma_{j+1})$ is as desired.

\bigskip

Notice that from our construction, we have $Q_{j+1}(\infty) = Q_j(0)$.

\bigskip

We know claim that this process has to stop after a finite number of steps. Indeed, fix $J \ge 1$ consider the sequence $(\vec \psi_{\sigma_J(n)})_n$. Then for $1 \le j \le J$, we have for all $A >0$,
\[ E(\vec \psi_{\sigma_j(n)}; 1/A r_{\pi_j \circ \cdots \circ \pi_j(n),j}, A r_{\pi_j \circ \cdots \circ \pi_J(n),j}) \to E(Q_j, 0; 1/A, A). \]
Now for $j < j'$, if we denote $\varpi = \pi_j \circ \cdots \circ \pi_{j'-1}$, and $\pi = \pi_{j} \circ \cdots \circ \pi_k$
\[ \frac{r_{\pi_{j'} \circ \cdots \circ \pi_k(n),j'}}{r_{\pi_j \circ \cdots \circ \pi_k(n),j}} = \frac{r_{\pi(n),j}}{r_{\varpi \circ \pi(n),j}} \to 0 \]
due to the fact that the scales $r_{n,j'}$ and $r_{\varpi(n),j}$ are orthogonal.

Summing this for $1 \le j \le J$, we thus get
\[ E \ge \liminf_n E(\vec \psi_{\sigma_k(n)}) \ge \sum_{j=1}^J E(Q_j, 0; 1/A, A).\]
Letting $A \to +\infty$, we get $E \ge  \sum_{j=1}^J E(Q_j, 0)$. As $Q_j(0), Q_j(\infty) \in [-K,K]$ for all $j$, we have that
\[ E(Q_j, 0) \ge E_K := \inf \{ G(k) - G(k') \mid k, k' \in V \cap [-K,K], k > k' \}. \]
$E_K >0$ due to assumption (A2). Hence $E \ge J E_K$ and this prove that the process has to stop after at most $E/E_K$ steps.

\bigskip

Thus we have constructed a sequence of $J$ scales $(r_{1,n}, \sigma_1), \dots, (r_{J,n},\sigma_J) \in S_0$ and of non constant harmonic maps $Q_j$ . We can now define for $j=1, \dots J$,
\[ \sigma = \sigma_J, \quad \lambda_{j,n} = r_{j, \sigma_{j+1} \circ \dots \circ \sigma_{J}(n)} \]
It will be convenient to write
\[ \lambda_{0,n} =R, \quad \lambda_{J+1,n} =0. \]
From our construction, we have that for all $j=1, \dots, J$, and $A >0$,
\[  \sup_{t \in [-A \lambda_{j,n}, A \lambda_{j,n}]} \| \vec \psi_{\sigma(n)}(t) - Q_j( \cdot / \lambda_{j,n}) \|_{H \times L^2(\lambda_{j,n}/A \le r \le A \lambda_{j,n})} \to 0 \quad \text{as } n \to +\infty, \]
and 
\[  \sup_{t \in [-1,1]}  \| \vec \psi_{\sigma(n)}(t) - \ell \|_{H \times L^2(\lambda_{0,n}/A \le r \le  \lambda_{0,n})} \to 0. \]
Also, for all $n \in \m N$ and $j=0, \dots, J$,
\begin{gather}
 \frac{\lambda_{j+1,n}}{\lambda_{j,n}} \to 0 \quad \text{as} \quad n \to +\infty \\
\| g(\psi_{\sigma(n)}(0)) \|_{L^\infty([\lambda_{j+1,n}/\e_0, \e_0 \lambda_{j,n}])} \le \delta_0, \label{eq:g(psi)}
\end{gather}

Define the error term: for $t \in [-1,1]$
\begin{equation} \label{def:bn}
\vec b_n (t)= \vec\psi_{\sigma(n)}(t) - (\ell,0) - \sum_{j=1}^J (Q_j( \cdot / \lambda_{j,n}) - Q_j(\infty)x, 0).
\end{equation}

\bigskip

\emph{Step 2 :} Convergence at all scales. Let $A >0$ and a sequence $\lambda_n \subset [0,R/A]$ be given. We now prove conclusion (1), that is
\begin{equation} \label{eq:bn0prof}
\sup_{t \in [-A\lambda_n, A\lambda_n]} \| \vec b_n(t) \|_{H \times L^2([\lambda_n/A,A\lambda_n])} \to 0 \quad \text{as } n \to +\infty.
\end{equation}
This means that $b_n$ has no profile in the cone $\q C_A$.

First let us prove
\begin{equation} \label{eq:bn(0)->0}
\| b_n(0) \|_{L^\infty([0,R])} \to 0 \quad \text{as } n \to +\infty
\end{equation}

From assumption  \emph{(4)}, for all $A >0$,
\[ \| \vec b_n(0) \|_{H \times L^2(R/A \le r \le R)} \to 0  \quad \text{as } n \to +\infty. \]
And it follows that from \emph{Step 1} and an easy induction that for all $j=1, \dots, J$, and $A >0$,
\[ \| \vec b_n(0) \|_{H \times L^2(\lambda_{j,n}/A \le r \le A \lambda_{j,n})} \to 0 \quad \text{as } n \to +\infty. \]
Hence due to Lemma \ref{lem:HLinfty},
\[  \| b_n(0) \|_{L^\infty(R/A \le r \le R)} +  \| b_n(0) \|_{L^\infty(r_{j,n}/A \le r \le A \lambda_{j,n})} \to 0 \quad \text{as } n \to +\infty. \]

We now argue by contradiction. Let $r_n \in [0,R]$ be such that $\limsup_n | b_n(0,r_n) | > 0$. 
From convergence on the scales $\lambda_{j,n}$ of $b_n(0)$, we have that
\[ \forall j=0, \dots J, \quad \frac{r_{\rho(n)}}{\lambda_{j,\rho(n)}} + \frac{\lambda_{j,\rho(n)}}{r_{\rho(n)}} \to +\infty. \]
We can assume that for some extraction $\rho$, and for some $j_0 \in \llbracket 0, J \rrbracket$,
\[ \lambda_{j_0+1,n} \ll  r_{\rho(n)} \ll \lambda_{j_0,n}. \]
For $j \le j_0$, $Q_{j}(r_{\rho(n)}/\lambda_{j,n}) \to Q_j(0) = Q_{j+1}(\infty)$ and for $j >j_0$, $Q_{j}(r_{\rho(n)}/\lambda_{j,n}) \to Q_j(\infty) = Q_{j-1}(0)$, hence
\[ \sum_{j=1}^J Q_j( r_{\rho(n)} / \lambda_{j,n}) - Q_j(\infty) \to \sum_{j=1}^{j_0} Q_{j+1}(\infty) - Q_j(\infty) = Q_{j_0}(0) - Q_1(\infty). \]
Now, as $Q_1(\infty) = \ell$, we deduce that
\[ b_n(0,r_{\rho(n)}) = \psi_{\sigma \circ \rho(n)}(r_n) - Q_{j_0}(0) + o_n(1). \]
 Up to extracting further we can assume, (recall $b_n(0)$ is continuous and $b_n(0,R) \to 0$), 
\[ b_{\rho(n)}(0,r_{\rho(n)}) = \e, \]
where $\e \ne 0$ is small so that we also have $g(\e+Q_{j_0}(0)) \ne 0$. It follows that
\[ \psi_{\sigma \circ \rho(n)}(0,r_n)) = b_{\rho(n)}(0,r_{\rho(n)})+ Q_{j_0}(0) + o_n(1) \to \e + Q_{j_0}(0). \]
Arguing as in \emph{Step 1}, (and relying on Proposition \ref{prop:1prof}), we deduce that there exist a harmonic map $Q$ such that $Q(1) = Q_{j_0}(0) + \e$ (in particular, $Q$ is not constant) and and extraction $\varpi$ such that
\[ \forall A >0, \quad \| \psi_{\sigma \circ \rho \circ \varpi (n)}(0) - Q \|_{H \times L^2([r_{\rho  \circ \varpi (n)}/A, A  r_{\rho \circ \varpi (n)}])} \to 0 \quad \text{as } n \to +\infty. \]
But then convergence also holds in $L^\infty$ and as $\| g(Q) \|_{L^\infty} \ge 2\delta_0$, and we deduce
\[ \liminf_{n \to +\infty} \| g(\psi_{\sigma(n)}(0)) \|_{L^\infty([\lambda_{j_0+1,n}, \lambda_{j_0,n}])} \ge 2 \delta_0, \] 
and we reached a contradiction with \eqref{eq:g(psi)}. This proves that for all sequences $(r_n)_n \subset [0,R]$, $\lim_n b_n(0,r_n) =0$, and hence
\[ \| b_n(0) \|_{L^\infty([0,R])} \to 0 \quad \text{as } n \to +\infty. \]
which is \eqref{eq:bn(0)->0}.

\bigskip

We now prove \eqref{eq:bn0prof} arguing by contradiction. Up to extracting a subsequence, we can assume without loss of generality the existence of $A>0$ and of a sequences $t_n$ such that $0 \le |t_n| \le A\lambda_n \le AR$, and for some $\rho \ge 0$ and  $\e > 0$ 
\[ \| b_n(t_n) \|_{H \times L^2( \lambda_n / A \le r \le A \lambda_n)} \ge \e, \quad \text{and} \quad \frac{t_n}{\lambda_n} \to \rho. \]
Also \emph{(2)} and \emph{(4)} show that 
\[ t_n, \lambda_n \to 0 \quad \text{as } n \to +\infty. \]
Similarly, due to \eqref{eq:1prof}, for all $j=0, \dots, J$,
\[ \frac{\lambda_n}{\lambda_{j,n}} + \frac{\lambda_{j,n}}{\lambda_n} \to \infty. \]
Up to extracting further we can assume that  there is $j_0 \in \llbracket 0, J \rrbracket$ such that for all $n$,
\[ \lambda_{j_0+1,n} \ll \lambda_n \ll \lambda_{j_0.n} ). \]
Now consider the wave map
\[ \phi_n(t,r) = \psi_{\sigma(n)}(\lambda_n t, \lambda_n r). \]
By inspection, $\phi_n$ is a finite energy wave map defined for times $t \in [-A,A]$,  and for $n$ large enough,
\begin{equation} \label{eq:Q_nonconstant}
\left\| \vec \phi_n \left( \frac{t_n}{\lambda_n} \right) - (Q_{j_0}(0),0) \right\|_{H \times L^2(1/A \le r \le A)} \ge \e/2,
\end{equation}
and
\[ \frac{1}{A} \int_{-A}^A \int_0^{R/\lambda_n} |\partial_t \phi_n (t,r)|^2 rdrdt = \frac{1}{A\lambda_n} \int_{-A \lambda_n}^{A \lambda_n} \int_0^{R} |\partial_t \psi_{\sigma(n)} (t,r)|^2 rdrdt \to 0. \]
Hence for some harmonic map $Q$, $\phi_n \to Q$ in the sense of Proposition \ref{prop:1prof}. $Q$ is not constant due to \eqref{eq:Q_nonconstant}. Unscaling, we have
\[ \forall R, \quad \sup_{t \in [-A\lambda_n,A\lambda_n]} \| \vec  b_n(t) - (Q(\cdot/\lambda_n),0) \|_{H \times L^2([\lambda_n/A,A \lambda_n])} \to 0 \]
But then, for any $t \in  [-A\lambda_n,A\lambda_n]$, $b_n(t,\lambda_n) \to Q(1) \ne 0$ as $n \to +\infty$: this contradicts \eqref{eq:bn(0)->0}. Hence \eqref{eq:bn0prof} is proved.

\medskip

Notice that it follows immediately from \eqref{eq:bn0prof} and Lemma \ref{lem:HLinfty} that
\begin{equation} \label{eq:step2}
\sup \{ |b_n(t,r)| \mid 0 \le r  \le R,  0 \le |t| \le \min\{ 1,A r \}  \} \to 0 \quad \text{as } n \to +\infty.
\end{equation}

\medskip

As $\| b_n(0) \|_{L^\infty([0,R])} \to 0$, and $\psi_{\sigma(n)} (0) \in \q V$, we see that for $n$ large enough,
\[ \psi_{\sigma(n)} (0) = Q_{J}(0)=:l. \]
Up to dropping the first terms of the sequence, we can assume that this holds for all $n$.

Then \eqref{def:bn} can be rewritten
\begin{equation} \label{def2:bn} 
\vec b_n(t) = \vec \psi_n(t) - (l,0) - \sum_{j=1}^J (Q_j(\cdot/\lambda_{j,n}) - Q_j(0), 0).
\end{equation}

\bigskip

\emph{Step 3:} (Convergence up to the last scale) We do the proof for $J \ge 1$, the proof in the case $J=0$ being completely similar. Fix $A >0$, we now prove that
\begin{equation} \label{eq:bn->0J}
\sup_{t \in [-A \lambda_{J,n}, A \lambda_{J,n}]} \| b_n(t) \|_{H \times L^2([0,A \lambda_{J,n}])} \to 0.
\end{equation}

From \emph{Step 2}, we know that there exists $\alpha_n \uparrow +\infty$ such that
\[ \sup_{t \in [-\alpha_n \lambda_{J,n}, \alpha_n \lambda_{J,n}]} \| b_n(t) \|_{H \times L^2([\lambda_{J,n}/\alpha_n,\alpha_n \lambda_{J,n}])} \to 0. \]
Let us first prove that
\begin{equation} \label{est:step3.1}
E(\vec \psi_{\sigma(n)}(0); 0 ; \alpha_n \lambda_{J,n}) \to 0.
\end{equation} 
We argue by contradiction. If the above convergence does not holds, there exists $\e>0$ and a subsequence that we still denote $\vec\psi_{\sigma(n)}$ such that for some $\mu_n \le \lambda_{J,n} / \alpha_n$, we have
\[ E(\vec \psi_{\sigma(n)}(0); 0 ; 2\mu_n) = \e >0. \]
By decreasing $\e>0$ if necessary, we can furthermore assume that 
\begin{equation} \label{def:e}
\forall x \in \m R, \quad |G(x) - G(l)| \le \e/2 \imp |f(x)- f'(l) (x-l)| \le \frac{f'(l)|x-l|}{2}.
\end{equation}
(we recall that $l := Q_J(0) \in \q V$, so that $f(l)=0$ and $f'(l) = g'(l)^2 >0$).
By monotonicity of the energy, we see that for $t \in [-\mu_n,\mu_n]$,
\[
E(\vec \psi_{\sigma(n)}(t); 0 ; \mu_n) \le \e \quad \text{and} \quad E(\vec \psi_{\sigma(n)}(t); 0 ; 3\mu_n) \ge \e.
\]
Let 
\[ \vec  u_n(t,r) = (\psi_{\sigma(n)}(\mu_n t, \mu_n r), \mu_n \partial_t \psi_{\sigma(n)}(\mu_n t, \mu_n r)). \]
Then $\vec u_n(t,r)$ is a wave map defined on the time interval $[-1,1]$, and for all $t \in [-1,1]$,
\begin{equation} \label{est:en_bn} 
E(\vec u_{n}(t); 0 ; 1) \le \e \quad \text{and} \quad E(\vec u_{n}(t); 0 ; 3) \ge \e.
\end{equation} 
The definition of $\e$ shows that for all $t \in [-1,1]$, $r \in [0,1]$
\begin{equation} \label{eq:f(un)}
|f(u_n(t,r))- f'(l) (u_n(t,r)-l)| \le \frac{f'(l)|u_n(t,r)-l|}{2}. 
\end{equation}
Also the condition \emph{(3)} yields
\begin{equation} \label{est:bnt}
\int_{-1}^{1} \int_0^{R/\mu_n} |\partial_t u_n(t,r)|^2 rdrdt \to 0 .
\end{equation}
This allows to apply Proposition \ref{prop:1prof}. As $\| u_n(0) - Q_J(0) \|_{L^\infty([0, \lambda_{J,n}/(\alpha_n \mu_n))} \to 0$, the local limit is constant, i.e. for all $B >0$,
\begin{equation} \label{est:bn1}
\sup_{t \in [-1,1]} \|  \vec u_n(t) - (l,0) \|_{H \times L^2([1/B,B])} \to 0.
\end{equation}
Let $\varphi : \m R \to \m R$ be an even cutoff function such that $\varphi(x) = 1$ for $|x| \le 1/2$ and $\varphi(x) =0$ for $|x| \ge 1$.
We can compute
\begin{align*}
\MoveEqLeft \iint |\partial_r u_n(t,r)|^2 \varphi(r) \varphi(t) rdrdt \\
& = - \iint (u_n(t,r)-l) \partial_{rr} u_n(t,r) \varphi(r) \varphi(t) rdrdt  \\
& \qquad - \iint (u_n(t,r)-l) \partial_{r} u_n(t,r) (r \varphi'(r) + \varphi(r)) \varphi(t) drdt \\
& = - \iint (u_n(t,r)-l) \partial_{tt} u_n(t,r) \varphi(r) \varphi(t) rdrdt \\
& \qquad - \iint \frac{(u_n(t,r)-l) f(u_n(t,r))}{r^2} \varphi(r) \varphi(t) rdrdt \\
& \qquad - \iint (u_n(t,r)-l) \partial_{r} u_n(t,r) \varphi'(r) \varphi(t) rdrdt \\
& = \int | \partial_t u_n(t,r)|^2 \varphi(r) \varphi(|t|) rdrdt - \iint \frac{(u_n(t,r)-l) f(u_n(t,r))}{r^2} \varphi(r) \varphi(t) rdrdt \\
& \qquad + \iint (u_n(t,r)-l) \partial_{t} u_n(t,r) \varphi(r) \varphi'(t) rdrdt \\
& \qquad - \iint (u_n(t,r)-l) \partial_{r} u_n(t,r) \varphi'(r) \varphi(t) rdrdt 
\end{align*}
Now from estimate \eqref{est:bnt},
\[ \int | \partial_t u_n(t,r)|^2 \varphi(r) \varphi(t) rdrdt = o(1). \]
Combining  \eqref{est:bnt} with the Cauchy-Schwarz inequality, we also have
\[ \iint (u_n(t,r)-l) \partial_{t} u_n(t,r) \varphi(r) \varphi'(t) rdrdt = o(1). \]
Also, as $\varphi'$ has support on $[1/2,1]$, estimate \eqref{est:bn1} and the Cauchy Schwarz inequality, we have
\[ \iint (u_n(t,r)-l) \partial_{r} u_n(t,r) \varphi'(r) \varphi(t) rdrdt  = o(1). \]
We now use that $f'(l) = g'(l)^2 > 0$. Then it follows from \eqref{eq:f(un)} that
\begin{multline*}
\iint \frac{(u_n(t,r)-l) f(u_n(t,r))}{r^2} \varphi(r) \varphi(t) rdrdt \\
\ge \frac{f'(l)}{2} \iint \frac{|u_n(t,r) -l|^2}{r^2} \varphi(r) \varphi(t) rdrdt.
\end{multline*}
Hence
\begin{align*}
 0 & \le \iint |\partial_r u_n(t,r)|^2 \varphi(r) \varphi(t) rdrdt  \\
 & \le - \frac{f'(l)}{2} \iint \frac{|u_n(t,r)-l |^2}{r^2}  \varphi(r) \varphi(t) rdrdt + o(1).
 \end{align*}
From this we deduce first that 
\[ \iint \frac{|u_n(t,r)-l |^2}{r^2}  \varphi(r) \varphi(t) rdrdt  \to 0, \]
then 
\[  \iint |\partial_r u_n(t,r)|^2 \varphi(r) \varphi(t) rdrdt \to 0. \]
Adding up the last 2 results along with \eqref{est:bnt}, we get
\[ \int_{-1/2}^{1/2} \| \vec u_n(t) - (l,0) \|_{H \times L^2(r \le 1/2)}^2 dt \to 0. \]
Now recalling \eqref{est:bn1}, we get
\[ \forall B >0, \quad \int_{-1/2}^{1/2} \| \vec u_n(t) - (l,0) \|_{H \times L^2(r \le B)}^2 dt \to 0. \]
This shows that
\[ \forall B>0, \quad \int_{-1/2}^{1/2} E(\vec u_n(t);0,B) dt \to 0. \]
However, this contradicts the second estimate in \eqref{est:en_bn}, and from there, estimate \eqref{est:step3.1} holds true.

\medskip

From \eqref{est:step3.1} it is now easy to prove \eqref{eq:bn->0J}. Let $A>0$. For $n$ large enough, $\alpha_n \ge 2A$. By finite speed of propagation, we deduce that
\[ \sup_{t \in [-A \lambda_{J,n}, A \lambda_{J,n}]} E(\vec \psi_{\sigma(n)}(t);0,A \lambda_{J,n}) \to 0. \]
By coercivity of the energy around $l = \psi_{\sigma(n)}(0)$, we deduce
\[ \sup_{t \in [-A \lambda_{J,n}, A \lambda_{J,n}]} \| \vec \psi_{\sigma(n)}(t) - (l,0) \|_{H \times L^2([0,A \lambda_{J,n}])} \to 0. \]
Notice that for all $1 \le j<J$, as $\lambda_{j,n} \ll \lambda_{j,n}$, we have
\[ \| (Q_j(\cdot/\lambda_{j,n}) - Q_j(0), 0) \|_{H \times L^2([0,A \lambda_{J,n}])} \to 0. \]
The last two statements and \eqref{def2:bn} yield \eqref{eq:bn->0J}.

In the case $J=0$, the same proof shows that
\[ \sup_{t \in [-1/2,1/2]} \| b_n(t) \|_{H \times L^2([0,1/2])} \to 0. \]
Now \emph{(4')} reads: for all $r \in (0,R)$,
\[ \sup_{t \in [-1,1]} \| b_n(t) \|_{H \times L^2([r,R])} \to 0. \]
We add up these last two statement to conclude the case $J=0$.
\end{proof}

\section{Scattering for wave maps below the \texorpdfstring{$L^\infty$}{Loo} threshold}

\begin{proof}[Proof of Theorem \ref{th:scat}]
Let  $\vec \psi$ be a finite energy wave map, with $\psi(\infty) =\ell$ and such that it satisfies \eqref{hyp:infty_bound}. Notice that $\psi(0) = \ell$ and there exists $c>0$ (depending only on $\delta_0$ and $\ell$) such that
\begin{equation} \label{gbound}
\forall t \in [0,T^+(\vec \psi)), \forall r \ge 0, \quad c |\psi(t,r) -\ell | \le  |g(\psi(t,r)| \le \frac{1}{c}  |\psi(t,r) -\ell |.
\end{equation}
From this point wise bound, we derive that 
\begin{equation} \label{Hbound}
\forall t \in [0,T^+(\vec \psi)), \quad \|  \vec \psi(t) - (\ell,0) \|_{H \times L^2} \le C E(\vec \psi).
\end{equation}
 
Also notice that $T^+(\vec \psi) = +\infty$. Assume indeed for the sake of contradiction that $T^+(\vec \psi) < \infty$. Due to \cite{Str03}, a bubble would form: hence for a sequence of time $t_n \uparrow T^+(\vec \psi)$, and of points $r_n$, $\psi(t_n,r_n) \to k$ where $k$ is such that $\ell, k$ are two consecutive elements of $V$. Thus $\liminf_n \| \psi(t_n) - \psi(\infty) \|_{L^\infty} \ge |k - \ell |  \ge d_\ell > c$, a contradiction.  

\medskip

We now do an induction on the energy in the spirit of the Kenig Merle concentration compactness argument  \cite{KM06,KM08}. Define $E_c$ to be the supremum of all $E \ge 0$ such that all wave maps $\vec \psi$ of energy $E(\vec \psi) \le E$, which satisfies $\psi(\infty)=\ell$ and \eqref{hyp:infty_bound}, are global and scatters.

Then \cite[Theorem 2]{CKM08} shows that $E_c >0$ (recall $|g'(\ell)| \in \{ 1, 2 \}$). We now argue by contradiction and assume that $E_c$ is finite. 

\bigskip

\emph{Step 1.} We first construct a critical element, that is a wave map $\vec V$ defined on $[0,+\infty)$, that satisfies \eqref{hyp:infty_bound}, but $\| V - \ell \|_{S_\ell([0,+\infty))} = +\infty$.

Let $\vec \psi_n$ be a minimizing sequence of wave maps, i.e. $\vec \psi_n$ satisfies \eqref{hyp:infty_bound} (hence $T^+(\vec\psi_n) = +\infty)$), and
\[ E(\vec \psi_n) \le E_c + \frac{1}{n}, \quad \text{and} \quad \| \psi_n - \ell \|_{S_\ell([0,+\infty))} = +\infty. \]
Up to rescaling, we can also assume that for all $n$
\[ E(\vec \psi_n(0);1,+\infty) = E_c/100. \]
Choose a sequence of time $t_n$ with vanishing $L^2$ norm of $\partial_t \psi_n(t_n)$. More precisely, we claim that for all $n$, there exist a sequence of times $t_{n,m}$ such that
\begin{enumerate}
\item $\ds \sup_{0 <  s \le t_{n,m}/2} \frac{1}{s} \int_{t_{n,m}-s}^{t_{n,m}+s} \int_0^{t_{n,m}/4} |\partial_t \psi_n(t,r) |^2 rdrdt \le 1/m$.
\item $\ds E(\vec \psi_n(t_{n,m}); t_{n,m}/m; t_{n,m}(1-1/m)) \le \frac{1}{m}$,
\item $\ds \| \psi_n(t_{n,m}) - \ell \|_{L^\infty(r \ge t_{n,m}/m)} \le \frac{1}{m}$,
\end{enumerate}
Indeed Corollary \ref{cor:psi_t->0} provides us with a sequence satisfying the first condition, and then, up to extracting, Proposition \ref{prop:sse} and Corollary \ref{cor:lc_infty} allows to satisfy the second and third condition (we emphasize that these last two results hold for any sequence).

Now choose $t_n =t_{n,n}$. Then Theorem \ref{th:profiles} applies to the sequence of wave maps $\vec\varphi_n(t,r) = \vec \psi_n(t_n+t_n t,t_n r)$, with $R=1/2$: indeed, we have by scaling
\[ \sup_{0 < \lambda \le 1} \frac{1}{\lambda} \int_{-\lambda}^\lambda \int_0^{1/2} |\partial_t \varphi_n(t,r)|^2 rdrdt \to 0, \]
and for $t \in [-1,1]$, and $r >0$, we have due to our second condition
\[ E(\vec \varphi_n(t);r,1/2) = E(\vec \psi_n(t_n); rt_n,t_n/2) \to 0. \]
Also notice that due to finite speed of propagation, we have
\begin{equation} \label{eq:psi_n_fsp}
\forall c>1, \quad \| \vec \psi_n(t_n) - (\ell,0) \|_{H \times L^2(r \ge ct_n)} \to 0.
\end{equation}
We can therefore apply Theorem \ref{th:profiles}. It yields a bubble decomposition which is trivial: it can not contain any harmonic map profile $Q_j$ due to \eqref{hyp:infty_bound}, and we see that for all $A>0$ and sequence $0 < \mu_n \ll t_n$,
\begin{equation} \label{eq:psi_n_no_scale}
\| \vec \psi_n(t_n) - (\ell,0) \|_{H \times L^2([\mu_n/A,A \mu_n])} \to 0, \quad \text{and} \quad
\| \psi_n(t_n) -\ell \|_{L^\infty(0,t_n/2)} \to 0.
\end{equation}
Condition (2) then translate into absence of profile on scale 1:
\begin{equation} \label{eq:psi_n_no_scale1}
\| \vec \psi_n(t_n) - (\ell,0) \|_{H \times L^2([t_n/n,(1-1/n)t_n])} \to 0. 
\end{equation}
Using the third condition, we also get
\begin{equation} \label{psi_n(t_n)->0}
\| \psi_n(t_n) - \ell \|_{L^\infty} \to 0.
\end{equation}
In view of \eqref{Hbound} $\vec \psi_n(t_n) -(\ell,0)$ is bounded in $H \times L^2$, hence admits (up to a subsequence) a profile decomposition in the sense of Theorem \ref{th:prof_decomp}. Denote $V_{j,L}$ the linear profiles, and $(t_{j,n}, \lambda_{j,n})$ the parameters.

We claim that there are no nontrivial profiles $V_j$ such that $t_{j,n}=0$. Consider indeed such a profile $V_j$, for the sake of contradiction. Up to extracting, and changing scale by a fixed factor, it suffices to rule out three cases:
\[ 1)\ \lambda_{j,n} \ll t_n, \quad 2)\ \lambda_{j,n} = t_n, \quad 3)\  \lambda_{j,n} \gg t_n. \]
In case $1)$, from \eqref{eq:psi_n_no_scale},  for any $A >1$, we have
\[ \| \vec \psi_n(t_n) - (\ell,0) \|_{H \times L^2(t\lambda_{j,n}/A \le r \le A \lambda_{j,n})} \to 0. \]
Recall the Pythagorean expansion with cut-off Proposition \ref{prop:pyth_cutoff}, it implies in particular that
\[  \left\| \left( V_{j,L} \left( 0, \frac{r}{\lambda_{j,n}} \right), \frac{1}{\lambda_{j,n} }\partial_t V_{j,L} \left( 0, \frac{r}{\lambda_{j,n}} \right) \right) \right\|_{H \times L^2 (\lambda_{j,n}/A \le r \le A \lambda_{j,n})} \to 0. \]
Unscaling, we get
\[ \| \vec V_{j,L}(0) \|_{H \times L^2([1/A,A])} \to 0. \]
As this is true for all $A >1$, we get $\vec V_{j,L}(0)=0$.

In case $3)$, from \eqref{eq:psi_n_fsp} and Proposition \ref{prop:pyth_cutoff}, we similarly get with $c=2$
\[ \left\| \left( V_{j,L} \left( 0, \frac{r}{\lambda_{j,n}} \right), \frac{1}{\lambda_{j,n} }\partial_t V_{j,L} \left( 0, \frac{r}{\lambda_{j,n}} \right) \right) \right\|_{H \times L^2 ( r \ge 2 t_n)} \to 0. \]
Unscaling, we deduce
\[ \| \vec V_{j,L}(0) \|_{H \times L^2(r \ge 2 t_n/\lambda_{j,n})} \to 0. \]
As $t_n/\lambda_{j,n} \to 0$, we get $\vec V_{j,L}(0)=0$.

In case $2)$, from \eqref{eq:psi_n_no_scale1} and Proposition \ref{prop:pyth_cutoff}, we get
\[ \left\| \left( V_{j,L} \left( 0, \frac{r}{t_n} \right), \frac{1}{\lambda_{j,n} }\partial_t V_{j,L} \left( 0, \frac{r}{t_n} \right) \right) \right\|_{H \times L^2 (t_n/n \le r \le t_n (1-1/n))} \to 0, \]
which after unsealing yields
\[ \| \vec V_{j,L}(0) \|_{H \times L^2([1/n,1-1/n])} \to 0.\]
Similarly, \eqref{eq:psi_n_fsp} and  Proposition \ref{prop:pyth_cutoff} give after unscaling
\[ \| \vec V_{j,L}(0) \|_{H \times L^2([c,+\infty))} \to 0. \]
Hence $\vec V_{j,L}(0) = 0$.

We ruled out all three cases, and this establish our claim that there is no profile with $t_{j,n}=0$.

Consider now the set $\q J$ of indices $j$ such that $\ds \frac{t_{j,n}}{\lambda_{j,n}} \to + \infty$ and the nonlinear profile $V_j$ does not scatter at $+\infty$ (or blows up in finite time). Any of these profile have energy greater or equal to $E_c >0$, hence (by Pythagorean expansion of the energy) there only is a finite number of them.

If  there is no such profiles then for all $j$, $\| V_{j} \|_{S(t \ge -t_{j,n}/\lambda_{j,n})}$ is bounded (it tends to 0 if $t_{j,n}/\lambda_{j,n} \to -\infty$). Then Proposition \ref{prop:evol_decomp} shows that there is a uniform bound $M$ such that, for all sequence $\tau_n \ge t_n$, 
\[ \| \psi_n - \ell \|_{S_\ell([t_n,\tau_n])} \le M. \]
This in turn implies that $\vec \psi_n$ scatters at $+\infty$: it is a contradiction. Hence $\q J \ne \varnothing$.

We can assume without loss of generality that $\q J$ is indexed by $1, \dots, J_0$.

Among such $j \le J_0$, choose $\lambda_{j,n}$ slowest, then among such $j$, we consider $j_0$ such that $t_{j_0,n}$ is lowest, i.e
\[ \forall j \le J_0, \quad \frac{\lambda_{j,n}}{\lambda_{j_0,n}} \to +\infty \quad \text{or} \quad \left( \lambda_{j,n}= \lambda_{j_0,n} \text{ and } \frac{t_{j_0,n} - t_{j,n}}{\lambda_{j_0,n}} \to -\infty \right). \]
$\vec V := \vec V_{j_0}$ will be our critical element. First let us show that $\vec V$ is global and satisfies \eqref{hyp:infty_bound}. Let $t \in [0, T^+(\vec V))$ and $r \ge 0$. Define $\tau_n = t_{j_0,n} + \lambda_{j_0,n} t$. 
Then
\[  \| V_{j_0,L} \|_{S_\ell([-\frac{t_{j,n}}{\lambda_{j,n}}, \frac{\tau_n - t_{j,n}}{\lambda_{j,n}}])} \to \| V_{j_0,L} \|_{S_\ell ((-\infty, t])} < +\infty. \]
By construction, for $j \in \q J \setminus \{ j_0 \} $, 
\[ \frac{\tau_n -t_{j,n}}{\lambda_{j,n}} \to - \infty \quad \text{hence} \quad \| V_{j,L} \|_{S_\ell([-\frac{t_{j,n}}{\lambda_{j,n}}, \frac{\tau_n - t_{j,n}}{\lambda_{j,n}}]} \to 0. \]
For $j \notin \q J$, notice that $\tau_n \ge 0$ for $n$ large enough ($t_n/\lambda_{j,n} \to +\infty$) and 
\begin{itemize}
\item if $\ds \frac{t_{j,n}}{\lambda_{j,n}} \to +\infty$, $\| V_{j,L} \|_{S_\ell(\m R)} < +\infty$, and 
\item if $\ds \frac{t_{j,n}}{\lambda_{j,n}} \to -\infty$, $\| V_{j,L} \|_{S_\ell([-\frac{t_{j,n}}{\lambda_{j,n}}, +\infty))} \to 0$.
\end{itemize}
Hence Proposition \ref{prop:evol_decomp} applies, we can evolve the profile decomposition up to $\tau_n$:
\begin{align*}
 \psi_n(\tau_n,\lambda_{j_0,n} r) -\ell & = V \left( t, r \right) - \ell + \sum_{j \ne j_0, j \le J} \left( V_{j} \left( \frac{\tau_n - t_{j,n}}{\lambda_{j,n}}, \frac{\lambda_{j_0,n}}{\lambda_{j,n}} r \right) - \ell \right) \\
 & \qquad + \gamma_{J,n}(\tau_n, \lambda_{j_0,n} r) + r_{J,n}(\tau_n, \lambda_{j_0,n} r).
 \end{align*}
By inspection (arguing as in the proof of equation \eqref{sup_psi_n}), we deduce that 
\[ |V(t,r)-\ell | \le c. \]
It follows that $\vec V_j$  satisfies \eqref{hyp:infty_bound}; as noticed above, we then have $T^+(\vec V) = +\infty$. Also, due to the Pythagorean expansion of the energy, we see that
\[ E(\vec V) \le E_c. \]
As we chose it so that it does not scatter at $+\infty$, we must have $E(\vec V) = E_c$ by definition of the critical energy: therefore $\vec V$ is a critical element.

\bigskip

\emph{Step 2.} We reach a contradiction.

For this, we can repeat the argument of \emph{Step 1} on $\vec V$:  there exist a sequence of times $t_n \uparrow +\infty$ such that
\begin{enumerate}
\item $\ds \sup_{0 <  s \le t_{n}/2} \frac{1}{s} \int_{t_{n}-s}^{t_{n}+s} \int_0^{t_{n}/4} |\partial_t V(t,r) |^2 rdrdt \to 0$ as $n \to +\infty$,
\item $\| V(t_{n}) -\ell \|_{L^\infty} \to 0$,
\item $\vec V(t_n) - (\ell,0)$ admits a profile decomposition, with profiles $\vec U_{j,L}$.
\end{enumerate}
Also, as the bubble decomposition yield no bubble, convergence up to the last scale shows that
\begin{equation} \label{eq:V_no_en_inside}
\| \vec V(t_n) - (\ell,0) \|_{H \times L^2 ([0,t_n/2])} \to 0.
\end{equation}
Arguing as in \emph{Step 1}, we see that one of the nonlinear profiles $\vec U_{j_0}$ is critical, in particular $E(\vec U_{j_0}) = E_c = E(\vec V)$. By Pythagorean expansion of the energy, it follows that there are no other nontrivial linear profiles in the profile decomposition of $\vec V(t_n)$, and that the dispersion term tends to 0 in $H\times L^2$. In short, there holds
\[ \vec V(t_n,r ) - (\ell,0) = \left( U_{j_0,L} \left( - \frac{t_{j_0,n}}{\lambda_{j_0,n}}, \frac{r}{\lambda_{j_0,n}} \right), \frac{1}{\lambda_{j_0,n}} \partial_t U_{j_0,L} \left( - \frac{t_{j_0,n}}{\lambda_{j_0,n}}, \frac{r}{\lambda_{j_0,n}} \right) \right) + o_n(1), \]
where the $o_n(1)$ is in $H \times L^2$.

\smallskip 

Assume $t_{j_0,n} =0$. Observe that $\vec V$ has energy on the light cone: from monotonicity of the energy along light cones, Proposition \ref{prop:sse} and \eqref{eq:V_no_en_inside}, we have
\[ \limsup_{n \to +\infty} \| \vec V(t_n) - (\ell,0) \|_{H \times L^2(|r-t_n| \ge A)} \to 0 \quad \text{as} \quad A \to +\infty. \]
In particular,
\[ \| \vec V(t_n) - (\ell,0) \|_{H \times L^2(|r-t_n| \ge t_n/2)} \to 0. \]
Now, as $\vec U_{j_0,L}(0) \ne 0$, let $\rho>0$ such that $\| \vec U_{j_0,L}(0) \|_{H \times L^2([\rho,2\rho])} =: \alpha_0 >0$. Then 
\[ \left\| \left( U_{j_0,L} \left(0, \frac{r}{\lambda_{j_0,n}} \right), \frac{1}{\lambda_{j_0,n}} \partial_t U_{j_0,L} \left( 0, \frac{r}{\lambda_{j_0,n}} \right) \right) \right\|_{H \times L^2([\rho \lambda_{j_0,n}, 2 \rho \lambda_{j_0,n}])} = \alpha_0 >0. \]
Comparing with $\vec V(t_n)$ we must have for $n$ large enough
\[ 2\rho \lambda_{j_0,n} \ge t_n/2 \quad \text{and} \quad \rho \lambda_{j_0,n} \le 3t_n/2. \]
Up to extracting, we can furthermore assume that $\lambda_{j_0,n}/t_n \to \lambda \in (0,+\infty)$.
But then, unscaling the concentration on $\vec V(t_n)$ we have
\[ \limsup_{n \to +\infty} \| \vec U_{j_0,L}(0) \|_{H \times L^2(|r-t_n|/\lambda_{j_0,n} \ge A/\lambda_{j_0,n})} \to 0 \quad \text{as} \quad A \to +\infty. \]
But this implies $\vec U_{j_0,L}(0) =0$, a contradiction.

\smallskip 

Assume now that $\ds \frac{t_{j,n}}{\lambda_{j,n}} \to +\infty$. Then $\| U_{j_0,L} \|_{S_\ell((-\infty;-t_{j,n}/\lambda_{j,n}])} \to 0$ so the same holds for the non linear profile: $\| U_{j_0} \|_{S_\ell((-\infty;-t_{j,n}/\lambda_{j,n}])} \to 0$. Then applying Proposition \ref{prop:evol_decomp} backward in time up to time $t =0 \le t_n$, we get
\[ \| V - \ell \|_{S_\ell(0,t_n)} \le  \| U_{j_0} - \ell \|_{S_\ell((-\infty;-t_{j,n}/\lambda_{j,n}])} + o(1) \to 0. \]
Hence by monotone convergence, we deduce $ \| U \|_{S_\ell([0,+\infty))} = 0$, and $V =0$, a contradiction.

\smallskip 

Assume finally that
$\ds \frac{t_{j,n}}{\lambda_{j,n}} \to -\infty$. Then $\| U_{j_0,L} \|_{S_\ell([-t_{j,n}/\lambda_{j,n},+\infty))} \to 0$ so the same holds for the non linear profile: $\| U_{j_0} - \ell \|_{S_\ell([-t_{j,n}/\lambda_{j,n},+\infty))} \to 0$. Then we can use Proposition \ref{prop:evol_decomp} to get that for any sequence $\tau_n \ge t_n$, 
\[ \| V  - \ell \|_{S_\ell([t_n,\tau_n))} \to 0. \]
This implies $\| V - \ell \|_{S_\ell([t_n,+\infty))} \to 0$: in particular $V$ scatters at $+\infty$, a contradiction.

\smallskip 

We reached a contradiction in all cases, hence $E_c = +\infty$.
\end{proof}

\section{Outside the light cone}

\begin{prop}[Scattering state] \label{prop:scat_state} 
We assume (A1)-(A2)-(A3').

Let $\vec \psi$ be a finite energy wave map such that $T^+(\vec \psi) =+\infty$. Denote $\ell = \psi(\infty)$.
There exist a map $\vec \phi_L$ solution to linear problem \eqref{LWl} and an increasing non-negative continuous function $\alpha(t)$ such that  $\alpha(t) = o(t)$ and
\[ \| \vec \psi(t) - (\ell,0) - \vec \phi_L(t)  \|_{H \times L^2(r \ge \alpha(t))} \to 0 \quad \text{as} \quad t \to +\infty. \]
\end{prop}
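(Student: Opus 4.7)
The strategy is to combine the pointwise decay of $\psi-\ell$ outside the self-similar region (Corollary \ref{cor:lc_infty}) with the sharp scattering result Theorem \ref{th:scat}, applied to a family of truncated wave maps indexed by later and later times $T$, and then to verify that the resulting linear scattering profiles form a Cauchy family in $H\times L^2$.

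\textbf{Step 1: Choice of $\alpha$.} By Corollary \ref{cor:lc_infty}, $\|\psi(t)-\ell\|_{L^\infty(r\ge\lambda t)}\to0$ for every $\lambda>0$. A standard diagonal extraction then produces a continuous non-decreasing function $\alpha:[T_0,+\infty)\to(0,+\infty)$ with $\alpha(t)/t\to0$, $\alpha(t)\to+\infty$, and a function $c(t)\to0$ such that
\[ \|\psi(t)-\ell\|_{L^\infty(r\ge\alpha(t))}\le c(t)<d_\ell/3\quad\text{for }t\ge T_0. \]

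\textbf{Step 2: Truncated wave maps and scattering.} For each $T\ge T_0$, let $\chi_T\in C^\infty$ be a cutoff with $\chi_T\equiv0$ on $[0,\alpha(T)]$, $\chi_T\equiv1$ on $[2\alpha(T),+\infty)$, and $|\chi_T'|\lesssim1/\alpha(T)$. Define the truncated data at time $T$,
\[ \vec W_T:=(\ell,0)+\chi_T\bigl(\vec\psi(T)-(\ell,0)\bigr), \]
so that $\|W_T^{(0)}-\ell\|_{L^\infty}\le c(T)$ and $\vec W_T$ coincides with $\vec\psi(T)$ on $\{r\ge2\alpha(T)\}$. Let $\vec w_T$ be the wave map evolution of $\vec W_T$ starting at time $T$. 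I would then invoke Theorem \ref{th:scat}: provided the a priori bound $\|w_T(t)-\ell\|_{L^\infty}<d_\ell$ persists on the whole forward existence interval, $\vec w_T$ is global for $t\ge T$ and scatters to a solution $\vec\phi_L^{(T)}$ of \eqref{LWl}, i.e.
\[ \|\vec w_T(t)-(\ell,0)-\vec\phi_L^{(T)}(t)\|_{H\times L^2}\xrightarrow[t\to+\infty]{}0. \]
Finite speed of propagation gives $\vec w_T(t,r)=\vec\psi(t,r)$ on $\{r\ge2\alpha(T)+(t-T)\}$.

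\textbf{Step 3: Cauchy property and conclusion.} For $T_1<T_2$ sufficiently large, both $\vec w_{T_1}$ and $\vec w_{T_2}$ agree with $\vec\psi$ on $\{r\ge2\alpha(T_2)+(t-T_2)\}$ for $t\ge T_2$, so $\vec w_{T_1}-\vec w_{T_2}$ vanishes there. Passing to the limit $t\to+\infty$ and using Proposition \ref{prop:light_cone_en_conc} (which says that the $H\times L^2$ norm of a solution of \eqref{LWl} concentrates on the light cone), I would show that $\|\vec\phi_L^{(T_1)}-\vec\phi_L^{(T_2)}\|_{H\times L^2}$ is dominated by the $H\times L^2$ norm of $\vec W_{T_1}-\vec W_{T_2}$ on the annular region $\{\alpha(T_1)\le r\le2\alpha(T_2)\}$, which is $o_{T_1\to+\infty}(1)$ since the cutoff error is controlled by $c(T_1)$ and Step 1. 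Hence $(\vec\phi_L^{(T)})_{T}$ is Cauchy in $H\times L^2$; call the limit $\vec\phi_L$. Given $\epsilon>0$, choose $T$ large, and then $t$ large so that the scattering error for $\vec w_T$ is $<\epsilon$ and $\|\vec\phi_L^{(T)}-\vec\phi_L\|_{H\times L^2}<\epsilon$; then on $r\ge2\alpha(T)+(t-T)$ we have $\vec\psi=\vec w_T$ and the desired estimate follows. Enlarging $\alpha$ mildly so that $\alpha(t)\ge2\alpha(T(t))+(t-T(t))$ for a suitable choice $T(t)\to+\infty$ with $T(t)=o(t)$ completes the proof.

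\textbf{Main obstacle.} The most delicate point is the a priori $L^\infty$ bound required in Step 2 to apply Theorem \ref{th:scat}: only smallness at the initial time $t=T$ is readily available, while the theorem demands $\|w_T(t)-\ell\|_{L^\infty}<d_\ell$ throughout the forward maximal interval. I would address this by a bootstrap argument: let $T^*$ be the first time at which $\|w_T(t)-\ell\|_{L^\infty}=d_\ell/2$; on $[T,T^*)$ one can already apply the compactness/critical-element machinery underlying the proof of Theorem \ref{th:scat} to obtain, at $t=T^*$, a sequence along which Theorem \ref{th:profiles} forces a non-constant harmonic map bubble to appear — which would contradict the $L^\infty$ bound $d_\ell/2<d_\ell$, since any such bubble satisfies $\|Q-Q(\infty)\|_{L^\infty}\ge d_\ell$. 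Taking $T$ large (hence $c(T)$ small) then shows $T^*=+\infty$ and the hypothesis of Theorem \ref{th:scat} holds.
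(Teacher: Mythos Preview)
Your overall strategy---truncate $\vec\psi$ at a late time, show each truncation scatters via Theorem \ref{th:scat}, then pass to a limit of the linear scattering data---is the same as the paper's. The difference, and the genuine gap, is in how you establish the forward-in-time $L^\infty$ bound needed to invoke Theorem \ref{th:scat}.

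Your bootstrap in the ``Main obstacle'' paragraph does not work as written. At the putative first time $T^*$ where $\|w_T(T^*)-\ell\|_{L^\infty}=d_\ell/2$, you propose to apply Theorem \ref{th:profiles} (or the critical-element machinery from the proof of Theorem \ref{th:scat}) to extract a harmonic-map bubble. But Theorem \ref{th:profiles} requires a \emph{sequence} of wave maps with vanishing averaged kinetic energy (hypothesis (3) there), which comes from Corollaries \ref{cor:psi_dot1}--\ref{cor:psi_t->0} and is only available along a sequence of times tending to $+\infty$. A single finite time $T^*$ gives you nothing of the sort; there is no mechanism to force a bubble at $T^*$. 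Likewise, the compactness argument in the proof of Theorem \ref{th:scat} presupposes the $L^\infty$ bound on the whole forward interval---it is not a tool for proving that bound.

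The paper closes this gap by a direct energy-accounting argument, with no bootstrap. Define $\q E(A)=\lim_{t\to\infty}E(\vec\psi(t);t-A,\infty)$ and $\q E=\lim_{A\to\infty}\q E(A)$. Proposition \ref{prop:sse} then gives $E(\vec\psi(t);t/2,\infty)\to\q E$, so the truncated map $\vec\phi_n$ (built from $\vec\psi(t_n)|_{r\ge t_n/2}$) has total energy $E(\vec\phi_n)\to\q E$. For $t\ge t_n$, finite speed of propagation gives $\vec\phi_n(t,r)=\vec\psi(t,r)$ on $r\ge t-t_n/2$, hence $E(\vec\phi_n(t);t-A_1,\infty)=E(\vec\psi(t);t-A_1,\infty)\to\q E$. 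By conservation of energy, $E(\vec\phi_n(t);0,t-A_1)\le 2\varepsilon$ for $n$ large, which (since $\phi_n(\cdot,0)=\ell$) yields $\|\phi_n(t)-\ell\|_{L^\infty(r\le t-A_1)}\le C\varepsilon$ via Lemma \ref{lem:EH}. On $r\ge t-A_1$ one simply quotes Corollary \ref{cor:lc_infty}. This gives the uniform $L^\infty$ smallness on $[t_n,\infty)$ directly. Your argument is missing precisely this energy balance; once you insert it, the rest of your outline (with some care in Step 3, where the paper uses a profile decomposition and monotonicity of the $H_\ell\times L^2$ norm on exterior cones rather than a direct Cauchy estimate) goes through.
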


\begin{proof}
First we recall Proposition \ref{prop:sse}, hence it suffices to construct $\vec \phi_L$ such that for all $A \ge 0$,
\begin{equation} \label{eq:scat_state1}
\| \vec \psi(t)  - (\ell,0) - \vec \phi_L(t) \|_{H \times L^2(r \ge t-A)} \to 0 \quad \text{as} \quad t \to +\infty.
\end{equation}

The proof follows the scheme of \cite[Proposition 2.8]{CKLS13b}, except now it is more involved to obtain scattering for the approximations of $\psi$ around the light cone. We crucially rely on our new scattering result Theorem \ref{th:scat}.

Let $t_n \uparrow +\infty$ and define the sequence of wave maps $\vec \phi_n$ with data at time $t_n$ as a suitable extension in $\q H \times L^2$ of $\vec\psi(t_n)|_{r \ge t_n/2}$, as in Lemma \ref{lem:extH}. Specifically let $\vec \phi_n(t_n) = (\phi_{n,0}, \phi_{n,1})$ where
\begin{align*}
\phi_{n,0}(r) & = \begin{cases} \ds 
2 \frac{\psi(t_n,t_n/2) - \ell}{t_n} r + \ell & \text{if } r \le t_n /2, \\
\psi(t_n,r) & \text{if } r \ge t_n/2,
\end{cases} \\
\phi_{n,1}(r) & = \partial_{t} \psi(t_n,r).
\end{align*}
(Recall that $\ell = \psi(\infty)$). 
Then $\vec \phi_n (0) = \vec \phi_n(\infty) = \ell$ and from Lemma \ref{lem:extH},
\begin{equation} \label{phi_n_tn/2}
\| \vec \phi_n(t_n) -(\ell,0) \|_{H\times L^2(r \le t_n/2)} \to 0.
\end{equation}
By construction $\vec \psi$ and $\vec \phi_n$ coincide at time $t_n$ on $[t_n/2,+\infty)$, hence by finite speed of propagation, as long as they are defined at time $t$,
\begin{equation} \label{phi_n=psi}
\forall r \ge t_n/2+|t-t_n|, \quad  \vec \phi_n(t,r) = \vec \psi(t,r).
\end{equation}

\bigskip

\emph{Step 1:} Let us show that for $n$ large enough,
\begin{enumerate}
\item $\vec \phi_n$ is defined on $[A_0,+\infty)$ for some $A_0$ not depending on $n$.
\item $\vec \phi_n$ scatters at $+\infty$.
\end{enumerate}

Choose $\lambda=1/2$ in Proposition \ref{prop:sse}, and apply it to $\vec \psi$:
\[ \limsup_{t \to +\infty} E(\vec \psi(t); t/2,t-A)\to 0 \quad \text{as } A \to +\infty. \]
We then deduce that
\[ \limsup_n E(\vec \phi_n(t_n); 0,t_n -A) \to 0 \quad \text{as } A \to +\infty. \]
Let $E_\ell >0$ be the minimal energy of a non constant harmonic map $Q$ with $Q(\infty) = \ell$ (or if $\ell_\pm \in \q V \setminus \{\ell \}$ are the closest elements to $\ell$, with $\ell_- < \ell < \ell_+$, $E_\ell = 2 \min \{ G(\ell_+ ) - G(\ell), G(\ell) - G(\ell_-) \}$.
Choose $A_0$ large enough, so that for $n$ large enough, 
\[ E(\vec \psi(t_n); 0,t_n-A_0)  \le E_\ell/2, \]
By finite speed of propagation, we deduce that for all $\tau$ such that $\vec \phi_n(t+\tau)$ is defined
\[ E(\vec \phi_n(t_n+\tau); 0, t_n-A_0 - |\tau|) \le E(\vec \psi_n(t_n); 0,t_n-A_0) \le E_\ell/2. \]
We recall the blowup criterion derived in \cite{Str03}: blow up concentrates in the light cone an energy at least $E_\ell$. Hence for $|\tau| \le t_n - A_0$, this blow up criterion shows that $\vec \psi_n(t_n + \tau)$ is well defined: it then suffices to choose $\tau = A_0 - t_n$, and $T^-(\vec \phi_n) < A_0$. Of course we can drop the first terms, so that is holds for all $n$.

We now turn to scattering at $+\infty$. For this, we will prove that as $n \to +\infty$,
\begin{equation} \label{phi_n_infty}
\sup_{t \in [t_n, T^+(\phi_n))}  \| \phi_n(t) - \ell \|_{L^\infty} \to 0.
\end{equation}

By monotonicity of the energy outside cones, we see that for all $A \in \m R$, the limit
$\lim_{t \to +\infty} E(\vec \psi(t),t-A, +\infty)$ exists, let us denote it $\q E(A)$. As the energy density is non-negative, $\q E(A)$ is an increasing function of $A$, and is also bounded by $E(\vec \psi)$. Denote $\q E = \lim_{A \to +\infty}\q E(A) \le E(\vec \psi)$.
Let us show that
\begin{equation} \label{en_infty}
E(\vec \psi(t); t/2,+\infty) \to \q E.
\end{equation}
Indeed, let $\e >0$, and choose $A$ large so that 
\[ \q E -\e  \le \q E(A) \le \q E \quad \text{and} \quad \limsup_{t \to +\infty} E(\vec \psi(t); t/2, t-A) \le \e \]. 
There exist $T$ large such that
\[ \forall t \ge T, \quad \q E(A) \le E(\vec \psi(t); t-A,+\infty) \le \q E(A) +\e, \text{and} \quad E(\vec \psi(t); t/2,t-A) \le 2\e . \]
Then for all $t \ge T$, we have
\[ \q E - \e \le \q E(A) \le E(\vec \psi(t);t-A,+\infty) \le \q E(A) + \e \le \q E+\e. \]
Hence for $t \ge \max \{ T, 2A \}$,
\[ \q E - \e \le E(\vec \psi(t);t-A,+\infty) \le E(\vec \psi(t);t/2,+\infty) \le E(\vec \psi(t);t/2,t-A)  + \q E + \e \le \q E + 3 \e. \]
\eqref{en_infty} follows.

Now, from \eqref{en_infty} and \ref{prop:sse}, we have
\begin{equation} \label{psi_E}
\limsup_{t \to +\infty} | E(\vec \psi(t);t-A,+\infty) - \q E | \to 0 \quad \text{as } A \to +\infty.
\end{equation}
From \eqref{en_infty} and \eqref{phi_n_tn/2}, we deduce that
\begin{equation} \label{en_phi_n}
E(\vec \phi_n(t_n)) \to \q E \quad \text{as } n \to +\infty.
\end{equation}

We now prove \eqref{phi_n_infty}. Let $\e>0$.
From \eqref{psi_E}, there exist $A_1$ and $T_1$ such that
\[ \forall t \ge T_1, \quad | E(\vec \psi(t);t-A_1,+\infty) - \q E | \le \e. \]
We also use Corollary \ref{cor:lc_infty}. Let $T_2$ such that
\[\forall t \ge T_2, \quad \| \psi(t) - \ell \|_{L^\infty(r \ge t/2)} \le \e. \]
Define now $N$ such that $t_{N} \ge \max \{ T_1, 2A_1, T_2 \}$ and for $n \ge N$,
\[ |E(\vec \phi_n(t_n)) - \q E| \le \e. \]
Then for $n \ge N$, we have $t_n \ge t_{N}$ so that if $t \ge t_n$, then $t-A_1 \ge t-t_n/2 \ge t_n/2 + |t-t_n|$. By \eqref{phi_n=psi}, it transpires
\[ \forall t \in [t_n, T^+(\phi_n)), \quad | E(\vec \phi_n(t);t-A_1,+\infty) - \q E | \le \e. \]
By conservation of the energy,
\[  \forall t \in [t_n, T^+(\phi_n)), \quad E(\vec \phi_n(t);0,t-A_1) \le 2\e. \]
Due to the point wise bound, we get as $\phi_n(0) = \ell$ 
\[  \forall t \in [t_n, T^+(\phi_n)), \quad \| \phi_n(t) - \ell \|_{L^\infty(r \le t-A_1)} \le C \e. \]
Also if $t \ge t_n$, then $t \ge T_2$ and again by \eqref{phi_n=psi} and 
\[ \forall r \ge t-A_1, \quad | \phi_n(t,r) - \ell | = | \psi(t,r) - \ell | \le \e. \]
This proves that for $n \ge N$,
\[ \forall t \in [t_n, T^+(\vec \phi_n)), \quad \| \phi_n(t) -\ell \|_{L^\infty} \le 2\e, \]
which is exactly \eqref{phi_n_infty}.

Then Theorem \ref{th:scat} applies for all $n \ge N$, and shows that $T^+(\vec \phi_n) =+\infty$ and $\vec \phi_n$ scatters at $+\infty$. Also notice that \eqref{phi_n_infty} and \eqref{en_phi_n} show that for some $C>0$ and for $n$ large enough,
\begin{equation} \label{eq:bound_phi_n_H}
\sup_{t \ge t_n} \| \vec \phi_n(t) - (\ell,0)  \|_{H \times L^2} \le C\q E.
\end{equation}

\emph{Step 2:} Construction of $\vec \phi_L$ and end of proof.

Let $\vec \phi_{n,L}$ be the linear solution of \eqref{LWl} which is the scattering state of $\vec \phi_n$, that is, for all $n$,
\begin{equation} \label{eq:phin_scat}
\| \vec \phi_n(t) - (\ell,0) - \vec \phi_{n,L}(t) \|_{H \times L^2} \to 0 \quad \text{as } t \to +\infty.
\end{equation}
Recall that the flow of  \eqref{LWl} preserves the $H_\ell \times L^2$ norm. Along with the bound on $\| \vec \phi_n(t) \|_{H \times L^2}$, this shows that
\[ \| \vec \phi_{n,L}(0) \|_{H \times L^2} \le C \q E. \]
Up to extracting, we can assume that $\vec \phi_{n,L}(0)$ has a weak limit $\vec \phi_L(0)$ in $H \times L^2$. We define $\vec \phi_L(t)$ as the linear solution of \eqref{LWl} with initial data  $\vec \phi_L(0)$ at time 0.
 
Let $\tau_n$ be such that 
\[ \| \vec \phi_n(\tau_n) - (\ell,0) - \vec \phi_{n,L}(\tau_n) \|_{H \times L^2} \le \frac{1}{n}. \]
Up to extracting further, we can assume that the sequence $\vec \phi_{n}(\tau_n) - (\ell,0) -  \vec \phi_{L}(\tau_n)$, which is bounded in $H \times L^2$, admits a profile decomposition in the sense of Theorem \ref{th:prof_decomp}:
\begin{multline*} 
\vec \phi_{n}(\tau_n,r)  - (\ell,0) =  \vec \phi_L(\tau_n,r)  \\
 +  \sum_{j=2}^J 
 \left( \frac{1}{\lambda_{j,n}^{d-1}} V_{j,L} \left( - \frac{t_{j,n}}{\lambda_{j,n}}, \frac{r}{\lambda_{j,n}} \right), \frac{1}{\lambda_{j,n}^d} \partial_t V_{j,L} \left( - \frac{t_{j,n}}{\lambda_{j,n}}, \frac{r}{\lambda_{j,n}} \right) \right) + \gamma_{J,n}(0,r)
\end{multline*}
Notice that this appears as a profile decomposition for the sequence $(\vec \phi_{n,L}(\tau_n,r))$ with first profile $\vec \phi_{L}$ and parameter $t_{1,n} = \tau_n$, $\lambda_{1,n} =1$. Indeed, the  profile decomposition is constructed of via taking weak limits, that is:
\[ \vec V_{j,L}(0) \text{ is the weak limit of } \Lambda[\lambda_{j,n}] S_\ell(t_{j,n}) (\vec \phi_{n}(\tau_n) -(\ell,0)), \]
where $S(t)$ is the linear flow of \eqref{LWl} and $\Lambda$ is the scaling operator
\[ \Lambda[\mu] (\varphi_0, \varphi_1)(t,r) : = \left( \varphi_0 \left( \frac{t}{\mu}, \frac{r}{\mu} \right), \frac{1}{\mu} \varphi_1 \left( \frac{t}{\mu}, \frac{r}{\mu} \right) \right). \]
Also $\vec \phi_L(0)$ is the weak limit of  $S( -\tau_n) (\Lambda[1] \vec \phi_{n}(\tau_n) - (\ell,0)) = S( -\tau_n) (\vec \phi_{n}(\tau_n) - (\ell,0))$. Indeed endow $H \times L^2$ with the natural scalar product derived from the $H_\ell \times L^2$ norm:
\[ \left\langle \vec \rho, \vec \sigma \right\rangle = \int \left( \rho_1 \sigma_1 + \partial_r \rho_{0} \partial_r \sigma_0 +  g'(\ell)^2 \frac{\rho_0 \sigma_0}{r^2} \right) rdr \]
Then $S(t)$ is an isometry for $\langle, \rangle$ and if $\vec \varphi \in H \times L^2$,
\begin{align*}
\MoveEqLeft \left\langle S( - \tau_n) (\vec \phi_{n}(\tau_n) - (\ell,0)) , \vec \varphi \right\rangle  = \left\langle \vec \phi_{n}(\tau_n) - (\ell,0) , S( - \tau_n) \vec \varphi \right\rangle \\
& = \left\langle \vec \phi_{n,L}(\tau_n) , S( - \tau_n) \vec \varphi \right\rangle + O(1/n) =  \left\langle \vec \phi_{n,L}(0) ,  \vec \varphi \right\rangle + O(1/n) \to \left\langle \vec \phi_{L}(0) ,  \vec \varphi \right\rangle.
\end{align*}
 
We now proceed to prove \eqref{eq:scat_state1}; it suffices to show
\[ \forall A \ge 0, \quad \| \vec \psi(t) - (\ell,0) - \vec \phi_{L}(t) \|_{H \times L^2(r \ge t-A)} \to 0. \]
 Let $A \ge 0$ and $N$ be such that for $t_N \ge 2A$. Notice that in a similar fashion as previously, there hold the following profile decomposition for the sequence $(\vec \phi_{n}(\tau_n) - (\ell,0) - \vec \phi_{N,L}(\tau_n)$
 \begin{multline*} 
\vec \phi_{n}(\tau_n,r) - (\ell,0) - \vec \phi_{N,L}(\tau_n,r) =  \vec \phi_L(\tau_n,r) - \vec \phi_{N,L}(\tau_n,r) \\
 +  \sum_{j=2}^J 
 \left( \frac{1}{\lambda_{j,n}^{d-1}} V_{j,L} \left( - \frac{t_{j,n}}{\lambda_{j,n}}, \frac{r}{\lambda_{j,n}} \right), \frac{1}{\lambda_{j,n}^d} \partial_t V_{j,L} \left( - \frac{t_{j,n}}{\lambda_{j,n}}, \frac{r}{\lambda_{j,n}} \right) \right) + \gamma_{J,n}(0,r)
\end{multline*}
Now recall \eqref{eq:phin_scat} and \eqref{phi_n=psi}, so that
\begin{equation} \label{eq:psi_scat_loc}
\| \vec \psi(t) - (\ell,0) - \vec \phi_{N,L}(t) \|_{H \times L^2(r \ge t-t_N/2)} \to 0 \quad \text{as } t \to +\infty.
\end{equation}
As $\phi_n$ and $\psi$ coincide for $r \ge t-t_n/2$, hence for $r \ge t -t_N/2$, we get
\[ \| \vec \phi_n(t) - (\ell,0) - \vec \phi_{N,L}(t) \|_{H \times L^2(r \ge t-t_N/2)} \to 0 \quad \text{as } t \to +\infty. \]
Using the Pythagorean expansion with cut off (Proposition \ref{prop:pyth_cutoff}), we deduce that for all profiles in the above profile decomposition,  the $H \times L^2(r \ge t-t_n/2)$ (semi-) norm tends to $0$, and more specifically for the first profile, we get
\[  \| \vec \phi_L(\tau_n) - \vec \phi_{N,L}(\tau_n) \|_{H \times L^2(r \ge \tau_n - t_N/2)} \to 0 \quad \text{as } n \to +\infty. \]
As $\vec \phi_L(t) - \vec \phi_{N,L}(t)$ is solution to the linear solution \eqref{LWl}, using monotonicity of the $H_\ell \times L^2$ norm on outside cones, we deduce
\[ \| \vec \phi_L(t) - \vec \phi_{N,L}(t) \|_{H \times L^2(r \ge t - t_N/2)} \to 0 \quad \text{as } t \to +\infty. \]
Combined with \eqref{eq:psi_scat_loc}, we get
\[ \| \vec \psi(t) - (\ell,0) - \vec \phi_{L}(t) \|_{H \times L^2(r \ge t-t_N/2)} \to 0 \quad \text{as } t \to +\infty. \]
As $t_N/2 \ge A$, this proves \eqref{eq:scat_state1}, and the proof is complete.
\end{proof}

We now turn to the analogous result regarding blow up wave maps.

\begin{prop} \label{prop:reg_part}
We assume (A1)-(A2).

Let $\vec \psi$ be a finite energy wave map that blows up at time $T^+(\vec \psi)$. Then there exist $\ell : =  \lim_{t \uparrow T^+(\vec \psi)} \psi(t, T^+(\vec \psi) - t)\in \q V$ and a wave map $\vec \phi$ defined on a neighborhood of $T^+(\vec \psi)$ such that for $t < T^+ (\vec\psi)$ and $t \ge \max \{ 0, T^-(\vec \phi) \}$
\[ \forall r \ge T^+(\vec \psi) - t, \quad \vec \psi(t,r) = \vec \phi(t,r), \]
and
\[ \| \vec \phi (t) - (\ell,0) \|_{H \times L^2([0,T^+(\vec \psi) - t])} \to 0 \quad \text{as} \quad t \uparrow T^+(\vec \psi). \]
\end{prop}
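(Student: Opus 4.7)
After a space--time translation and rescaling we may assume $T^+(\vec\psi)=1$. The plan is to mirror the strategy of Proposition~\ref{prop:scat_state}: first identify the trace $\ell$ of $\vec\psi$ on the light cone $\{r=1-t\}$, then construct $\vec\phi$ as the wave map obtained by gluing $\vec\psi$ on the exterior $\{r \ge 1-t,\, t<1\}$ to a small interpolation to $(\ell,0)$ inside, and show that this wave map extends through $t=1$. The existence of $\ell$ follows from Proposition~\ref{prop:sse2} (giving $E(\vec\psi(t);(1-t)/2,1-t)\to 0$) together with the $H\to L^\infty$ transfer argument used to derive \emph{(2')} inside Theorem~\ref{th:profiles} (namely the pointwise bound $\|g(\psi(t))\|_H^2\le (1+\|g'\|_{L^\infty}^2)E(\vec\psi(t))$ on the annulus combined with Lemma~\ref{lem:HLinfty}): one obtains $\|g(\psi(t))\|_{L^\infty([(1-t)/2,1-t])}\to 0$, and the discreteness of $\q V$ with the continuity of $\psi$ in $r$ and in $t$ pins down a single $\ell\in\q V$ with $\psi(t,1-t)\to\ell$.

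Next I would define, for $t_n\uparrow 1$,
\[ \phi_{n,0}(r)=\begin{cases} 2\dfrac{\psi(t_n,(1-t_n)/2)-\ell}{1-t_n}\,r+\ell & r\le (1-t_n)/2,\\ \psi(t_n,r) & r\ge (1-t_n)/2,\end{cases}\qquad \phi_{n,1}(r)=\begin{cases} 0 & r\le (1-t_n)/2,\\ \partial_t\psi(t_n,r) & r\ge (1-t_n)/2,\end{cases} \]
exactly as in the proof of Proposition~\ref{prop:scat_state}. Lemma~\ref{lem:extH} combined with the just-established smallness of $\psi(t_n,(1-t_n)/2)-\ell$ gives $\|\vec\phi_n(t_n)-(\ell,0)\|_{H\times L^2([0,(1-t_n)/2])}\to 0$, which together with Proposition~\ref{prop:sse2} yields $E(\vec\phi_n(t_n);0,1-t_n)\to 0$. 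If $T^+(\vec\phi_n)=:T^\ast_n\in (t_n,1]$, Struwe's bubbling criterion would force $\limsup_{t\uparrow T^\ast_n}E(\vec\phi_n(t);0,T^\ast_n-t)\ge E_\ell$, while backward-cone monotonicity bounds this by $E(\vec\phi_n(t_n);0,T^\ast_n-t_n)\le E(\vec\phi_n(t_n);0,1-t_n)=o(1)$, a contradiction for $n$ large; hence $T^+(\vec\phi_n)>1$.

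To extract $\vec\phi$, I would use that outside the backward light cone from $(1,0)$ the wave map $\vec\psi$ remains smooth up to $t=1$, so the pointwise trace $\vec\psi(1^-,r):=\lim_{t\uparrow 1}\vec\psi(t,r)$ exists for every $r>0$, in $H\times L^2(r\ge r_0)$ for each $r_0>0$. Extending this trace by $(\ell,0)$ at $r=0$ gives data in $\q H\times L^2$; local well-posedness produces a wave map $\vec\phi$ on a neighborhood of $t=1$, finite speed of propagation combined with backward uniqueness forces $\vec\phi=\vec\psi$ on $\{r\ge 1-t,\,t<1\}$, and the convergence $\|\vec\phi(t)-(\ell,0)\|_{H\times L^2([0,1-t])}\to 0$ follows from continuity of $\vec\phi$ in $\q H\times L^2$ at $t=1$ combined with $\phi(0)=\ell$. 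The main obstacle will be verifying that this extended datum really belongs to $\q H\times L^2$: finiteness of the energy is immediate from Fatou, but the boundary behavior $\phi(0)=\ell$ requires transferring the statement $\psi(t,1-t)\to\ell$ from the light-cone diagonal to the time-trace $\lim_{t\uparrow 1}\psi(t,r)$ at fixed $r\to 0^+$, which amounts to controlling the $t$-variation of $\psi(t,r)$ for $t\in[1-r,1)$ via the monotonicity of the exterior energy $E(\vec\psi(t);r,\infty)$; this is the only delicate point of the argument.
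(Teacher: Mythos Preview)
Your argument tracks the paper's closely: both establish $\ell$ via the vanishing self-similar energy, construct auxiliary wave maps $\vec\phi_n$ by affine extension inside the cone, and (in your case explicitly, via Struwe's criterion) check that $T^+(\vec\phi_n)>1$. The divergence is only in the final extraction of $\vec\phi$. You propose to take the pointwise trace $\vec\psi(1^-,r)$ for $r>0$, extend by $(\ell,0)$ at the origin, and then worry about whether this datum lies in $\q H\times L^2$ with the correct boundary value --- the obstacle you flag at the end.

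The paper sidesteps this entirely by using the $\vec\phi_n$ themselves. Finite speed of propagation gives $\vec\phi_n(1,r)=\vec\phi_m(1,r)$ for all $r\ge 2(1-\tau_{\min(n,m)})$, so one may consistently set $\vec\phi(1,r):=\vec\phi_n(1,r)$ on $r\ge 2(1-\tau_n)$ and let $n\to\infty$. Since each $\vec\phi_n$ is already a finite-energy wave map with $\phi_n(t,0)=\ell$ for all $t$ (this value is preserved by the flow), the assembled datum automatically has finite energy and $\phi(1,0)=\ell$; nothing needs to be checked at the origin. The energy identity $E(\vec\phi)=\q E:=\lim_{t\uparrow 1}E(\vec\psi(t);1-t,\infty)$ then yields the $H\times L^2$ convergence inside the cone by subtraction. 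Your $\vec\phi_n$ already encode all of this --- the detour through the trace of $\vec\psi$ is precisely what manufactures the difficulty you identify, and it is unnecessary.
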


\begin{proof}
\emph{Step 1.} Let us first construct $\vec \phi$. 

\begin{claim}
$\psi(t, T^+(\vec \psi) - t)$ has a limit $\ell \in \q V$ as $ t \uparrow T^+(\vec\psi)$.
\end{claim}

\begin{proof}
Indeed, we recall Proposition \ref{prop:sse} with $\lambda=1/2$:
\[ E(\vec \psi(t); (T^+(\vec \psi)-t)/2,T^+(\vec \psi)-t) \to 0 \quad \text{as} \quad t \uparrow T^+(\vec \psi). \]
In particular, we get
\[ \sup_{r \in [(T^+(\vec \psi)-t)/2,T^+(\vec \psi)-t]} | G(\psi(t,r)) - G(\psi(t,T^+(\vec \psi)-t)) | \to 0.\]
Also recall that for all $t$, $\| \psi(t) \|_{L^\infty} \le K$. As $G$ is a homeomorphism $\m R \to \m R$ from Assumption (A1)-(A2), we get that $G^{-1}$ is uniformly continuous on $[G(-K),G(K)]$ and from there,
\[ \sup_{r \in [(T^+(\vec \psi)-t)/2,T^+(\vec \psi)-t]} | \psi(t,r) - \psi(t,T^+(\vec \psi)-t) | \to 0. \]
Now we use again the vanishing of the energy in the self-similar region:
\[ \int_{(T^+(\vec \psi)-t)/2}^{T^+(\vec \psi)-t} |g(\psi(t,r)|^2 \frac{dr}{r} \to 0, \]
so that there exist $r(t) \in [(T^+(\vec \psi)-t)/2,T^+(\vec \psi)-t]$ such that $g(\psi(t,r(t)) \to 0$ as $t \uparrow T^+(\vec \psi)$. 

By the previous uniform convergence and continuity of $g$, we derive $g(\psi(t,T^+(\vec \psi)-t)) \to 0$.
Now $t \mapsto \psi(t,T^+(\vec \psi)-t)$ is continuous on $[0,T^+(\vec \psi))$. As $\q V = g^{-1}(\{0 \})$ is discrete, this implies that $\psi(t,T^+(\vec \psi)-t)$ has limit $\ell \in \q V$ as $t \to T^+(\vec \psi)$, as desired.
\end{proof}

Now consider any sequence of time $\tau_n \uparrow T^+(\vec \psi)$ and the wave maps $\vec \phi_n$ defined at time $\tau_n$ as follows 
\begin{align*}
\phi_n(\tau_n,r) & = \begin{cases}
\ds \ell + \frac{\psi(\tau_n,T^+(\vec \psi) - \tau_n) -\ell}{T^+(\vec \psi) - \tau_n} r & \text{if } 0 \le r \le T^+(\vec \psi) - \tau_n \\
\psi(\tau_n,r) & \text{if } r \ge T^+(\vec \psi) - \tau_n
\end{cases} \\
\partial_t \phi_n(\tau_n,r) & = \begin{cases}
0 & \text{if } 0 \le r \le T^+(\vec \psi) - \tau_n \\
\partial_t \psi(\tau_n,r) & \text{if } r \ge T^+(\vec \psi) - \tau_n. 
\end{cases}
\end{align*}
Let $m > n$. By definition and finite speed of propagation, we have
\[ \forall r \ge T^+(\vec \psi) - \tau_n - |\tau_n - \tau_m|, \quad \vec \phi_n(\tau_m,r) = \vec \phi_m(\tau_m,r), \]
hence, using again finite speed of propagation,
\[ \forall r \ge 2T^+(\vec \psi) - 2\tau_n, \quad \vec \phi_n(T^+(\vec \psi),r) = \vec \phi_m(T^+(\vec \psi),r), \]
It is then meaningful to define $\vec \phi$ to be the wave map with initial data at time $T^+(\vec \psi)$:
\begin{equation} \label{eq:def_phi}
 \vec \phi(T^+(\vec \psi) ,r) = \vec \phi_n(T^+(\vec \psi),r) \quad \text{for } r \ge  2T^+(\vec \psi) - 2\tau_n.
\end{equation}

\bigskip

\emph{Step 2.} Properties of $\vec \phi$.

Let $\q E(t) := E(\vec \psi(t); T^+(\vec \psi)-t,\infty)$. This is a non-decreasing function, so let us define here
\[ \q E := \lim_{t \uparrow  T^+(\vec \psi)} \q E(t). \]
Of course $\q E \le E(\vec \psi)$. Then by construction of $\phi_n$ and as $\phi_n(\tau_n,T^+(\vec \psi) - \tau_n) \to \ell$, there holds
\[ E(\vec \phi_n) \to \q E. \]
Hence by monotone convergence, we deduce that
\begin{multline*}
E(\vec \phi) = \lim_{n} E(\vec \phi(T^+(\vec \psi)); T^+(\vec \psi) - 2\tau_n, +\infty) \\
\le \lim_{n} E(\vec \phi_n(T^+(\vec \psi) - \tau_n); T^+(\vec \psi) - \tau_n, +\infty) = \q E,
\end{multline*}
that is $E(\vec \phi) \le \q E$. On the other side, 
\[ E(\vec \phi) =  E(\vec \phi(T^+(\vec \psi)); 0, +\infty) \ge \lim_{n} E(\vec \phi_n(T^+(\vec \psi) - \tau_n); T^+(\vec \psi) - \tau_n, +\infty) = \q E, \]
and finally we obtain that $\vec \phi$ has finite energy
\begin{equation} \label{eq:en_phi}
E(\vec \phi) = \q E.
\end{equation}

By definition of $\vec \phi$ \eqref{eq:def_phi} and finite speed of propagation, for all $t \in (T^-(\vec \phi), T^+(\vec\psi))$ we have:
\[ \forall r \ge 3T^+(\vec \psi) - t - 2\tau_n, \quad \vec \phi(t,r) = \vec \phi_n(t,r), \]
which yields for $t=\tau_n$
\[ \forall r \ge 3T^+(\vec \psi) -  3\tau_n, \quad \vec \phi(\tau_n,r) = \vec \phi_n(\tau_n,r) = \vec \psi(\tau_n,r), \]
Hence, again by finite speed of propagation, we conclude that for $t \in (T^-(\vec \phi), \tau_n]$
\[ \forall r \ge 3T^+(\vec \psi) - t - 2\tau_n, \quad \vec \phi(t,r) = \vec \psi(t,r), \]
Letting $n \to +\infty$, we finally obtain that for $t \in (T^-(\vec \phi), T^+(\vec\psi))$:
\begin{equation} \label{eq:phi=psi_olc}
\forall r > T^+(\vec \psi) - t, \quad \vec \phi(t,r) = \vec \psi(t,r).
\end{equation}

By continuity, for $t < T^+(\vec \psi)$, it also holds for $r = T^+(\vec \psi) - t$. In particular,
\[ \vec \phi(t,T^+(\vec \psi)-t) \to \ell \quad \text{as} \quad t \uparrow T^+(\vec \psi). \]

\bigskip

Now by definition of $\q E$, \eqref{eq:phi=psi_olc} implies that
\[ E( \vec \phi(t,r); T^+(\vec \psi) - t, +\infty) \to \q E = E(\vec \phi), \]
and by difference
\[ E( \vec \phi(t,r); 0, T^+(\vec \psi) - t) \to 0. \]
As $\vec \phi(t,T^+(\vec \psi)-t) \to \ell$, this implies 
\[ \sup_{r \in [0,T^+(\vec \psi)-t]} | \phi(t,r) - \ell | \to 0 \quad \text{as} \quad t \uparrow T^+(\vec \psi). \]
Therefore $\phi(0) =\ell$ and
\[ \| \vec \phi(t) - (\ell,0) \|_{H \times L^2([0,T^+(\vec \psi)-t])} \to 0. \qedhere \]
\end{proof}

\section{\texorpdfstring{$H \times L^2$}{HxL2} convergence for the bubble decomposition}

In this Section we consider again a sequence of wave maps, and improve the result of Theorem \ref{th:profiles} under the extra assumption (A3), that is 
\[ \forall \ell \in \q V, \quad g'(\ell) \in \{-1,1\}. \]
We show that that the error term in the bubble decomposition does in fact convergence to 0 in $H \times L^2$. This is the only step in this paper where we use (A3): it  guaranties that Proposition \eqref{prop:en_conc} holds for the linearized problem \eqref{LWl}. This Section is independent of Section 4 and 5.

\begin{prop} \label{prop:Hconv}
We assume (A1)-(A2)-(A3).

Let $\psi_n$ be a sequence of wave maps as in Theorem \ref{th:profiles}, and we use its notations.

We recall the existence of
harmonic maps $Q_1, \dots Q_J$ and scales $\lambda_{J,n} \ll \cdots \lambda_{1,n} \ll 1$ and denote
\[ \vec b_n(t,r) : = \vec \psi_n (t,r) - (\ell,0) - \sum_{j=1}^J (Q_j(r/\lambda_{j,n}),0). \]
Then
\[ \| \vec b_n(0) \|_{H \times L^2([0,R])} \to 0 \quad \text{as} n \to +\infty. \]
\end{prop}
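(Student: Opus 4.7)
The strategy is a proof by contradiction invoking Proposition \ref{prop:en_conc}, which is the sole place where assumption (A3) enters. Suppose, along a subsequence, $\| \vec b_n(0) \|_{H \times L^2([0,R])} \ge \eta > 0$. By parts (1) and (2) of Theorem \ref{th:profiles}, for any fixed $A \ge 1$ the mass of $\vec b_n(0)$ on $[0, A \lambda_{J,n}]$ and on each profile annulus $[\lambda_{j,n}/A, A \lambda_{j,n}]$ tends to $0$. With the conventions $\lambda_{0,n} := R$ and $\lambda_{J+1,n} := 0$, the persistent mass must lie in some transition annulus $T_n := [A\lambda_{j^*+1,n}, \lambda_{j^*,n}/A]$; up to increasing $A$ and extracting, one arranges $\| \vec b_n(0) \|_{H \times L^2(T_n)} \ge \eta/(2(J+2))$. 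Setting $\ell^* := Q_{j^*}(0)$ (or $\ell^* := \ell$ if $j^* = 0$), the telescoping computation used in the proof of Theorem \ref{th:profiles} yields
\[ \vec b_n(0) = \vec \psi_{\sigma(n)}(0) - (\ell^*, 0) + o_n(1) \quad \text{in } H \times L^2(T_n). \]

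Introduce a smooth cutoff $\chi_n$ equal to $1$ on a slight shrinking of $T_n$ and supported in $T_n$; set $\vec d_n := \chi_n \cdot (\vec \psi_{\sigma(n)}(0) - (\ell^*, 0))$. The $L^2$-continuity of $\partial_t \psi_n$ combined with hypothesis (3) (letting $\lambda \to 0$ in the time average) forces $\| \partial_t \psi_n(0) \|_{L^2} \to 0$, so the velocity component of $\vec d_n$ vanishes in $L^2$; part (1) of Theorem \ref{th:profiles} at the boundary scales absorbs the cutoff cost, so $\| \vec d_n \|_{H \times L^2} \ge \eta/(4(J+2))$. Let $\vec D_n$ be the solution of \eqref{LWl} around $\ell^*$ with initial data $\vec d_n$ at time $0$, split as $\vec D_n = \vec D_n^+ + \vec D_n^-$ into its even/odd parts in time. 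Then $\vec D_n^-$ has $H_{\ell^*} \times L^2$-norm $o_n(1)$ (driven by $\| \partial_t d_n \|_{L^2}$), while $\vec D_n^+(0) = \vec d_n$ and $\partial_t D_n^+(0) = 0$. Assumption (A3) gives $g'(\ell^*) \in \{ -1, 1 \}$, so Proposition \ref{prop:en_conc} applies to $\vec D_n^+$ and yields, for every $\tau$,
\[ \| \vec D_n^+(\tau) \|_{H \times L^2(r \ge |\tau|)}^2 \ge \beta(\ell^*) \cdot \frac{\eta}{4(J+2)}. \]

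Pick $\tau_n := \lambda_{j^*,n}/(10 A)$, so that $\lambda_{j^*+1,n} \ll \tau_n \ll \lambda_{j^*,n}$. On $\Supp(\chi_n)$ the wave map $\psi_{\sigma(n)} - \ell^*$ is uniformly small (Theorem \ref{th:profiles}(3)), $\partial_t \psi_n(0)$ is $L^2$-small, and the nonlinear error $f(\ell^* + u) - g'(\ell^*)^2 u$ is quadratic in $u$. A perturbation argument in the spirit of Corollary \ref{cor:psi_scat}, after the rescaling bringing the transition scale to unit size, then shows that $\vec \psi_{\sigma(n)}(\tau) - (\ell^*, 0)$ stays $o_n(1)$-close in $H \times L^2$ to $\vec D_n(\tau)$ on $[-\tau_n, \tau_n]$, within the region of influence of $\Supp(\chi_n)$. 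By finite speed of propagation the support of $\vec D_n(\tau_n)$ in $\{ r \ge \tau_n \}$ lies in $[\tau_n, \lambda_{j^*,n}/A + \tau_n] = [\tau_n, 11 \tau_n]$, and the telescoping identity applied once more (the harmonic-map tails freeze to a constant on this annulus) transfers the $\vec D_n^+$-lower bound into $\| \vec b_n(\tau_n) \|_{H \times L^2([\tau_n, 11\tau_n])}^2 \ge c \eta$. But Theorem \ref{th:profiles}(1) applied with $\lambda_n := \sqrt{11}\tau_n$ (and the constant $\sqrt{11}$ in place of $A$) yields $\| \vec b_n(\tau_n) \|_{H \times L^2([\tau_n, 11\tau_n])} \to 0$, a contradiction. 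The main obstacle is making the nonlinear--linear comparison rigorous: it must absorb the quadratic self-interaction on the cutoff support as well as the interference from the harmonic-map bubbles located at the well-separated scales $\lambda_{j,n}$, and rests on Strichartz-type estimates for \eqref{LWl} around $\ell^*$ obtained via the transformation $\q T$ of Section 2.3.
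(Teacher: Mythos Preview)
Your overall architecture matches the paper's: localize the residual mass of $\vec b_n(0)$ in a transition annulus between two consecutive bubble scales, replace the nonlinear flow by the linear flow \eqref{LWl} around the relevant $m\in\q V$ via a small-data perturbation argument, invoke the exterior energy channel of Proposition~\ref{prop:en_conc}, and contradict conclusion~(1) of Theorem~\ref{th:profiles}. The use of (A3) is also correctly isolated.

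However, your choice of terminal time creates a genuine gap. With $\tau_n=\lambda_{j^*,n}/(10A)$ and $A$ fixed, the ratio $\tau_n/\lambda_{j^*,n}=1/(10A)$ is a constant, so $\tau_n\ll\lambda_{j^*,n}$ is false and $Q_{j^*}$ does \emph{not} freeze on $[\tau_n,11\tau_n]$. More seriously, the exterior channel for $\vec D_n^+(\tau_n)$ lives on the full interval $[\tau_n,11\tau_n]$, whereas the identification $\tilde\psi_n=\psi_n$ (hence the nonlinear--linear comparison) is only available, by finite speed of propagation, on the interior of the cone emanating from $\{\chi_n=1\}$; with your numerics this is at best $[\tau_n,4\tau_n]$ (or $[\tau_n,8\tau_n]$ for a ``slight shrinking''). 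Nothing prevents the channel energy from sitting in the complementary strip near $11\tau_n$, where you have no control on $\psi_n-\ell^*$ versus $D_n$, so the transfer to $\vec b_n(\tau_n)$ is unjustified.

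The paper fixes this by taking the terminal time $\tau_{2,n}=\lambda_{j-1,n}/\beta_n$ with $\beta_n\to\infty$ \emph{strictly slower} than $\alpha_n$, so that $\tau_{2,n}$ lies \emph{beyond} the outer edge $\lambda_{j-1,n}/\alpha_n$ of the mass region. One then cuts off $b_{n,L}(0)$ only at $4\tau_{2,n}$ (where $b_n(0)$ is already small), so that $\|\vec b_{n,L}(0)\|_{H\times L^2(r\ge\tau_{2,n})}\to0$ and, by exterior monotonicity, the channel energy is forced into $[\tau_{2,n},2\tau_{2,n}]$, which is entirely contained in the region where the comparison with $\psi_n$ holds. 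Because $\tau_{2,n}\gg$ all smaller bubble scales, the comparison itself must be done in two stages (first to $\tau_{1,n}=\beta_n\lambda_{j,n}$, re-truncate, then to $\tau_{2,n}$), each time invoking Corollaries~\ref{cor:psi_dispersion} and~\ref{cor:psi_scat} on an auxiliary wave map that has been flattened outside the annulus. Your single-step scheme with a fixed $A$ cannot close this loop.
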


\begin{proof}
By Theorem \ref{th:profiles}, we recall that $\vec b_n$ has vanishing energy at all scale, but we only use this property for the scales $\lambda_{0,n}, \dots \lambda_{J,n}$ here. More precisely, we will use the following convergence: there exists an increasing sequence $\alpha_n \to +\infty$ (and we also denote $\lambda_{0,n} =1$), such that 
\begin{enumerate}
\item $\| \vec b_n(t) \|_{L^\infty([-\alpha_n \lambda_{J,n}, \alpha_n \lambda_{J,n}], H \times L^2(r \le \alpha_n \lambda_{J,n}))} \to 0$, 
\item $\| \vec b_n(t) \|_{L^\infty([-\alpha_n\lambda_{j,n}, \alpha_n \lambda_{j,n}], H \times L^2(\lambda_{j,n}/\alpha_n \le r \le \alpha_n \lambda_{j,n}))} \to 0$ for all $j =1, \dots, J-1$,
\item $\| \vec b_n(t) \|_{L^\infty([-A,A], H \times L^2(r \ge \lambda_{0,n}/\alpha_n))} \to 0$,
\item $\| \partial_t b_n(0) \|_{L^2(r \le R)} \to 0$.
\item $\| b_n(0) \|_{L^\infty(r \le R)} \to 0$.
\item $\sup_{t \in [-A,A]} \| \vec b_n (t)\|_{H \times L^2(r \le R)}$ is bounded.
\end{enumerate}
In particular,
\begin{equation} \label{eq:bn_conv1}
\| b_n(0) \|_{L^\infty([0,R])} \to 0, \quad \text{and} \quad \| \partial_t b_n(0) \|_{L^2([0,R])} \to 0.
\end{equation}

We now argue by contradiction. Assume that $\| \vec b_n (0) \|_{H \times L^2([0,R])} \not\to 0$ as $n \to +\infty$. Up to extracting, we can assume that for some $\delta_0 > 0$,
\begin{equation} 
\forall n, \quad \| b_n(0) \|_{H([0,R])} \ge \delta_0.
\end{equation}
Due to convergences (1)-(2)-(3), and up to extracting further, there exists $j \in \llbracket 1, \dots, J \rrbracket$ and $\delta_1 >0$ such that
\begin{equation} \label{contradiction}
\| \vec b_n(0) \|_{H \times L^2([0, \alpha_n \lambda_{j,n}])} \to 0, \quad \text{and} \quad \forall n \in \m N, \quad \| b_n(0) \|_{H ([\alpha_n \lambda_{j,n}, \frac{\lambda_{j-1,n}}{\alpha_n}])} \ge \delta_1.
\end{equation}

Let $\beta_n \uparrow +\infty$ such that $\beta_n = o(\alpha_n)$, $\beta_n^2 = o(\lambda_{j-1,n}/\lambda_{j,n})$ and for all $k =0, \dots, J$,
\[ \beta_n = o ( \lambda_{k-1,n}/\lambda_{k,n}), \]
 as $n \to +\infty$.

We consider two times:
\[ \tau_{1,n}  := \beta_n \lambda_{j,n}, \quad \tau_{2,n} = \frac{\lambda_{j-1,n}}{\beta_n}. \]

We denote $m = Q_j(\infty)$: the linearized flow which will interest is now given by  (LW$_m$). Let 
$\vec b_{n,L}$ be the linear solution to (LW$_m$) with initial data at time 0:
\begin{align*}
b_{n,L}(0,r) & = \begin{cases}
b_n(0,r) & \text{if } r \le 4 \tau_{2,n} \\
\ds \frac{b_n(0,4 \tau_{2,n})}{\tau_{2,n}} (5\tau_{2,n} - r) & \text{if }  4 \tau_{2,n}  \le r \le  5 \tau_{2,n} \\
0 & \text{if } r \ge  5 \tau_{2,n},
\end{cases} \\
\partial_{t} b_{n,L}(0) & =0.
\end{align*}

\begin{claim}
We have
\begin{equation} \label{eq:psi_t1}
\| \vec \psi_n(\tau_{1,n} )  - (m,0) -  \vec b_{n,L}(\tau_{1,n} ) \|_{H \times L^2([\tau_{1,n}  , 3 \tau_{2,n}])} \to 0 \quad \text{as} \quad n \to +\infty.
\end{equation}
\end{claim}

\begin{proof}
By definition of $\vec b_n$ and (2), notice that
\[ \| \vec \psi_n(\tau_{1,n}  ) - (m,0) - \vec b_{n}(\tau_{1,n}  ) \|_{H \times L^2([\tau_{1,n}  , 3 \tau_{2,n}])} \to 0. \]
Hence we are led to compare $\vec b_{n}$ and $\vec b_{n,L}$.

First consider the interval $[\tau_{1,n}  , 2\tau_{1,n}  ]$. Due to finite speed of propagation, we have
\[ \| \vec b_{n,L}(\tau_{1,n}  ) \|_{H_\ell \times L^2( [\tau_{1,n}  , 2\tau_{1,n}  ])} \le \| \vec b_{n,L}(0) \|_{H_\ell \times L^2( [0, 3\tau_{1,n}  ])} \to 0 \]
by \eqref{contradiction}.
Using (2), we conclude
\begin{align*}
\MoveEqLeft \| \vec b_{n}(\tau_{1,n}  ) - (m,0) - \vec b_{n,L}(\tau_{1,n}  ) \|_{H \times L^2( [\tau_{1,n}  , 2\tau_{1,n}  ])} \\
& \le \| \vec b_{n}(\tau_{1,n} )  \|_{H_\ell \times L^2( [\tau_{1,n}  , 2\tau_{1,n}  ])} + \|  \vec b_{n,L}(\tau_{1,n}  ) \|_{H \times L^2( [\tau_{1,n}  , 2\tau_{1,n}  ])}  \to 0.
\end{align*}

We now work on $[2\tau_{1,n}, 3\tau_{2,n}]$.
Consider the wave maps $\vec \vartheta_n$ with initial data defined as follows
\begin{align*}
\vartheta_n(0,r) & = \begin{cases} 
\ds m + \frac{ \psi_n(0,\tau_{1,n}) - m}{ \tau_{1,n}  } r & \text{if } 0 \le r \le  \tau_{1,n}  \\
\ds \psi_n(0,r)  & \text{if } \ds \tau_{1,n}   \le r \le 4 \tau_{2,n} \\
\ds m + \frac{ \psi_n(0,4\tau_{2,n}) - m}{\tau_{2,n}} (5 \tau_{2,n} - r) & \text{if } \ds 4\tau_{2,n} \le r \le 5 \tau_{2,n} \\
\ds m& \text{if } \ds r \ge 5 \tau_{2,n}
\end{cases} \\
\partial_t \vartheta_n(0,r) & = \begin{cases} 
\partial_t \psi_n(0,r)  & \text{if } \ds \tau_{1,n}   \le r \le 4 \tau_{2,n} \\
0 & \text{otherwise.}
\end{cases}
\end{align*}
$\vec \vartheta_n(0)$ is an extension of $\vec \psi_n(0)$ with adequate affine reconnection. $\vec \vartheta_n$ coincide with $\vec \psi_n$ on $[\tau_{1,n}  , 4 \tau_{2,n}]$ at time 0, and hence at time $\tau_{1,n} \le \tau_{2,n}$ we have 
\begin{equation} \label{eq:psi=theta}
\forall r \in [2\tau_{1,n}  , 3 \tau_{2,n}], \quad
\vec \psi_n(\tau_{1,n}  ,r) = \vec \vartheta_n(\tau_{1,n}  ,r).
\end{equation}

Also $\vec \vartheta_n (0) = \vec \vartheta_n(\infty) = m$ and 
\[ \| \vartheta_n - m \|_{L^\infty} \to 0. \]
Because $ \psi_n(0,\tau_{1,n}  )$ and $ \psi_n(0,4 \tau_{2,n})$ tend to $m$, and with the equivalence of energy and $H \times L^2$ norm for $L^\infty$-small perturbation of $m \in \q V$ (Lemma \ref{lem:EH}), we infer that
\[ \vec \vartheta_n(0) \text{ is bounded in } H \times L^2. \]
And finally, we clearly have
\[ \| \partial_t \vartheta_n(0) \|_{L^2} \to 0. \]
Hence Corollary \ref{cor:psi_dispersion} combined with Corollary \ref{cor:psi_scat} allows to conclude that $\vec \vartheta_n$ is globally defined on $\m R$ and
\begin{equation} \label{eq:theta=lin}
\sup_{t \in \m R} \| \vec \vartheta_n(t) - (m,0) - \vec \vartheta_{n,L}(t) \|_{H \times L^2} \to 0,
\end{equation}
where $\vec \vartheta_{n,L}$ is the linear solution to (LW$_m$) with initial data $\vec \vartheta_{n,L}(0) = \vec \vartheta_{n}(0) - (m,0)$ at time $0$.

Now notice that $\vec \vartheta_{n,L} - \vec b_{n,L}$ is a solution to (LW$_m$) which also satisfies
\[ \| \vec \vartheta_{n,L}(0) - \vec b_{n,L}(0) \|_{H_m \times L^2} \to 0. \]
By conservation of the $H_m \times L^2$ norm, we deduce that
\begin{equation} \label{eq:theta=b_lin}
\| \vec \vartheta_{n,L}(\tau_{1,n}  ) - \vec b_{n,L}(\tau_{1,n}  ) \|_{H_m \times L^2} \to 0. \end{equation}
Combining \eqref{eq:psi=theta}, \eqref{eq:theta=lin} and \eqref{eq:theta=b_lin} yields
\[ \| \vec \psi_n(\tau_{1,n}  ) - (m,0) - \vec b_{n,L}(\tau_{1,n}  ) \|_{H \times L^2([\tau_{1,n}, 3 \tau_{2,n}])} \to 0. \]
This proves the claim.
\end{proof}

We now evolve up to time $\tau_{2,n}$.

\begin{claim} \label{claim:psi=bn_2}
We have
\[ \| \vec \psi_n(\tau_{2,n})  - (m,0) -  \vec b_{n,L}(\tau_{2,n}) \|_{H \times L^2([\tau_{2,n}, 2 \tau_{2,n}])} \to 0 \quad \text{as} \quad n \to +\infty. \]
\end{claim}

\begin{proof}
Let $\vec \varpi_{n}$ be the wave map with initial data at time $\tau_{1,n}$ as follows:
\begin{align*}
\varpi_{n}(\tau_{1,n} ,r) & = \begin{cases} 
\ds m + \frac{ \psi_n(\tau_{1,n} ,\tau_{1,n}  ) - m}{ \tau_{1,n} } r & \text{if } 0 \le r \le  \tau_{1,n}  \\
\ds \psi_n(\tau_{1,n} ,r)  & \text{if } \ds \tau_{1,n}   \le r \le 3 \tau_{2,n} \\
\ds m + \frac{ \psi_n(0,3\tau_{2,n}) - m}{\tau_{2,n}} (4 \tau_{2,n} - r) & \text{if } \ds  3 \tau_{2,n} \le r \le 4 \tau_{2,n} \\
\ds m & \text{if } \ds r \ge 4 \tau_{2,n}
\end{cases} \\
\partial_t \varpi_{n}(\tau_{1,n} ,r) & = \begin{cases} 
\partial_t \psi_n(\tau_{1,n} ,r)  & \text{if } \ds \tau_{1,n}   \le r \le 3 \tau_{2,n} \\
0 & \text{otherwise.}
\end{cases}
\end{align*}
Notice that $E(\vec \varpi_{n}(\tau_{1,n} );0, 3\tau_{1,n} ) \to 0$ (consider separately the intervals $[0,\tau_{1,n} ]$ and $[\tau_{1,n} ,3\tau_{1,n} ]$), hence $\vec b_{n,NL}$ is defined at least on the time interval $(-2\tau_{1,n} , 4\tau_{1,n} )$, and by monotonicity of the energy along cones
\begin{equation} \label{eq:bn2_conv1} 
E(\vec \varpi_{n}(0);0, 2\tau_{1,n} ) \to 0.
\end{equation}
Similarly, $E(\vec \varpi_{n}(\tau_{1,n} ); 3\tau_{2,n} - 2\tau_{1,n} , + \infty) \to 0$ because
$3 \tau_{2,n} - 2\tau_{1,n}  \ge 2 \tau_{2,n}$ (consider separately the intervals $[3 \tau_{2,n} - 2\tau_{1,n} , 3\tau_{2,n}]$, $[3 \tau_{2,n}, 4 \tau_{2,n}]$, and $[4 \tau_{2,n}, \infty)$). Hence, again by monotonicity of the energy along light cones,
\begin{equation} \label{eq:bn2_conv2} 
E \left(\vec \varpi_{n}(0); 3\tau_{2,n} - \tau_{1,n} , \infty \right) \to 0.
\end{equation}

Let us show that 
\begin{equation} \label{eq:bn2_conv3}
\| \varpi_{n}(0) - (m,0) \|_{L^\infty} + \| \partial_t \varpi_{n}(0) \|_{L^2} \to 0.
\end{equation}

Indeed, by finite speed of propagation,
\[ \forall r \in [2 \tau_{1,n} , 3 \tau_{2,n} - \tau_{1,n} ], \quad \vec \varpi_{n}(0,r) = \vec \psi_n(0,r). \]
Hence the convergence on this interval follows from \eqref{eq:bn_conv1} which implies 
\[ \| \vec \psi_n(0) -  (m,0) \|_{H \times L^2([2\tau_{1,n}, 3 \tau_{2,n}])} \to 0. \]
Therefore, \eqref{eq:bn2_conv3} follows  from this combined with \eqref{eq:bn2_conv1}, \eqref{eq:bn2_conv2} (recall the equivalence of the energy and the $H \times L^2$ norm under the assumption of small energy, Lemma \ref{lem:EH}).

Due to \eqref{eq:bn2_conv3}, we conclude from Corollary \ref{cor:psi_dispersion} combined with Corollary \ref{cor:psi_scat} that $\vec \varpi_{n}$ is global and if we denote $\vec \varpi_{n,L}$ the linear solution to (LW$_m$) with initial data $\vec \varpi_{n}(0) - (m,0)$ at time 0 :
\begin{equation} \label{eq:bn_disp}
\| \varpi_{n,L} \|_{S(\m R)} + \sup_{t \in \m R} \| \vec \varpi_{n}(t) - (m,0) - \vec \varpi_{n,L}(t) \|_{H \times L^2} \to 0 \quad \text{as} \quad n \to +\infty.
\end{equation}
Notice that at time $\tau_{1,n} $, we have
\begin{align*}
\MoveEqLeft \| \vec b_{n,L}(\tau_{1,n} ) - \vec \varpi_{n,L}(\tau_{1,n} ) \|_{H \times L^2(([\tau_{1,n}  , 3 \tau_{2,n}])} \\
& \le \| \vec b_{n,L}(\tau_{1,n} ) + (m,0) - \vec \psi_n(\tau_{1,n} ) \|_{H \times L^2([\tau_{1,n}  , 3 \tau_{2,n}])} \\
& \qquad + \| \vec \varpi_n(\tau_{1,n} ) - (m,0) - \vec \varpi_{n,L}(\tau_{1,n} ) \|_{H \times L^2(([\tau_{1,n}  , 3 \tau_{2,n}])} \\
& \to 0.
\end{align*}
Now $\vec b_{n,L} -  \vec \varpi_{n,L}$ is a solution to (LW$_m$): due to monotonicity of the $H_m \times L^2$ along cones, and as $[\tau_{2,n}, 2 \tau_{2,n}] \subset [\tau_{2,n}, 3 \tau_{2,n} -(\tau_{2,n} - \tau_{1,n})]$, we deduce
\[ \| \vec b_{n,L}(\tau_{2,n}) - \vec \varpi_{n,L}(\tau_{2,n}) \|_{H \times L^2([\tau_{2,n}, 2 \tau_{2,n}])} \to 0. \]
Hence, we get from \eqref{eq:bn_disp}
\[ \| \vec \varpi_{n}(\tau_{2,n}) - (m,0) - \vec b_{n,L}(\tau_{2,n}) \|_{H \times L^2([\tau_{2,n}, 2\tau_{2,n}])} \to 0. \]
To complete the proof, it suffices to notice that $\vec \varpi_n$ and $\vec \psi_n$ coincide at time $\tau_{1,n}$ on the interval $[\tau_{1,n}, 3\tau_{2,n}]$, so that at time $\tau_{2,n}$, they coincide on the interval
$[\tau_{2,n}, 2\tau_{2,n}]$.
\end{proof}

We can now easily reach a contradiction. Indeed from (2)-(3)-(4), equivalence of energy and $H \times L^2$  for $L^\infty$ perturbation (Lemma \ref{lem:EH}) and the definition of $\beta_n$, we have
\begin{align}
\MoveEqLeft\| \vec \psi(\tau_{2,n}) - (m,0) \|_{H \times L^2([\tau_{2,n}, 2 \tau_{2,n}])}^2 \nonumber \\
& \le C E( \vec \psi(\tau_{2,n}) - (m,0); \tau_{2,n}, 2 \tau_{2,n}) \nonumber \\
& \le C \sum_{k=1}^J E( (Q_k (\cdot/ \lambda_{k,n}),0) ; \tau_{2,n}, 2 \tau_{2,n}) + o(1) \nonumber \\
& \le C \sum_{k=1}^J E \left( (Q_k,0); \frac{\lambda_{j,n}}{\beta_n \lambda_{k,n}} , 2\frac{\lambda_{j,n}}{\beta_n \lambda_{k,n}} \right) + o(1) \nonumber \\
& \to 0. \label{psi->0_1}
\end{align}
However, due to Proposition \ref{prop:en_conc}, we have
\[ \| \vec b_{n,L} (\tau_{2,n}) \|_{H \times L^2([\tau_{2,n}, +\infty))} \ge \beta(1) \| b_{n,L} (0) \|_{H}  \ge \beta(1) \delta_1. \]
From the definition of $\vec b_{n,L}(0)$, we see that
\[ \| \vec b_{n,L}(0) \|_{H_\ell \times L^2([\tau_{2,n}, +\infty))} \to 0, \]
hence by monotonicity of the $H_m \times L^2$ norm along cones, we get
\[ \| \vec b_{n,L} (\tau_{2,n}) \|_{H \times L^2([2 \tau_{2,n}, +\infty))}  \to 0. \]
We can then conclude:
\begin{equation} \label{bn->1_1}
\| \vec b_{n,L} (\tau_{2,n}) \|_{H \times L^2([\tau_{2,n}, 2\tau_{2,n}])} \ge \beta(1) \delta_1 >0.
\end{equation}
Then \eqref{psi->0_1} and \eqref{bn->1_1} are in contradiction with Claim \ref{claim:psi=bn_2}. 

This shows that $\| b_n(0) \|_{H([0,R])} \to 0$, and so $\| \vec b_n(0) \|_{H \times L^2([0,R])} \to 0$, as desired.
\end{proof}

\section{Proof of the Main Theorem}

\subsection{Global case}

Let $\vec \psi$ be a finite energy wave map such that $T^+(\vec \psi) = +\infty$ and let $\ell: =\psi(\infty) \in \q V$. Then Proposition \ref{prop:scat_state} (and Proposition \ref{prop:sse}) provides us with a scattering state $\vec \phi_L$ and $\alpha(t) = o(t)$ such that:
\begin{equation} \label{eq:out_lc}
\| \vec \psi(t) - (\ell,0) - \vec \phi_L(t) \|_{H \times L^2(r \ge \alpha(t))} \to 0.
\end{equation}
Recall Proposition \ref{prop:light_cone_en_conc}, from which a weak version yields
\begin{equation} \label{eq:ss_lc}
\| \vec \phi_L(t)  \|_{H \times L^2([0,t/2])} \to 0.
\end{equation}
On the other side, Corollary \ref{cor:psi_t->0} yields a sequence $t_n$ such that
\[ \sup_{s, 0<s\le t_n/4} \frac{1}{s} \int_{t_n - s}^{t_n + s} \int_0^{t/2}  | \partial_t \psi(t,r)|^2 rdrdt \to 0 \quad \text{as} \quad n \to + \infty. \]
Consider now the sequence of wave maps
\[ \vec \psi_n(t,r) = ( \psi(t_n + t_n t, t_n r ), t_n \partial_t \psi(t_n + t_n t, t_n r), \]
Then for $R=1/2$ and $A=1/4$, the sequence $\vec \psi_n$ satisfies the conditions of Theorem \ref{th:profiles}, which we can combine with Proposition \ref{prop:Hconv}. Hence there exists harmonic maps $Q_1, \dots , Q_J$ and scales $\lambda_{J,n} \ll \cdots \ll \lambda_{1,n} \ll 1$ such that
\[ \left\| \vec \psi_n(0) - (\ell,0) - \sum_{j=1}^J (Q_j(\cdot/\lambda_{j,n}) - Q_j(\infty),0)  \right\|_{H \times L^2([0,1/2])} \to 0. \]
Unscaling, and denoting $\mu_{j,n} = \lambda_{j,n}/t_n$, we have
\begin{equation} \label{eq:in_lc}
\left\| \vec \psi(t_n) - (\ell,0) -  \sum_{j=1}^J (Q_j(\cdot/\mu_{j,n}) - Q_j(\infty),0) \right\|_{H \times L^2([0,t_n/2])} \to 0.
\end{equation}
Also, for all $j$, as $\mu_{j,n} \ll t_n$, $\| Q_j(\cdot/\mu_{j,n}) - Q_j(\infty) \|_{H([t_n/2,\infty])} \to 0$.
Combining this with \eqref{eq:out_lc}, \eqref{eq:ss_lc} and \eqref{eq:in_lc} yields
\[ \left\| \vec \psi(t_n) - (\ell,0) -  \sum_{j=1}^J (Q_j(\cdot/\mu_{j,n}) - Q_j(\infty),0)  - \vec \phi_L(t) \right\|_{H \times L^2} \to 0 \quad \text{as} \quad n \to +\infty. \]
This concludes the proof in the global case.

\subsection{Blow up case}

Let $\vec \psi$ be a wave map which blow-up in finite time $T^+(\vec \psi)$. Proposition \ref{prop:reg_part} show that there is a regular wave map $\vec \phi(t)$ defined on a neighborhood of $T^+(\vec \psi)$ such that for $t < T^+(\vec \psi)$ (and $t$ near enough $T^+(\vec \psi)$) there holds
\begin{equation} \label{eq:out_lc2}
\forall r \ge T^+(\vec \psi) -t, \quad \vec\psi(t,r) = \vec \phi(t,r).
\end{equation}
Also $\vec \psi(t,T^+(\vec \psi) -t) \to \ell \in \q V$ and
\begin{equation} \label{eq:phi_in_lc}
\| \vec \phi (t) -(\ell,0) \|_{H \times L^2([0,T^+(\vec \psi) -t])} \to 0 \quad \text{as} \quad t \uparrow T^+(\vec \psi).
\end{equation}
Now Corollary \ref{cor:psi_t->02} provides us with a sequence $t_n \uparrow T^+(\vec \psi)$ such that (we denote $\theta_n = T^+(\vec \psi) - t_n$)
\[ \sup_{s, 0<s\le \theta_n} \frac{1}{s} \int_{t_n - s}^{t_n + s} \int_0^{1-t}  | \partial_t \psi(t,r)|^2 rdrdt \to 0 \quad \text{as} \quad n \to + \infty. \]
Then sequence of wave maps
\[ \vec \psi_n(t,r) := ( \psi( t_n + \theta_n t, \theta_n r ), \theta_n \partial_t \psi(t_n + \theta_n t, \theta_n r), \]
 satisfies the conditions of Theorem \ref{th:profiles} with $R=1$ (and $A=1$) which we can combine with Proposition \ref{prop:Hconv}. Hence there exists harmonic maps $Q_1, \dots , Q_J$ and scales $\lambda_{J,n} \ll \cdots \ll \lambda_{1,n} \ll 1$ such that
\[ \left\| \vec \psi_n(0) - (\ell,0) - \sum_{j=1}^J (Q_j(\cdot/\lambda_{j,n}) - Q_j(\infty),0)  \right\|_{H \times L^2([0,1])} \to 0. \]
Let us unscale, and denote $\mu_{j,n} = \lambda_{j,n}/\theta_n$, and recall \eqref{eq:phi_in_lc} to deduce
\begin{equation} \label{eq:in_lc2}
\left\| \vec \psi(t_n) - \vec \phi(t_n) -  \sum_{j=1}^J (Q_j(\cdot/\mu_{j,n}) - Q_j(\infty),0) \right\|_{H \times L^2([0,\theta_n])} \to 0.
\end{equation}
Hence combining with \eqref{eq:out_lc2}, this yields
\[ \left\| \vec \psi(t_n) - \vec \phi(t_n) -  \sum_{j=1}^J (Q_j(\cdot/\mu_{j,n}) - Q_j(\infty),0) \right\|_{H \times L^2} \to 0 \quad \text{as} \quad n \to +\infty. \]
This settles the proof of Theorem \ref{th1} in the blow-up case.

\appendix

\section{Proof of  Lemma \ref{lem:Qflow}}

\begin{lem}[Energy and $H_\ell$ norm] \label{lem:EH}
Let $\ell \in \q V$, and $\psi$ be a function of finite energy on $[r_1,r_2]$, with $0 \le r_1 < r_2 \le \infty$.
\begin{enumerate}
\item $2 |G(\psi(r_2)) - G(\psi(r_1))| \le E((\psi,0);r_1,r_2)$. 
\item There exists $\delta_\ell>0$ and $C_\ell>0$ independent of $\psi$ such that if
\[ \| \psi - \ell \|_{L^\infty([r_1,r_2])} \le \delta_\ell, \]
then
\[  \frac{1}{C_\ell} \| \psi - \ell \|_{H([r_1,r_2])}^2 \le E((\psi,0);r_1,r_2) \le C_\ell \| \psi - \ell \|_{H([r_1,r_2])}^2. \]
\item If $\psi(r_1)=\ell$, and $E(\psi,0;r_1,r_2) \le \delta_\ell'$ then the hypothesis and the conclusion of (2) above hold.
\end{enumerate}
\end{lem}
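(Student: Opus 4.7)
The three parts can be handled in order, with (1) providing the main inequality that feeds into (3), and (2) a straightforward Taylor expansion.

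For (1), I would start from the identity $G'(x) = |g(x)|$, which gives $\partial_r [G(\psi(r))] = |g(\psi(r))|\, \partial_r \psi(r)$. Integrating on $[r_1, r_2]$ yields
\[ |G(\psi(r_2)) - G(\psi(r_1))| \le \int_{r_1}^{r_2} |g(\psi(r))|\, |\partial_r \psi(r)|\, dr. \]
The elementary inequality $2 \tfrac{|g(\psi)|}{r} |\partial_r \psi| \le \tfrac{g^2(\psi)}{r^2} + |\partial_r \psi|^2$ then shows that the right-hand side is bounded by $\tfrac{1}{2} E((\psi,0); r_1, r_2)$, giving the claim.

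For (2), I will rely on Taylor expansion near $\ell \in \q V$: since $g(\ell) = 0$ and $g \in \q C^3$, there exists $C_0 > 0$ with $|g(x) - g'(\ell)(x - \ell)| \le C_0 (x - \ell)^2$ for $x$ close to $\ell$. Choosing $\delta_\ell$ small enough that $C_0 \delta_\ell \le |g'(\ell)|/2$, one obtains the two-sided pointwise bound
\[ \tfrac{|g'(\ell)|}{2} |\psi - \ell| \le |g(\psi)| \le \tfrac{3|g'(\ell)|}{2} |\psi - \ell| \]
whenever $|\psi - \ell| \le \delta_\ell$. Squaring, dividing by $r^2$, adding $|\partial_r \psi|^2 = |\partial_r(\psi - \ell)|^2$, and integrating against $r\, dr$ produces both inequalities of (2) with a constant $C_\ell$ depending only on $|g'(\ell)|$. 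This implicitly uses $g'(\ell) \ne 0$, which is ensured by the working assumptions (A3) or (A3') of the paper.

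Part (3) is the only subtle step, and constitutes the main obstacle: one must convert a smallness condition on the energy into a pointwise smallness of $\psi - \ell$. Applying (1) on $[r_1, r]$ for arbitrary $r \in [r_1, r_2]$, and using $\psi(r_1) = \ell$, gives
\[ 2 |G(\psi(r)) - G(\ell)| \le E((\psi,0); r_1, r) \le E((\psi,0); r_1, r_2) \le \delta_\ell'. \]
By assumption (A1), $G$ is continuous and $G(x) \to \pm\infty$ as $x \to \pm\infty$; since $G' = |g| \ge 0$ with equality only on the discrete set $\q V$ (by (A2)), $G$ is strictly increasing, hence a homeomorphism $\m R \to \m R$. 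Its inverse is therefore continuous at $G(\ell)$: choosing $\delta_\ell' > 0$ small enough that $|y - G(\ell)| \le \delta_\ell'/2$ forces $|G^{-1}(y) - \ell| \le \delta_\ell$ gives the desired $L^\infty$ bound uniformly in $r$, and (2) then applies to conclude the norm equivalence.
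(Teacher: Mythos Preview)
Your proof is correct and is precisely the elementary argument the paper alludes to: the Bogomolny-type completion of the square for (1), the Taylor expansion of $g$ at $\ell$ for (2), and for (3) the combination of (1) with the fact that $G$ is a homeomorphism under (A1)--(A2). The paper itself does not spell this out but simply refers to \cite{C05} and \cite{CKLS13a}; your write-up fills in exactly what those references provide.
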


\begin{proof}
These bounds are elementary, based on the Taylor expansion of $g$ and $G$ around $\ell$. We refer to \cite{C05} for the first result, and to \cite{CKLS13a} for the second and third ones.
\end{proof}

Our goal is now to prove Lemma \ref{lem:Qflow}. We start by a claim regarding the control of $L^\infty$ norm by $H$ even locally outside 0 or $\infty$.

\begin{lem} \label{lem:HLinfty}
There exists $c>0$ such that for any $0 \le r_1 < r_2 \le +\infty$ with $r_2 \ge 2r_1$, and $\phi \in H([r_1,r_2])$, then $\phi \in  \q C([r_1,r_2])$ and
\[ \| \phi \|_{L^\infty([r_1,r_2])} \le c \| \phi \|_{H([r_1,r_2])}. \]
\end{lem}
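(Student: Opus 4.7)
The plan is to bound $|\phi(r)|^2$ pointwise by $\|\phi\|_H^2$ by combining two ingredients: a Cauchy--Schwarz estimate that uses \emph{both} pieces of the $H$-norm simultaneously, together with a pigeonhole (mean-value) argument on a dyadic sub-interval to furnish a reference point where $\phi$ is already small. The hypothesis $r_2 \ge 2r_1$ is used exactly to guarantee that such a dyadic window exists inside $[r_1,r_2]$.

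First, for $r_1 \le q < r \le r_2$, I would observe via the fundamental theorem of calculus and Cauchy--Schwarz that
\[
|\phi(r)^2 - \phi(q)^2| \;=\; \left|\int_q^r 2\phi(s)\phi'(s)\,ds\right|
\;\le\; 2\left(\int_q^r \frac{|\phi(s)|^2}{s^2}\,s\,ds\right)^{1/2}\!\left(\int_q^r |\phi'(s)|^2\,s\,ds\right)^{1/2}
\;\le\; 2\,\|\phi\|_{H([r_1,r_2])}^2.
\]
The same argument applied to $\phi$ itself yields
\[
|\phi(r)-\phi(q)|^2 \le \Bigl(\ln(r/q)\Bigr)\, \|\phi\|_{H([r_1,r_2])}^2,
\]
which proves that $\phi$ is continuous on $[r_1,r_2]$ (with the usual understanding at the endpoints $0$ or $+\infty$ if present, where the weighted norm already gives the existence of limits, cf.\ the remark before the set $\q V$ is introduced).

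Next, since $r_2 \ge 2 r_1$ one can pick a dyadic interval $[a,2a] \subset [r_1,r_2]$: take $a=r_1$ if $r_1>0$, and otherwise any $a \in (0, r_2/2]$. The mean-value inequality with the measure $ds/s$ provides $q_0 \in [a,2a]$ with
\[
|\phi(q_0)|^2 \;\le\; \frac{1}{\ln 2}\int_a^{2a} |\phi(s)|^2\,\frac{ds}{s}
\;\le\; \frac{1}{\ln 2}\,\|\phi\|_{H([r_1,r_2])}^2.
\]
Combining this reference bound with the Cauchy--Schwarz control of $|\phi(r)^2-\phi(q_0)^2|$ from the first step yields, for every $r \in [r_1,r_2]$,
\[
|\phi(r)|^2 \;\le\; |\phi(q_0)|^2 + 2\,\|\phi\|_{H([r_1,r_2])}^2 \;\le\; \left(2+\tfrac{1}{\ln 2}\right)\|\phi\|_{H([r_1,r_2])}^2,
\]
which gives the claim with $c = \sqrt{2+1/\ln 2}$.

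There is no serious obstacle here; the only subtlety is to avoid the naive estimate $|\phi(r)-\phi(q_0)|^2 \le \ln(r/q_0)\,\|\phi\|_H^2$ (which blows up when $r_2 =+\infty$ or $r_1=0$) by instead estimating $|\phi^2|$ directly, so that both weights $s\,ds$ and $ds/s$ appear and the logarithm disappears. The dyadic hypothesis $r_2 \ge 2r_1$ is sharp in this argument: it ensures a window of length $\ln 2$ inside which the pigeonhole step produces a uniformly small reference value of $|\phi|$.
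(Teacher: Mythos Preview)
Your proof is correct and takes a genuinely different route from the paper's. The paper argues by contradiction/case distinction: it locates the point $r_0$ where $|\phi|$ attains its maximum, assumes $\|\phi\|_{L^\infty} > \|\phi\|_H$, and then shows that $|\phi| \ge \tfrac12 \|\phi\|_{L^\infty}$ on a fixed dyadic neighborhood of $r_0$ (using only the $\sqrt{\ln(r/s)}$ modulus of continuity); integrating $\phi^2/r$ over that neighborhood produces a lower bound on $\|\phi\|_H^2$. Your argument instead works directly: the identity $|\phi(r)^2-\phi(q)^2| \le 2\|\phi\|_H^2$, obtained by splitting $2\phi\phi'$ so that \emph{both} weights $s\,ds$ and $ds/s$ appear, removes the logarithmic loss entirely and reduces the problem to finding a single reference point $q_0$ with $|\phi(q_0)|$ controlled, which your pigeonhole step supplies. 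Your approach is cleaner (no case split, explicit constant $c=\sqrt{2+1/\ln 2}$) and makes transparent why the two halves of the $H$-norm cooperate; the paper's approach is perhaps more pedestrian but has the minor virtue of only using the $\phi^2/r$ part of the norm in the main step.
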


\begin{proof}
We focus on the case $0 < r_1$ and $r_2 <\infty$, as the other cases are well-known and simpler.
We recall that for $r \ge s$,
\[ | \phi(r) - \phi(s) | \le \int_s^r | \partial_r \phi(u) | du \le \sqrt{ \int_s^r | \partial_r \phi(u) |^2 u du } \sqrt{\int_s^r \frac{du}{u}} \le \| \phi \|_{H} \sqrt{\ln \frac{r}{s}}. \]
This proves continuity. 

Then let $r_0 \in [r_1,r_2]$ be such that $\| \phi \|_{L^\infty([r_1,r_2])} = | \phi(r_0)|$. If $\| \phi \|_{L^\infty([r_1,r_2])} \le \| \phi \|_{H([r_1,r_2])}$, there is nothing to prove. Assume the opposite, that is $\| \phi \|_{L^\infty([r_1,r_2])} \ge \| \phi \|_{H([r_1,r_2])}$. 

Then if $r \in [r_1,r_2]$ is such that 
\[ \frac{|r-r_0|}{r_0} \le \frac{1}{4} \le \frac{\| \phi \|_{L^\infty([r_1,r_2])}^2}{4\| \phi \|_{H([r_1,r_2])}^2}, \]
then $\ds \sqrt{\left| \ln \frac{r}{r_0} \right|} \le  \frac{\| \phi \|_{L^\infty([r_1,r_2])}}{2\| \phi \|_{H([r_1,r_2])}}$ and
\[ |\phi(r)| \ge |\phi(r_0)| -  | \phi(r) - \phi(r_0) | \ge | \phi(r_0))| - \| \phi \|_H  \frac{\| \phi \|_{L^\infty}}{2\| \phi \|_{H([r_1,r_2])}} \ge \frac{\| \phi \|_{L^\infty([r_1,r_2])}}{2}. \]
Now as $\ds \frac{r_2}{r_1} \ge 2$, then $\ds \frac{r_0}{r_1}$ or $\ds \frac{r_{2}}{r_0} \ge \sqrt 2$. Let us assume the latter (the former would be treated accordingly), that is
\[ \frac{r_2-r_0}{r_0} \ge \sqrt 2-1 > \frac{1}{4}. \]
Then 
\begin{align*}
 \| \phi \|_{H([r_1,r_2])}^2 & \ge \int_{r_0}^{r_2} \frac{\phi(r)^2}{r} dr \ge \int_{r_0}^{5r_0/4} \frac{\phi(r)^2}{r} dr \ge \frac{1}{4} \| \phi \|_{L^\infty([r_1,r_2])}^2 \int_{r_0}^{5r_0/4} \frac{dr}{r} \\
 & \ge \frac{\ln (5/4)}{4} \| \phi \|_{L^\infty([r_1,r_2])}^2. \qedhere 
 \end{align*}
\end{proof}

\begin{lem}[Extension in $H$] \label{lem:extH}
Let $c$ as in Lemma \ref{lem:HLinfty}, and $0 \le r_1 < r_2 \le +\infty$.

Let $\phi \in H ([r_1, r_2])$, then there exists $\psi \in H$ that extends $\phi$, that is, 
\[ \forall r \in [r_1,r_2], \quad \psi(r) = \phi(r), \]
and $\| \psi \|_H \le \| \phi \|_{H([r_1,r_2])} + 3 \| \phi \|_{L^\infty([r_1,r_2])}$.

If $r_2 \ge 2r_1$, the previous Lemma ensures $\| \psi \|_H \le (3c+1) \| \phi \|_{H([r_1,r_2])}$.
\end{lem}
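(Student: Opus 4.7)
The plan is to construct $\psi$ by gluing to $\phi$ two piecewise-linear caps that bring it down to zero outside $[r_1,r_2]$. First, a continuity argument like the one in the proof of Lemma \ref{lem:HLinfty} shows that $\phi \in H([r_1,r_2])$ admits a continuous representative on $[r_1,r_2]$, so that the boundary values $\phi(r_1)$ (when $r_1>0$) and $\phi(r_2)$ (when $r_2<\infty$) are well defined and satisfy $|\phi(r_i)| \le \|\phi\|_{L^\infty([r_1,r_2])}$. When $r_1=0$ the $H$-integrability forces $\phi$ to vanish at $0$, and symmetrically at $\infty$ if $r_2=\infty$, so no extension is needed on that side.

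Then I would define
\[
\psi(r) = \begin{cases}
\phi(r_1)\, r/r_1 & \text{if } 0 \le r \le r_1, \\
\phi(r) & \text{if } r_1 \le r \le r_2, \\
\phi(r_2)\,(2 - r/r_2) & \text{if } r_2 \le r \le 2r_2, \\
0 & \text{if } r \ge 2r_2,
\end{cases}
\]
with the obvious omissions in the degenerate cases $r_1=0$ or $r_2=\infty$. This $\psi$ is continuous, agrees with $\phi$ on $[r_1,r_2]$, and all the pieces are explicit enough to estimate by direct calculation.

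For the left cap, $|\partial_r \psi|^2 = |\psi|^2/r^2 = |\phi(r_1)|^2/r_1^2$, so
\[
\int_0^{r_1} \left( |\partial_r \psi|^2 + \frac{|\psi|^2}{r^2} \right) r\, dr = 2\,\frac{|\phi(r_1)|^2}{r_1^2}\cdot \frac{r_1^2}{2} = |\phi(r_1)|^2.
\]
For the right cap, $\int_{r_2}^{2r_2} |\partial_r \psi|^2 r\, dr = \tfrac{3}{2}|\phi(r_2)|^2$, while $|\psi|^2/r^2 \le |\phi(r_2)|^2/r^2$ on $[r_2,2r_2]$ gives $\int_{r_2}^{2r_2} |\psi|^2/r\, dr \le (\ln 2)|\phi(r_2)|^2$. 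Summing,
\[
\|\psi\|_H^2 \le \|\phi\|_{H([r_1,r_2])}^2 + \bigl(\tfrac{5}{2} + \ln 2\bigr) \|\phi\|_{L^\infty([r_1,r_2])}^2 \le \|\phi\|_{H([r_1,r_2])}^2 + 4\|\phi\|_{L^\infty([r_1,r_2])}^2,
\]
and the inequality $\sqrt{a^2+b^2} \le a+b$ yields $\|\psi\|_H \le \|\phi\|_{H([r_1,r_2])} + 2\|\phi\|_{L^\infty([r_1,r_2])}$, which is stronger than the claimed bound with constant $3$.

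The final assertion is then immediate: under the assumption $r_2 \ge 2r_1$, Lemma \ref{lem:HLinfty} gives $\|\phi\|_{L^\infty([r_1,r_2])} \le c\,\|\phi\|_{H([r_1,r_2])}$, so $\|\psi\|_H \le (3c+1)\|\phi\|_{H([r_1,r_2])}$. There is no real obstacle in this argument; it is a routine construction, and the only point requiring a bit of care is the interpretation of the boundary values in the degenerate cases $r_1=0$ and $r_2=\infty$.
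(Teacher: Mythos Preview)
Your argument is correct and follows the same piecewise-affine extension idea as the paper. The only difference is in the choice of caps: the paper sets $\psi=0$ on $[0,r_1/2]$ and interpolates affinely on $[r_1/2,r_1]$, whereas you interpolate linearly on all of $[0,r_1]$; your choice is in fact slightly sharper (contributing $|\phi(r_1)|^2$ rather than $(3/2+\ln 2)|\phi(r_1)|^2$), which is why you end up with the constant $2$ instead of $3$.
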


\begin{proof}
Again, we only consider here the case $0 < r_1 < r_2 < +\infty$ and leave the other simpler cases to the reader.
We extend $\phi$ in the following way: let $r_1' = r_1/2$ and $r_2' = 2r_2$ and
\begin{align*}
\psi(r) & = \begin{cases}
0 & \text{if } r \le r_0' \\
\phi(r_1) \frac{r-r_1'}{r_1 -r_1'}  & \text{if } r_1' \le r \le r_1  \text{ (affine extension)}\\
\phi(r) & \text{if } r_1 \le r \le r_2 \\
\phi(r_2)  \frac{r_2' -r}{r_2' -r_2} & \text{if } r_2 \le r \le r_2'  \text{ (affine extension)}\\
0 & \text{if } r_2' \le r 
\end{cases}.
\end{align*}
Then $\psi$ is continuous, hence we deduce $\psi \in H$ and
\begin{align*}
\int_{r_1'}^{r_1} |\partial_r \psi(r)|^2 rdr & = \phi(r_1)^2 \frac{r_1^2 - {r_1'}^2}{2(r_1-r_1')^2}  = \frac{3}{2}  \phi(r_1)^2, \\
\int_{r_1'}^{r_1} \frac{\psi(r)^2}{r^2} rdr & \le \phi(r_1)^2 \int_{r_1'}^{r_1} \frac{dr}{r} \le  (\ln 2) \phi(r_1)^2, \\
\int_{r_2}^{r_2'} |\partial_r \psi(r)|^2 rdr & = \frac{3}{2} \phi(r_2)^2 , \\
\int_{r_2}^{r_2'} \frac{\psi(r)^2}{r^2} rdr & \le (\ln 2) \phi(r_2)^2.
\end{align*}
From this, we see that 
\[ \| \psi \|_{H([0,r_1])} + \| \psi \|_{H([r_2,+\infty))} \le 3 \| \phi \|_{L^\infty([r_1,r_2])}, \]
hence,  if we combine it with the previous Lemma when $r_2 \ge 2r_1$, we get
\[  \| \psi \|_{H} \le \| \phi \|_{H([r_1,r_2])} + 3 \| \phi \|_{L^\infty([r_1,r_2])} \le (3c+1) \| \phi \|_{H([r_1,r_2])}. \qedhere \]
\end{proof}

\begin{proof}[Proof of Lemma \ref{lem:Qflow}]
The proof relies on \cite[Corollary 2.4]{C05}, which we recall for the convenience of the reader:

\begin{lem}[{\cite[Corollary 2.4]{C05}}] \label{lem:Qflow1}
Let $T \ge 0$ and $\e>0$. There exist $\eta>0$ such that if the wave map $\vec \psi$ satisfies
\[ \| \vec \psi(0) - (Q,0) \|_{H \times L^2} \le \eta, \]
then $T^+(\vec \psi) > T$ and 
\[ \forall t \in [0,T], \quad \| \vec \psi(t) - (Q,0) \|_{H \times L^2} \le \e. \]
\end{lem}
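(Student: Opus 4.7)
My plan is to view Lemma~\ref{lem:Qflow1} as a standard stability/continuous dependence result around the stationary solution $(Q,0)$, and to prove it by combining the Shatah--Tahvildar-Zadeh local Cauchy theory with a Gronwall-type iteration. First I reduce to estimating the perturbation $w := \psi - Q$. Since $(Q,0)$ is a stationary finite-energy solution of \eqref{wm} (the harmonic map equation reads $-\Delta Q + f(Q)/r^2 = 0$), the vector $\vec w = (w,\partial_t w)$ satisfies
\[ \partial_{tt} w - \Delta w + \frac{f(Q+w) - f(Q)}{r^2} = 0, \qquad \| \vec w(0) \|_{H \times L^2} \le \eta. \]
Taylor expanding, $f(Q+w) - f(Q) = f'(Q)w + R(Q,w)w^2$ with $|R(Q,w)|$ uniformly bounded since $Q$ is bounded and $f \in \q C^2$.

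The first and main step is a short-time, \emph{uniform in $\eta$} perturbation estimate: there exist $\tau = \tau(Q) > 0$ and $K = K(Q,\tau)$ such that for $\eta$ small enough, $\vec \psi$ is defined on $[0,\tau]$ and
\[ \sup_{t \in [0,\tau]} \| \vec \psi(t) - (Q,0) \|_{H \times L^2} \le K \eta. \]
This is produced by running the contraction argument of \cite{STZ94} around the smooth stationary reference $(Q,0)$: since $Q$ is time-independent its Strichartz norms on $[0,\tau]$ are trivially under control, and the standard fixed-point setup yields Lipschitz dependence on the initial data with a Lipschitz time depending only on $Q$, not on $\eta$.

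The second step is a direct iteration: cover $[0,T]$ by $N := \lceil T/\tau \rceil$ consecutive subintervals of length $\tau$. Restart the previous estimate at each $t_k := k \tau$, using $\vec \psi(t_k)$ as new initial data and the fact that it is still a small perturbation of $(Q,0)$. This produces the recursion $\eta_{k+1} \le K \eta_k$, hence $\sup_{t \in [0,T]} \| \vec \psi(t) - (Q,0) \|_{H \times L^2} \le K^N \eta$. Choosing $\eta := K^{-N} \e$ at the outset yields the lemma.

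The main obstacle is the first step. Because \eqref{wm} is energy-critical in the 2D radial setting, Sobolev embedding fails at $H \hookrightarrow L^\infty$, so one \emph{cannot} close a pure energy estimate on $\vec w$: testing the equation against $\partial_t w$ produces, aside from a harmless linear term controlled by Hardy's inequality and boundedness of $f'(Q)$, a cubic remainder of the form $\int R(Q,w) w^2 \partial_t w / r^2 \cdot r \, dr$ whose natural bound involves $\|w\|_{L^\infty}$ and is not controlled by $\| \vec w \|_{H \times L^2}$ alone. The resolution, carried out in detail in \cite[Corollary~2.4]{C05}, is to run the contraction in a mixed $L^\infty_t(H \times L^2) \cap S_\ell$ norm with the Strichartz space $S_\ell$ of \eqref{def:S}, exploiting the energy-subcritical character of the perturbation around a \emph{fixed} smooth $Q$ to obtain a lifespan and Lipschitz constant depending only on $Q$.
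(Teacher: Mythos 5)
The paper does not actually prove Lemma \ref{lem:Qflow1}; it is stated purely as recalled from \cite[Corollary 2.4]{C05}, so there is no internal argument to compare yours against. Your outline --- a short-time Lipschitz-dependence estimate from the contraction-mapping local theory around the fixed smooth reference $(Q,0)$, followed by concatenation over $N\approx T/\tau$ subintervals to reach any fixed $T$ at the price of choosing $\eta\sim K^{-N}\e$ --- is exactly the standard mechanism behind such a continuity-of-the-flow statement, and it is consistent with what the cited reference does.

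There is, however, a concrete error in your description of the obstacle. You assert that the embedding $H\hookrightarrow L^\infty$ fails and that $\|w\|_{L^\infty}$ ``is not controlled by $\|\vec w\|_{H\times L^2}$ alone''. This is false in the corotational energy space used here: the norm $\|w\|_H^2=\int\bigl(|\partial_r w|^2+w^2/r^2\bigr)\,r\,dr$ contains the Hardy term, and for any $r$,
\[
|w(r)|^2=\Bigl|\int_r^\infty \partial_{r'}\bigl(w(r')^2\bigr)\,dr'\Bigr|\le \int_0^\infty\Bigl(\frac{w^2}{r'}+|\partial_{r'}w|^2 r'\Bigr)\,dr'=\|w\|_H^2,
\]
so $\|w\|_{L^\infty}\le\|w\|_H$; this is precisely the content of the paper's own Lemma~\ref{lem:HLinfty} applied with $r_1=0$, $r_2=\infty$. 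You appear to be conflating $H$ with the ordinary radial $\dot H^1(\m R^2)$, which indeed fails to embed into $L^\infty$, but the Hardy term changes this. Consequently the cubic remainder in your energy computation \emph{is} controlled pointwise in time by $\|\vec w\|_{H\times L^2}^3$, and the reason one nevertheless runs the argument through the Cauchy theory in a mixed $L^\infty_t(H\times L^2)\cap S_\ell$ space is different: the energy estimate presupposes a solution to differentiate (so existence and persistence of regularity from local well-posedness are needed first), and the linearized energy $\int\bigl(|\partial_t w|^2+|\partial_r w|^2+f'(Q)w^2/r^2\bigr)\,r\,dr$ is not coercive since $f'(Q)$ need not have a sign. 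Your conclusion is sound; the specific justification you gave for it is not.
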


Let $\delta = \eta/(3c+2)$, where $\eta$ is given by Lemma \ref{lem:Qflow1} and $c$ is as in Lemmas \ref{lem:HLinfty} and \ref{lem:extH}.
 
If we consider the restriction of $\psi(0) - Q$ to $[r_1,r_2]$, Lemma \ref{lem:extH} yields an initial data $\phi_0$ such that $\phi_0 - Q \in H$,  
\[  \forall r \in [r_1,r_2], \quad \phi_0(r) = \psi(0,r), \]
 and
\[ \| \phi_0(0) - Q \|_H \le C \| \psi_0(0) - Q \|_{H([r_1,r_2])} \le (3c+1) \delta. \]
Define
\[ \phi_1(0,r) = \begin{cases}
\partial_t \psi(0,r) & \text{if } r_1 \le r \le r_2, \\
0 & \text{otherwise,}
\end{cases}
\]
(observe that $\| \phi_1 \|_{L^2} \le \delta$). Let $\vec \phi$ be the wave map with initial data $(\phi_0, \phi_1)$: it has finite energy, coincide with $\vec \psi$ on $[r_1,r_2]$ at time $t=0$ and
\[ \| \vec \phi(0) - (Q,0) \|_{H \times L^2} \le (3c+2) \delta \le \eta. \]
Hence by Lemma \ref{lem:Qflow1}, $\vec \phi$ is defined at least up to time $T$ and if furthermore $|t| \le T$,
\[  \| \vec \phi(t) - (Q,0 ) \|_{H \times L^2([r_1+|t|,r_2-|t|])} \le \| \vec \phi(t) - (Q,0 ) \|_{H \times L^2} \le \e. \]
Also, by finite speed of propagation for any $t \in (T^-(\vec \psi), T^+(\vec \psi)$ such that $|t| \le (r_2-r_1)/2$,
\[ \forall r \in [r_1 + |t|, r_2 -|t|], \quad  \vec \phi(t,r) = \vec \psi(t,r). \]
Combining the last two properties proves Lemma \ref{lem:Qflow}.
\end{proof}

\small


\bigskip
\bigskip

\textsc{Raphaël Côte}\\
CNRS and École Polytechnique \\
Centre de Mathématiques Laurent Schwartz UMR 7640 \\
Route de Palaiseau, 91128 Palaiseau cedex, France \\
\email{cote@math.polytechnique.fr}

\end{document}